\documentclass{amsart}
\usepackage[utf8]{inputenc}
\usepackage{amsfonts,bbm, soul}
\usepackage{hyperref}
\hypersetup{
pdftitle={(SP)-inclusions of C*-algebras},
pdfsubject={Mathematics, Operator Algebras, C*-algebras},
pdfauthor={Gow, Alec},
pdfkeywords={C*-algebras, inclusions of C*-algebras, 46L05}
}
\usepackage{amsmath}
\usepackage{xcolor}
\usepackage{amsthm}
\usepackage{pdflscape}
\usepackage{leftindex}
\usepackage{mathrsfs}
\usepackage{mathtools}
\usepackage{enumerate}
\usepackage{xcolor}
\usepackage{tikz, tikz-cd}
\usepackage{standalone}

\usepackage{latexsym,amsfonts}
\usepackage{amsbsy}
\usepackage{amssymb,amsmath}
\usepackage{amscd}
\usepackage{mathrsfs}
\usepackage{physics}

\newtheorem{mainthm}{Theorem}

\newtheorem{thm}{Theorem}[section]
\newtheorem{cor}[thm]{Corollary}
\newtheorem{prop}[thm]{Proposition}
\newtheorem{lem}[thm]{Lemma}

\newtheorem{quest}[thm]{Question}

\theoremstyle{definition}
\newtheorem{maindefn}[mainthm]{Definition}
\newtheorem{defn}[thm]{Definition}

\newtheorem{exmp}[thm]{Example}

\newtheorem{notn}[thm]{Notation}

\theoremstyle{remark}
\newtheorem{rem}[thm]{Remark}
\newtheorem{rems}[thm]{Remarks}

\newcommand{\F}{\mathbb{F} }
\newcommand{\Z}{\mathbb{Z} }
\newcommand{\G}{\mathbb{G} }

\DeclareMathOperator{\Proj}{Proj}

\makeatletter
\let\c@equation\c@thm
\makeatother
\numberwithin{equation}{section}

\allowdisplaybreaks

\begin{document}
\title[Property (SP) and inclusions of C*-algebras]{Property (SP) and inclusions of C*-algebras}

\author{Alec Gow}
\address{Department of Pure Mathematics and Institute for Quantum Computing, University of Waterloo, 200 University Ave W, Waterloo, Ontario, Canada, N2L 3G1}
\email{a2gow@uwaterloo.ca}

\author{Roberto Hern{\'a}ndez Palomares}
\address{Department of Mathematics, The Ohio State University, 100 Math Tower 231 West 18th Avenue Columbus, OH, United States of America, 43210-1174}
\email{hernandezpalomares.1@osu.edu}

\begin{abstract}
 As a generalization of Gabe and Neagu's inclusions of real rank zero, we introduce a notion of Property (SP) for inclusions of C*-algebras. We begin the systematic study of \emph{SP-inclusions}, establishing a topological description in the commutative setting and permanence under many constructions. 
 We also extend known results about the permanence of Property (SP) for C*-algebras under classical symmetries to the setting of quantum symmetries; we show that Property (SP) for (simple, separable) C*-algebras is preserved under irreducible C*-discrete inclusions.
 Finally, we resolve (in the negative) a question raised by Gabe and Neagu about inclusions coming from Furstenburg boundaries. 
 Namely, we show the existence of a countable, discrete, non-amenable group $G$ such that $C_r^*(G) \subseteq C(\partial_F G) \rtimes_r G$ is not an SP-inclusion (and therefore not an inclusion of real rank zero).
\end{abstract}
\maketitle
\vspace{-1em}
\section{Introduction}\label{sec:Intro}
\renewcommand*{\themainthm}{\Alph{mainthm}}

Inclusions of operator algebras have been major objects of interest for nearly 50 years. 
For von Neumann algebras, Jones' \cite{MR696688} study of inclusions of type-$\rm{II}_1$ factors gave rise to the modern theory of subfactors, which has found far reaching applications across various areas of mathematics and physics (see, e.g., \cite{MR1134131}, \cite{MR4642115}, and references therein).
Inclusions of C*-algebras have also been studied from various standpoints that often benefit from the advances on the von Neumann side. 
For instance, Pimsner and Popa \cite{MR860811} generalized aspects of Jones' \emph{index}--a relative dimension valued in $\{4\cos^2\pi/n\}_{n\geq 3}\cup[4,\infty)$--suitable for an analytic study of C*-inclusions and Watatani \cite{MR996807} developed an algebraic approach compatible with a module theoretic toolkit.

The study of C*-inclusions has experienced rapid growth since, producing deep interactions between index theory and the internal structure of C*-algebras \cite{MR1742862, MR1228532, MR1604162, MR1642530, MR1900138, MR2085108, MR3145747, MR4599249, MR4717816, MR5069553}, along with a Galois theory of C*-dynamical systems \cite{MR4010423, MR4368671, MR4813137, HPN2}. 
These developments revealed that inclusions of operator algebras \emph{have quantum symmetry} and this is best attested to by \emph{discrete} inclusions.

\emph{Discrete subfactors} were introduced by Izumi, Longo and Popa \cite{MR1622812} as a generalization of the subfactors arising from crossed products by discrete groups and those with finite Jones index. 
Later, C. Jones and Penneys \cite{MR3948170} developed a \emph{unitary tensor category} (UTC) description, showing discrete subfactors form the largest class of subfactors determined by a \emph{standard invariant} axiomatized as an action of a UTC together with certain algebraic data--similar to the Tannaka-Krein reconstruction for compact quantum groups \cite{MR943923, MR3204665}.

The class of \emph{C*-discrete inclusions}, introduced by the second-name author and Nelson in \cite{MR5051789}, is a vast generalization of discrete subfactors, forming the largest class of inclusions of C*-algebras determined by a standard invariant.
These inclusions arise canonically as a reduced crossed product of a C*-algebra acted on by a UTC; consequently, inclusions of finite Watatani index, crossed products by discrete (quantum) groups, and certain endomorphic crossed products are all C*-discrete \cite{MR2032486, MR1966826, HHP25, MR5042209, HPN2}.
Therefore, C*-discrete inclusions provide a broad framework to study properties of C*-algebras that persist under dynamical processes dominated by a (quantum) symmetry. 

Another major theme has been the study of an operator algebra through the (partially ordered) set of its projections. 
This perspective dates back to the foundational work of Murray and von Neumann, in which projections were used to develop the noncommutative theory of dimension and to classify factors \cite{MR1503275}, \cite{MR1501899}. In the 1970's, Elliott used the rich structure of projections in AF-algebras to build a complete invariant \cite{MR397420}, thereby initiating the modern K-theoretic approach to classification of C*-algebras (see e.g., the survey \cite{2023arXiv231114238G} and references therein).  
As such, the families of C*-algebras containing ``many'' projections are of great interest.

One of the weakest regularity conditions that can be placed on a C*-algebra to guarantee the existence of an abundance of nonzero projections is \emph{Property (SP)}, which was introduced by name in \cite{BlackadarNotes}.  
A C*-algebra is said to have Property (SP) if each of its nonzero hereditary subalgebras contains a nonzero projection. 
It follows that well-known classes of C*-algebras, like von Neumann algebras, AW$^*$-algebras, and AF-algebras all have Property (SP) \cite{MR1292013}.

From the point of view of classification theory, Property (SP) is too weak to be useful--at least when considered in the absence of additional properties. %
As such, the study of C*-algebras with Property (SP) has been somewhat neglected when compared to the study of their close cousins, the C*-algebras with \emph{real rank zero}. The latter has been characterized in multiple ways \cite{MR1120918}; one such characterization is particularly nice for our purposes, and so we take it to be our definition. Namely, we say that a C*-algebra has real rank zero if each nonzero hereditary subalgebra contains an approximate identity consisting of projections. 
It is clear from the definition that C*-algebras with real rank zero have Property (SP). 

In this paper, we study inclusions of C*-algebras and Property (SP) from two perspectives.
\begin{enumerate}
    \item We develop a relative notion of Property (SP) for inclusions of C*-algebras.
    \item We establish the permanence of Property (SP) for C*-algebras under inclusions.
\end{enumerate}

For the first perspective, we draw inspiration from Gabe and Neagu \cite{GN25} who recently introduced a notion of real rank zero for inclusions of C*-algebras; we introduce a corresponding notion of Property (SP) for inclusions of C*-algebras.

\begin{maindefn}[Definition \ref{defn:relativeSP}] \label{maindefn:relativeSP}
     We say that an inclusion of C*-algebras $A \subseteq B$ is an \emph{SP-inclusion} if for any nonzero element $a \in A_+$ the hereditary C*-subalgebra $\overline{aBa}$ of $B$ contains a nonzero projection.
\end{maindefn}

Our first aim is to justify that this is indeed the correct notion of Property (SP) for inclusions. 
The following result can reasonably be interpreted as such.
\begin{mainthm}[Theorem \ref{thm:mainSP}]\label{mainthm:mainSP}
Let $A\subseteq B$ be an inclusion of C*-algebras. If there exists an intermediate C*-algebra with Property (SP), then $A\subseteq B$ is an SP-inclusion. In particular, if either $A$ or $B$ has Property (SP), then  $A\subseteq B$ is an SP-inclusion. Moreover, a C*-algebra $C$ has Property (SP) if and only if $C\subseteq C$ is an SP-inclusion.
\end{mainthm}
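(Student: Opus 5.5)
The plan is to prove the first assertion directly and then read off the other two as special cases.

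Let $A\subseteq C\subseteq B$ with $C$ having Property (SP), and fix a nonzero $a\in A_+$. Since $a\in C_+$, the hereditary C*-subalgebra $\overline{aCa}$ of $C$ is nonzero: it contains $a^3=a\cdot a\cdot a\neq 0$. By Property (SP) for $C$, there is a nonzero projection $p\in\overline{aCa}$. The inclusion $C\subseteq B$ gives $aCa\subseteq aBa$, and taking norm closures yields $\overline{aCa}\subseteq\overline{aBa}$. Hence $p$ is a nonzero projection in $\overline{aBa}$, so $A\subseteq B$ is an SP-inclusion.

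For the ``in particular'' statement, take $C=A$ or $C=B$, both of which are intermediate C*-algebras.

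For the final equivalence, one direction is the case $A=B=C$ of what was just shown. For the converse, suppose $C\subseteq C$ is an SP-inclusion and let $D\subseteq C$ be a nonzero hereditary C*-subalgebra. Pick a nonzero $d\in D_+$, which exists because $D$ is spanned by its positive elements. By hypothesis $\overline{dCd}$ contains a nonzero projection. Since $D$ is hereditary, $dCd\subseteq D$: for $c\in C$, the element $dcd$ lies in $DCD\subseteq D$. Hence $\overline{dCd}\subseteq D$, and $D$ contains a nonzero projection.

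The only point needing care is the standard hereditary fact $DCD\subseteq D$, for instance via Murphy's characterization of hereditary subalgebras. I expect no real obstacle: the argument is a formal monotonicity of the sets $\overline{aXa}$ in $X$, together with the fact that every nonzero hereditary subalgebra contains one of the form $\overline{dCd}$.
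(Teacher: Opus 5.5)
Your proof is correct. For the first two assertions it is the paper's argument: $\overline{aCa}\subseteq\overline{aBa}$ carries the projection over, and taking $C=A$ or $C=B$ gives the special cases.

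The only divergence is the converse in the final equivalence. The paper deduces it from Proposition~\ref{prop:equivalentInclusion} and Lemma~\ref{lem:equivalentSP}. The SP-inclusion $C\subseteq C$ gives, for every nonzero $a\in C_+$, a nonzero projection $p\in C$ and $\lambda>0$ with $a\geq\lambda p$. Then for a hereditary $D$ and $0\neq b\in D_+$, the inequality $0\leq p\leq\lambda^{-1}b$ forces $p\in D$. The step (i)$\Rightarrow$(ii) of Proposition~\ref{prop:equivalentInclusion} needs the bidual: one identifies the support projection of $(a-\varepsilon)_+$ in $C^{**}$ with the open projection $\mathbf{1}_{(\varepsilon,\infty)}(a)$. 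Your argument avoids $C^{**}$ entirely: you pick $0\neq d\in D_+$ and note $\overline{dCd}\subseteq D$ because $DCD\subseteq D$. That makes it more elementary and self-contained. It is essentially the reasoning of the Remark following Lemma~\ref{lem:equivalentSP}, which reduces Property (SP) to hereditary subalgebras generated by single positive elements. The paper's route only reuses the order-theoretic characterization it had already set up; for this statement that detour is unnecessary.
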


We note that an inclusion of C*-algebras may be an SP-inclusion in the case where both, one, or neither of the algebras in the inclusion have Property (SP) as illustrated by Examples \ref{exmp:both}, \ref{exmp:irrationalRotation} and \ref{exmp:commutative}, respectively. 
Furthermore, we characterize SP-inclusions in topological terms when both of the algebras involved are commutative in Proposition \ref{prop:TopologicalSPInclusion} (cf. \cite[Theorem C]{GN25}). 

Through the systematic development of the theory of SP-inclusions, which are a strict generalization of inclusions of real rank zero (see Proposition \ref{prop:RR0}), we are able to give a negative answer to a question raised in \cite[Remark 5.10]{GN25}.  

\begin{mainthm}[Theorem \ref{thm:Furstenberg}]\label{mainthm:Furstenberg}
     Let $N$ be a discrete group with trivial amenable radical and let $H \neq \{e\}$ be a  discrete torsion-free abelian group. If $G:= N \times H$, then the canonical embedding $C_r^*(G) \subseteq C(\partial_FG) \rtimes_r G$ is not an SP-inclusion. In particular, if $G$ is a countable, discrete, non-amenable group, then the embedding $C_r^*(G) \subseteq C(\partial_F G) \rtimes_r G$ need not be an SP-inclusion (and therefore not an inclusion of real rank zero). 
\end{mainthm}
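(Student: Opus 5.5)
The plan is to show that the amenable radical of $G$ acts trivially on $\partial_F G$. This splits $C(\partial_F G)\rtimes_r G$ as a tensor product with the commutative algebra $C_r^*(H)\cong C(\widehat H)$. Since $H$ is torsion-free, $\widehat H$ is connected, and a connectedness argument then rules out nonzero projections in a suitable hereditary subalgebra.

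\emph{Step 1: $H$ acts trivially on $\partial_F G$.} I will identify the amenable radical $R(G)$ with $H$.
\begin{enumerate}
\item $H$ is an abelian normal subgroup, so $H\subseteq R(G)$.
\item The image of $R(G)$ under the projection $G\to N$ is a normal amenable subgroup of $N$, hence trivial. Thus $R(G)\subseteq H$.
\item By the theorem of Breuillard--Kalantar--Kennedy--Ozawa (and Furman), the kernel of the action $G\curvearrowright\partial_F G$ is exactly $R(G)$.
\end{enumerate}
So $H$ acts trivially on $X:=\partial_F G$, and the action factors through $N$.

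\emph{Step 2: tensor decomposition.} Because $H$ acts trivially and $G=N\times H$, there is a canonical isomorphism
\[
C(X)\rtimes_r G\;\cong\;\bigl(C(X)\rtimes_r N\bigr)\otimes C_r^*(H)\;\cong\;C(\widehat H, D),\qquad D:=C(X)\rtimes_r N.
\]
I would check this on the regular representation on $\ell^2(G)\otimes L^2(X,\mu)$, or via the universal property combined with identifying both reduced norms through the conditional expectations onto $C(X)$. Under this isomorphism $u_h\mapsto 1\otimes\lambda_h$. Hence $C_r^*(H)\subseteq C_r^*(G)$ corresponds to the scalar-valued functions $C(\widehat H)\cdot 1_D$.

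\emph{Step 3: choice of $a$.} Since $H\neq\{e\}$ is torsion-free, $\widehat H$ is a nontrivial compact connected abelian group; connectedness holds because the Pontryagin dual of a torsion-free discrete abelian group is connected. Pick a nonzero $f\in C(\widehat H)_+$ with $f(\chi_0)=0$ for some $\chi_0$, and let $a\in C_r^*(H)_+\subseteq C_r^*(G)_+$ correspond to $f$.

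\emph{Step 4: no nonzero projections in $\overline{aBa}$.} Every element of $\overline{aBa}\subseteq C(\widehat H,D)$ vanishes on the zero set of $f$. Let $p$ be a projection in $\overline{aBa}$.
\begin{enumerate}
\item Each $p(\chi)$ is a projection in $D$, so $\chi\mapsto\|p(\chi)\|$ is continuous with values in $\{0,1\}$.
\item By connectedness of $\widehat H$ this function is constant.
\item It equals $0$ at $\chi_0$, so $p=0$.
\end{enumerate}
Hence $C_r^*(G)\subseteq C(\partial_F G)\rtimes_r G$ is not an SP-inclusion.

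For the final assertion, take $N=\mathbb{F}_2$, which is non-amenable with trivial amenable radical, and $H=\mathbb{Z}$. Then $G=\mathbb{F}_2\times\mathbb{Z}$ is countable, discrete and non-amenable. Failure of real rank zero follows from Proposition~\ref{prop:RR0}, since inclusions of real rank zero are SP-inclusions.

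I expect the main obstacle to be Step 2: making the tensor splitting rigorous at the level of reduced crossed products and verifying that the reduced norms match. Step 1 rests on citing the kernel theorem for the Furstenberg boundary action, so it needs a correct reference rather than new work.
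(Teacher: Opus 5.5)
Your proposal is correct and takes essentially the same route as the paper. The amenable radical $\{e\}\times H$ acts trivially on $\partial_F G$, so $C(\partial_F G)\rtimes_r G$ splits as $D\otimes C(\widehat H)$ with $C_r^*(H)$ sitting inside as $1_D\otimes C(\widehat H)$, and connectedness of $\widehat H$ then rules out nonzero projections in $\overline{aBa}$ for $a=1\otimes f$. Your final step is slightly cleaner than the paper's: you use that $\chi\mapsto\|p(\chi)\|$ is continuous and $\{0,1\}$-valued on all of $\widehat H$ and vanishes at a zero of $f$, whereas the paper uses connectedness of the support $U$, which need not be connected for a general $\widehat H$.
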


Working from the second perspective, we draw on classical results about the permanence of Property (SP) for C*-algebras, as studied in e.g., \cite{MR1623278} and \cite{MR1862184}.
Our first result along these lines is an extension of the work of Osaka \cite{MR1862184} that upgrades permanence of Property (SP) for C*-algebras (under certain ``nice'' faithful conditional expectations) to the setting of SP-inclusions.

\begin{mainthm}[Theorem \ref{thm:pinchingMain}]\label{mainthm:pinchingMain}
     Let $A \subseteq B$ be an inclusion of C*-algebras with faithful conditional expectation $E:B\twoheadrightarrow A$. 
     Suppose that $E$ has the pinching property. 
     Then $A \subseteq B$ is an SP-inclusion if and only if $B$ has Property (SP). 
\end{mainthm}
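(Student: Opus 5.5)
The direction ``$B$ has Property (SP) $\Rightarrow$ $A\subseteq B$ is an SP-inclusion'' is immediate from Theorem \ref{mainthm:mainSP}: an inclusion is an SP-inclusion as soon as one of the two algebras has Property (SP). So the work is in the converse. Assume $A\subseteq B$ is an SP-inclusion and let $D\subseteq B$ be a nonzero hereditary C*-subalgebra. Fix $b\in D_+$ with $\|b\|=1$. It then suffices to produce a nonzero projection in $\overline{bBb}\subseteq D$. The plan is to use the pinching property to move $b$ into $A$ up to a small error. We then apply the SP-inclusion hypothesis to the resulting positive element of $A$, and finally transport the projection back into $\overline{bBb}$.

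\emph{Step 1: pinching.} Since $E$ is faithful, $E(b)\neq 0$. I will use the pinching property in Osaka's form, namely: for every $\varepsilon>0$ there is a contraction $x\in A$ such that
\[
\|x^*(b-E(b))x\|<\varepsilon \quad\text{and}\quad \|x^*E(b)x\|\geq \|E(b)\|-\varepsilon .
\]
Set $a:=x^*E(b)x\in A_+$ and $c:=x^*bx\in \overline{x^*bBbx}$, so that $\|a-c\|<\varepsilon$. I will choose $\varepsilon<\|E(b)\|/3$ and $\delta$ with $\varepsilon<\delta<\|E(b)\|-\varepsilon$. Then $\|a\|>\delta$, so $(a-\delta)_+$ is a nonzero element of $A_+$.

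\emph{Step 2: Cuntz comparison and the SP-inclusion hypothesis.} Because $\|a-c\|<\delta$, the Kirchberg--R{\o}rdam lemma provides a contraction $d\in B$ (in fact in $\widetilde B$, which suffices) with $(a-\delta)_+=d^*cd=d^*x^*bxd$. Set $w:=b^{1/2}xd\in B$. Then $w^*w=(a-\delta)_+$ and $ww^*\in\overline{bBb}$. The SP-inclusion hypothesis, applied to $0\neq(a-\delta)_+\in A_+$, gives a nonzero projection $p\in\overline{(a-\delta)_+B(a-\delta)_+}=\overline{w^*wBw^*w}$.

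\emph{Step 3: transport.} For any element $w$ of a C*-algebra, the hereditary subalgebras $\overline{w^*wBw^*w}$ and $\overline{ww^*Bww^*}$ are $*$-isomorphic. Concretely, write the polar decomposition $w=v|w|$ in $B^{**}$. Then $y\mapsto vyv^*$ maps the first algebra onto the second, and it is a $*$-isomorphism. Hence $vpv^*$ is a nonzero projection in $\overline{ww^*Bww^*}\subseteq\overline{bBb}\subseteq D$. Since $D$ was an arbitrary nonzero hereditary subalgebra, $B$ has Property (SP).

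\emph{Main obstacle.} I expect the main obstacle to be Step 1. The precise formulation of the pinching property in the paper matters here: which algebra $x$ lives in, and whether it is normalized. If $x$ is only guaranteed to lie in $B$ rather than $A$, then $a$ is no longer in $A$. In that case I would instead use $E(x^*bx)$, or replace $a$ by $x^*E(b)x$ and argue via $E$-invariance. If necessary I would rescale $x$ so that the estimates still yield a nonzero cut-down $(a-\delta)_+$. Steps 2 and 3 are standard Cuntz-comparison manipulations.
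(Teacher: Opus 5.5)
Your proof is correct and is essentially the paper's argument: Theorem \ref{thm:mainSP} gives one direction, and for the other you pinch, apply the SP-inclusion hypothesis to the cut-down $(x^*E(b)x-\delta)_+\in A_+$, and transport the projection into $\overline{bBb}$; the paper does this transport in two steps (Pasnicu--Phillips Lemma 4.3, then the $z^*z$/$zz^*$ isomorphism with $z=b^{1/2}x$ in your notation), while you unpack the former via Kirchberg--R{\o}rdam and merge both steps into the single element $w=b^{1/2}xd$. The paper's pinching property only provides some element of $A$ with no normalization, but your argument never uses $\|x\|\le 1$, so the concern in your ``main obstacle'' paragraph does not arise.
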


We are also able to show permanence of Property (SP) under a much broader class of inclusions, namely the C*-discrete inclusions introduced in \cite{MR5051789}. 

\begin{mainthm}[Theorem \ref{thm:discreteSP}]\label{mainthm:discreteSP}
     Let $A \subseteq B$ be an irreducible C*-discrete inclusion (by definition, this is equipped with a faithful conditional expectation $E: B \twoheadrightarrow A$). If $A$ is simple, separable, and has Property (SP), then $B$ has Property (SP).
\end{mainthm}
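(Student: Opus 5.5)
The plan is to show directly that every nonzero hereditary C*-subalgebra of $B$ contains a nonzero projection. Fix a nonzero $b\in B_+$. The aim is to compress $b$ into a corner where it is dominated, up to a small error, by a positive element of $A$. Property (SP) of $A$ then supplies a projection, and this projection is transported into $\overline{bBb}$ by a partial isometry in $B$. The tools are the structure of C*-discrete inclusions from \cite{MR5051789}: $B$ is the reduced crossed product of $A$ by an action of a UTC, the algebraic part (spanned by finitely supported Fourier expansions over the irreducible objects) is dense, and, by irreducibility ($A'\cap M(B)=\mathbb{C}$), the conditional expectation $E$ is the unique one and is faithful.

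Normalize $\|E(b)\|=1$. This is possible because $E$ is faithful, so $E(b)\neq 0$. Approximate $b^{1/2}$ by an element $x$ of the algebraic part, so that $x^*x$ is close to $b$. Write $x^*x=E(x^*x)+y$, where $y$ is a finite sum of off-diagonal Fourier components indexed by nontrivial irreducible objects. The key step is a Kishimoto-type lemma for C*-discrete inclusions. Given a nonzero $a_0\in A_+$ and finitely many off-diagonal components $y$, and given $\varepsilon>0$, we want a positive contraction $c\in\overline{a_0Aa_0}$ with $\|c\,y\,c\|<\varepsilon$ and $\|c\,a_0\,c\|\ge\|a_0\|-\varepsilon$. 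This is where simplicity and irreducibility, meaning the outerness of the UTC action, are used. The nontrivial irreducible bimodules should not be inner on any hereditary subalgebra, so they can be pinched away. Separability ensures that $A$ can be handled via a strictly positive element and that these bimodules behave like properly outer automorphisms. Since the paper has already proved Theorem \ref{thm:pinchingMain}, the cleanest route is to establish that $E$ has the pinching property for irreducible C*-discrete inclusions with $A$ simple and separable. I would prove this lemma first, by reduction to the Izumi/Kishimoto outerness argument for the inner-product bimodule $\mathbb{E}(\pi)$ of each irreducible $\pi$.

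Granting the pinching property, apply Theorem \ref{thm:pinchingMain}. It says $B$ has (SP) if and only if $A\subseteq B$ is an SP-inclusion. The latter holds by Theorem \ref{mainthm:mainSP}, because $A$ itself has Property (SP). For completeness, I would also sketch the direct mechanism. With $a_0=E(x^*x)$, the lemma gives $c$ such that $c\,x^*x\,c$ is within $\varepsilon$ of the nonzero element $c\,a_0\,c\in A_+$. Functional calculus, i.e. cutting down by $(c a_0 c-\delta)_+$ together with (SP) in $A$, produces a nonzero projection $p\in A$ with $p\lesssim c\,x^*x\,c$ via a standard $\varepsilon$-perturbation (Kirchberg–Rørdam). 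Hence $p$ is Murray–von Neumann equivalent to a projection in $\overline{bBb}$.

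The main obstacle is the Kishimoto-type pinching step: showing that the nontrivial spectral components of a C*-discrete inclusion can be uniformly compressed by elements of hereditary subalgebras of $A$. This requires extracting proper outerness of the UTC action from irreducibility together with simplicity of $A$. My first attempt would be the following. Suppose that for some irreducible $\pi\neq 1$ and some hereditary subalgebra $D\subseteq A$ the compression of the $\pi$-component cannot be made small. Then there would be a nonzero $A$-central vector in a corner of $\mathbb{E}(\pi)$ over $D$. Simplicity of $A$ would let us spread this vector out to a nonzero element of $A'\cap M(B)$ outside $\mathbb{C}$, contradicting irreducibility.
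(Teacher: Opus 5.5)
Your reduction is the paper's. You show that $E$ pinches: the paper proves $E$ is outer in Proposition \ref{prop:discreteOuterExp} and converts outerness into pinching in Proposition \ref{prop:pinchingOuter}. You note that $A\subseteq B$ is an SP-inclusion because $A$ has (SP), and you finish with Osaka's argument, i.e.\ Proposition \ref{prop:pinchingSP}. Your ``direct mechanism'' is exactly that proposition, with $b$ first approximated from the algebraic part.

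The gap is that the one substantive input, the Kishimoto-type compression of the nontrivial spectral components, is never established. The contradiction argument you sketch for it does not work.

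\begin{itemize}
\item A failure of pinching only gives a quantitative bound $\inf\{\|c\,y\,c\| : c\in D_+,\ \|c\|=1\}\ge\delta>0$. Nothing turns this into an exact $D$-central vector in the C*-bimodule.
\item Any compactness argument that extracts such a vector lands in a W*-completion (bidual, weak closure of a representation, ultrapower). There such vectors exist even for irreducible C*-discrete inclusions, so no contradiction with irreducibility follows.
\item For example, $\alpha=\bigotimes_n\operatorname{Ad}\operatorname{diag}(1,-1)$ on $M_{2^\infty}$ is outer, so $M_{2^\infty}\subseteq M_{2^\infty}\rtimes_\alpha\mathbb{Z}/2$ is irreducible and C*-discrete. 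Yet $\alpha$ fixes the pure product state $\bigotimes_n\langle\,\cdot\,e_1,e_1\rangle$, hence is unitarily implemented in its irreducible GNS representation.
\item Likewise, ``these bimodules behave like properly outer automorphisms'' restates the lemma rather than proving it.
\end{itemize}

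Kishimoto's lemma needs a genuinely norm-level input. For a single automorphism, simplicity upgrades outerness to the gap $\|\alpha-\operatorname{Ad}u\|=2$ for all unitaries $u\in M(A)$, and it is this gap, not an exact intertwiner, that becomes the compression estimate. The bimodule analogue for C*-discrete inclusions is a separate, nontrivial theorem.

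The paper does not reprove it either; it imports it as \cite[Lemma 4.13 (3)]{MR5099905}. That lemma says: for each $b\in PQN(A\subseteq B)$, each nonzero hereditary $H\subseteq A$ and each $\varepsilon>0$, there is $h\in H_+$ with $\|h\|=1$ and $\|h(b-E(b))h\|<\varepsilon$. Because $\|h\|=1$, density of $PQN(A\subseteq B)$ extends this to every $x\in B$ with $E(x)=0$, which is outerness of $E$.

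Your two simultaneous requirements on $c$ then come for free. Apply outerness inside $\overline{f(a_0)Af(a_0)}$ with $f$ supported near $\|a_0\|$; there every norm-one positive $h$ automatically satisfies $\|ha_0h\|>\|a_0\|-\varepsilon$ (Proposition \ref{prop:pinchingOuter}). Replace your contradiction sketch by this citation, or by a complete proof of the bimodule Kishimoto lemma, and the rest of your plan goes through.
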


The paper is organized as follows. 
In Section \ref{sec:basic} we introduce the notion of SP-inclusions and establish some basic results, including a topological description of our inclusions in the commutative setting. 
We continue by showing several permanence properties of these inclusions in Section \ref{sec:permanenceProperties}. 
In Section \ref{sec:Symmetries}, we study the permanence of Property (SP) for C*-algebras under quantum symmetries. 
First, we extend classical results to the (relative) SP-inclusion setting and then we show that, under mild assumptions, Property (SP) is permanent under C*-discrete inclusions. 
We conclude with the study of inclusions coming from the Furstenberg boundary in Section \ref{sec:Boundaries}.
In particular, we study the embedding $C_r^*(G) \subseteq C(\partial_FG) \rtimes_r G$ and describe sufficient conditions on  $G$ for the embedding to be an SP-inclusion. \\

\noindent {\bf Acknowledgments.} The authors were supported by an NSERC Discovery Grant held by Michael Brannan. 
AG would like to thank Matthew Kennedy for his helpful comments on the construction of Example \ref{exmp:furstenberg}. 

\tableofcontents

\section{Basic Properties}\label{sec:basic}

Recall that a C*-algebra $A$ is said to have {\bf Property (SP)} if every hereditary C*-subalgebra of $A$ contains a nonzero projection. This is a strictly weaker property than having real rank zero (RR0); $A$ has {\bf real rank zero} if every hereditary C*-subalgebra of $A$ contains an approximate identity of projections. 

\begin{notn}
    Let $A$ be a C*-algebra, $\lambda \geq 0$, and $\varepsilon > 0$. Let us define the function $((\cdot) - \varepsilon)_+:[0,\infty)\to [0,\infty)$
    \[ (\lambda-\varepsilon)_+ = \begin{cases}
        0 \quad & 0 \leq \lambda \leq \varepsilon \\
        \lambda - \varepsilon & \lambda > \varepsilon
    \end{cases}\]
    Using functional calculus, we can define $(a-\varepsilon)_+ \in A_+$ for any $a \in A_+$. 
\end{notn}

The following characterization of C*-algebras with Property (SP) is well-known (e.g., it is stated in \cite[Definition 6.1.1]{MR1292013}). However, as we were unable to locate a proof in the literature, and the proof illustrates the sort of spectral calculus arguments which will be useful in the sequel, we include one here.

\begin{lem}\label{lem:equivalentSP}
    Given a C*-algebra $A$, the following are equivalent:
    \begin{enumerate}[(i)]
        \item $A$ has Property (SP)
        \item For any nonzero element $a \in A_+$, there exists $0 \neq p \in \Proj(A)$ and $\lambda > 0$ such that $a \geq \lambda p$.
    \end{enumerate}
\end{lem}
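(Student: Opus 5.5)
The plan is to prove the two implications separately. Throughout, the hereditary subalgebra generated by a nonzero $a \in A_+$ is $\overline{aAa}$.

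For (ii)$\Rightarrow$(i), take a nonzero hereditary C*-subalgebra $D \subseteq A$ and choose a nonzero $a \in D_+$. By (ii) there are a nonzero projection $p$ and $\lambda>0$ with $0 \le \lambda p \le a$. Since $D$ is hereditary and $\lambda p \le a \in D$, we get $\lambda p \in D$. Hence $p = \lambda^{-1}(\lambda p) \in D$, so $D$ contains a nonzero projection.

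For (i)$\Rightarrow$(ii), let $0 \neq a \in A_+$. The obvious choice $\overline{aAa}$ is not good enough: a projection in it need not be dominated by a multiple of $a$, since $a$ may have spectrum accumulating at $0$. The fix is to cut away from $0$.

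1. Normalize so that $\|a\| = 1$. Pick $\varepsilon \in (0,1)$, say $\varepsilon = 1/2$, so that $(a-\varepsilon)_+ \neq 0$.
2. Apply (i) to the nonzero hereditary subalgebra $D = \overline{(a-\varepsilon)_+ A (a-\varepsilon)_+}$ to obtain a nonzero projection $p \in D$.
3. Let $f$ be the continuous function on $[0,\infty)$ that is $0$ on $[0,\varepsilon/2]$, linear on $[\varepsilon/2,\varepsilon]$, and $1$ on $[\varepsilon,\infty)$. Set $g(t) = t$.
4. Since $f \cdot (t-\varepsilon)_+ = (t-\varepsilon)_+$, the element $f(a)$ acts as a unit on $(a-\varepsilon)_+$. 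Hence $f(a)x = x = xf(a)$ for every $x \in D$, and in particular $f(a)p = p = pf(a)$.
5. Pointwise we have $\frac{\varepsilon}{2} f(t)^2 \le \frac{\varepsilon}{2} f(t) \le t$ for $t \ge 0$, because $f$ vanishes below $\varepsilon/2$ and $f \le 1$. Functional calculus then gives $\frac{\varepsilon}{2} f(a)^2 \le a$.
6. Compress both sides by $p$: $p f(a) a f(a) p \le \|a\|\, p = p$ is not what is wanted. Instead, compress the inequality of step 5 by $p$ to get
\[
\tfrac{\varepsilon}{2}\, p = \tfrac{\varepsilon}{2}\, p f(a)^2 p \le p a p.
\]
7. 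It remains to compare $pap$ with $a$, which is not automatic; this is the main obstacle. To resolve it, I would use that $p$ commutes with $a$. Indeed, $p \in D$ implies $p = h(a)\,p\,h(a)$ for any continuous $h$ with $h = 1$ on $[\varepsilon/2, \infty)$. A cleaner route is to note that $p = f(a) p f(a)$ and then bound directly: $a \ge a^{1/2} f(a)^2 a^{1/2} \ge \tfrac{\varepsilon}{2} f(a)^2$.

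The step most likely to need care is showing $a \ge \lambda p$ from $p \le f(a)$-type relations. The best approach is to note that $p = pf(a)$ implies $p \le f(a)^2$ in the following sense. Since $f(a)$ is a contraction, $1 - f(a)^2 \ge 0$ in the unitization. Moreover $p(1 - f(a)^2)p = 0$, because $pf(a) = p$ gives $pf(a)^2p = p$. From $p(1-f(a)^2)p = 0$ we get $(1-f(a)^2)^{1/2}p = 0$, so $f(a)^2 p = p$, and $p$ lies in the spectral range where $f(a)^2 = 1$.

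Hence $p \le f(a)^2$. One way to see this is in the commutative C*-algebra generated by $p$ and $f(a)^2$: the two commute, since $f(a)^2 p = p = p f(a)^2$. Combining with step 5 gives
\[
\tfrac{\varepsilon}{2}\, p \le \tfrac{\varepsilon}{2} f(a)^2 \le a,
\]
so (ii) holds with $\lambda = \varepsilon/2$, after undoing the normalization of $a$.
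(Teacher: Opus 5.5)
Your proof is correct, and for (i)$\Rightarrow$(ii) it takes a genuinely different, more elementary route than the paper. The paper works in the bidual. It identifies the open projection of $\overline{(a-\varepsilon)_+A(a-\varepsilon)_+}$ with the support projection $q=s((a-\varepsilon)_+)=\mathbf{1}_{(\varepsilon,\infty)}(a)\in A^{**}$. From this it gets $p\le q$ and $\varepsilon q\le a$ in $A^{**}$, and then pulls $\varepsilon p\le a$ back to $A$ because the order on $A$ agrees with that on $A^{**}$.

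You replace the spectral projection $q$ by the continuous cutoff $f(a)\in A$, which acts as a unit on $D$. The key inequality $p\le f(a)^2$ then follows from the identity $c-p=(1-p)c(1-p)\ge 0$, valid in $\widetilde{A}$ for $0\le c\le 1$ with $cp=p=pc$. The bound $\tfrac{\varepsilon}{2}f(a)^2\le a$ is pure functional calculus. This avoids open projections, the support-projection computation and the bidual altogether.

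The cost is the constant $\varepsilon/2$ instead of $\varepsilon$. You could push it to $\varepsilon-\delta$ by ramping $f$ on $[\varepsilon-\delta,\varepsilon]$. The paper's version also yields the spectral picture $p\le \mathbf{1}_{(\varepsilon,\infty)}(a)$, which it reuses in the proof of Proposition~\ref{prop:pinchingOuter}.

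Two cleanup remarks:
\begin{enumerate}
\item The aside in step 7 claiming that $p$ commutes with $a$ is false in general. Take $A=M_3$, $a=\operatorname{diag}(1,0.9,0)$, $\varepsilon=1/2$, and let $p$ be the projection onto $\mathbb{C}(e_1+e_2)$, which lies in $D=M_2\oplus 0$ but does not commute with $a$. Delete that aside, and step 6 as well, since neither is used.
\item The ending can be shortened. The relations $f(a)p=p$ and $0\le f(a)\le 1$ already give $p\le f(a)$ by the same identity. Combined with $\tfrac{\varepsilon}{2}f(a)\le a$, this yields $\tfrac{\varepsilon}{2}p\le a$ without passing through $f(a)^2$.
\end{enumerate}
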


\begin{proof}
    First, let us show $(ii) \Rightarrow (i)$. That is, we assume that for all $0 \neq a \in A_+$, there exist $0 \neq p \in \Proj{(A)}$, $\lambda > 0$ such that $a \geq \lambda p$.
    Let $B \subseteq A$ be a hereditary subalgebra and let $0 \neq b \in B_+$. 
    Then there exists some $\lambda > 0$, $0 \neq p \in \Proj{(A)}$ such that $b \geq \lambda p$ and $$0 \leq p \leq \lambda^{-1}b \in B.$$
    Since $B$ is a hereditary subalgebra of $A$, this means that $p \in B$.
    So any hereditary subalgebra of $A$ contains a nonzero projection, i.e., $A$ has Property (SP), as required.
    
    To show $(i) \Rightarrow (ii)$, we assume that $A$ has Property (SP).
    Take $0\neq a \in A_+$ and pick $\varepsilon \in (0, \norm{a})$. Then $0 \neq b := (a-\varepsilon)_+ \in A_+$. 
    By the bijective correspondence between hereditary subalgebras of $A$ and open projections in $A^{**}$ (see e.g., \cite[Paragraph 3.11.10]{MR3839621}), there is a unique open projection $q \in A^{**}$ such that $$\overline{bAb} = qA^{**}q \cap A.$$ 

    By assumption, it follows that $qA^{**}q \cap A$ contains some nonzero projection $p.$
    In particular, we have $0 \neq p \leq q$ in $A^{**}$.
    
    We need to show that $\varepsilon p \leq a$ in $A$.
    First, we will show that the support projection of $b$ in $A^{**}$, namely $s(b):=\inf\left\{e \in \Proj{(A^{**})}: eb = b = be \right\}$, is exactly $q$.
    It is clear from the definition that $s(b) \leq q$.
    To show the other inequality, first note that every element of $\overline{bAb}$ is supported by $s(b)$: for all $x \in A$ it is clear that $s(b)(bxb)s(b) = bxb.$
    Now, let $(e_\lambda)_{\lambda \in \Lambda}$ be a contractive approximate identity in $\overline{bAb}$, so that $(e_\lambda) \to q$ in the weak$^{*}$-topology of $A^{**}$.
    We have $e_\lambda = s(b)e_\lambda s(b)$ for each $\lambda \in \Lambda$ and left (resp. right) multiplication by a fixed element is weak${^*}$-continuous, passing to the weak$^{*}$-limit gives $q = s(b)qs(b)$.
    Since $s(b)$ and $q$ are both projections, this means $q \leq s(b)$.
    In particular, we can conclude that $q = s(b).$
    That is, 
    \[q = s((a-\varepsilon)_{+})  = \mathbf{1}_{(\varepsilon, \infty)}(a).\]
    Therefore, we have $\varepsilon q \leq a$ in $A^{**}$.
    Putting our inequalities together, it follows that $\varepsilon p \leq \varepsilon q \leq a$ in $A^{**}$.
    In particular, since $a, p \in A$ and the order in $A$ agrees with the order in $A^{**}$, we get $\varepsilon p \leq a$ in $A$, as required. 
\end{proof}

\begin{rem}
    While we defined Property (SP) for a C*-algebra to be the property that \emph{every} nonzero hereditary C*-subalgebra contains a nonzero projection, it is sufficient to only check hereditary subalgebras generated by nonzero positive elements. (If $A$ is separable, then every hereditary C*-subalgebra is generated by some nonzero positive element, so this is immediate).

    Let us suppose that $A$ is a C*-algebra in which $\overline{aAa}$ contains a nonzero projection for any $0 \neq a \in A_+.$ 
    We wish to show that $A$ has Property (SP). 
    
    Let $H$ be an arbitrary nonzero hereditary subalgebra of $A$.
    Pick some $0 \neq x \in H$ and define \[0 \neq b:= xx^* \in H_+.\]
    By our assumption, along with the fact that $\overline{bAb}$ is the smallest hereditary subalgebra of $A$ containing $b$ (see, e.g., \cite[Proposition II.3.4.2 (ii)]{Blackadar}), we see that there is a nonzero projection
    \[p \in \overline{bAb} \subseteq H,\] as required. 

    From now on, we effectively take the definition of Property (SP) to be those C*-algebras in which any hereditary subalgebra generated by a nonzero positive element contains a nonzero projection.
\end{rem}

\begin{defn}[Definition \ref{maindefn:relativeSP}]\label{defn:relativeSP}
    We say that an inclusion of C*-algebras $A \subseteq B$ is an \emph{SP-inclusion} if for any nonzero element $a \in A_+$ the hereditary C*-subalgebra $\overline{aBa}$ of $B$ contains a nonzero projection. 
\end{defn}

The characterization of Property (SP) for C*-algebras given in Lemma \ref{lem:equivalentSP} has the following analogue for SP-inclusions. 
We omit the proof, as it is essentially the same as above.

\begin{prop}\label{prop:equivalentInclusion}
    For an inclusion of C*-algebras $A \subseteq B$, the following are equivalent:
    \begin{enumerate}[(i)]
        \item $A \subseteq B$ is an SP-inclusion
        \item For any nonzero element $a \in A_+$, there exists $0 \neq p \in \Proj(B)$ and $\lambda > 0$ such that $a \geq \lambda p$ in $B$.
    \end{enumerate} 
\end{prop}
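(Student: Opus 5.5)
The plan is to adapt the proof of Lemma \ref{lem:equivalentSP}, replacing $A$ by the ambient algebra $B$ and working only with hereditary subalgebras of $B$ generated by positive elements of $A$.

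For $(ii)\Rightarrow(i)$, fix $0\neq a\in A_+$. Hypothesis (ii) gives $0\neq p\in\Proj(B)$ and $\lambda>0$ with $0\le p\le \lambda^{-1}a$ in $B$. Since $\lambda^{-1}a\in\overline{aBa}$, heredity of $\overline{aBa}$ in $B$ forces $p\in\overline{aBa}$. Here one needs $a\in\overline{aBa}$. This holds because $a^{1/n}\,a\,a^{1/n}$ lies in $aBa$ up to closure: write $a^{1/n}=\lim a f_k(a)$ with $f_k$ polynomials in $a$, or more simply note that $a^3\in aBa$ and $a$ lies in the C*-algebra generated by $a^3$. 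Hence $\overline{aBa}$ contains the nonzero projection $p$, which is (i).

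For $(i)\Rightarrow(ii)$, fix $0\neq a\in A_+$ and $\varepsilon\in(0,\norm{a})$, and set $b:=(a-\varepsilon)_+$. Then $b\in A_+$ is nonzero, since the functional calculus stays inside $A$. Applying (i) to $b$ gives a nonzero projection $p\in\overline{bBb}$. Let $q\in B^{**}$ be the open projection corresponding to $\overline{bBb}$, so that $\overline{bBb}=qB^{**}q\cap B$. The same argument as in Lemma \ref{lem:equivalentSP}, run in $B^{**}$, identifies $q$ with the support projection $s(b)=\mathbf{1}_{(\varepsilon,\infty)}(a)$: one shows $s(b)\le q$, and then passes an approximate identity of $\overline{bBb}$ to the weak$^*$ limit to get $q\le s(b)$. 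This yields $\varepsilon q\le a$ in $B^{**}$. Since $p\in qB^{**}q$ is a projection, $p\le q$, so $\varepsilon p\le a$ in $B^{**}$, and hence in $B$ because the order on $B$ is inherited from $B^{**}$. This gives (ii) with $\lambda=\varepsilon$.

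The only point requiring care is this support-projection identification. One must check that the functional calculus of $a$ computed in $B^{**}$ agrees with the one computed in $A^{**}$, or simply perform every computation inside $B^{**}$, which is harmless since $a\in B$. I expect no genuine obstacle beyond this bookkeeping, which is presumably why the authors omit the proof.
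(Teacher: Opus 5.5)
Your proposal is correct and is exactly the argument the paper intends when it omits the proof as ``essentially the same'' as that of Lemma~\ref{lem:equivalentSP}: you use heredity of $\overline{aBa}$ for (ii)$\Rightarrow$(i), and for (i)$\Rightarrow$(ii) you apply (i) to $b=(a-\varepsilon)_+$ and identify the open projection of $\overline{bBb}$ in $B^{**}$ with $s(b)=\mathbf{1}_{(\varepsilon,\infty)}(a)$. As a minor simplification, you could skip the open projection entirely: every $x\in\overline{bBb}$ satisfies $x=s(b)xs(b)$, which gives $p\le s(b)$ directly.
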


With this characterization in hand, we are able to state and prove our first main result, which justifies that Definition \ref{defn:relativeSP} is the correct analogue of Property (SP) for inclusions.

\begin{thm}[Theorem \ref{mainthm:mainSP}]\label{thm:mainSP}
Let $A\subseteq B$ be an inclusion of C*-algebras. If there exists an intermediate C*-algebra with Property (SP), then $A\subseteq B$ is an SP-inclusion. In particular, if either $A$ or $B$ has Property (SP), then  $A\subseteq B$ is an SP-inclusion. Moreover, a C*-algebra $C$ has Property (SP) if and only if $C\subseteq C$ is an SP-inclusion.
\end{thm}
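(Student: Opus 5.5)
The plan is to argue directly from Definition \ref{defn:relativeSP}, using Lemma \ref{lem:equivalentSP} and Proposition \ref{prop:equivalentInclusion} to pass between hereditary subalgebras and operator inequalities. Suppose $A \subseteq C \subseteq B$ with $C$ having Property (SP). Fix a nonzero $a \in A_+$. Since $a \in C_+$ is nonzero, Lemma \ref{lem:equivalentSP} applied to $C$ yields a nonzero projection $p \in C$ and $\lambda > 0$ with $a \geq \lambda p$ in $C$. The order on $C$ agrees with the order on $B$, because positivity of the self-adjoint element $a - \lambda p$ is determined by its spectrum, and spectrum is preserved under passage to a larger C*-algebra. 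Hence $a \geq \lambda p$ in $B$, with $p \in \Proj(B)$ nonzero. Proposition \ref{prop:equivalentInclusion} then shows that $A \subseteq B$ is an SP-inclusion.

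Alternatively, one can avoid Proposition \ref{prop:equivalentInclusion} and argue directly. From $0 \le p \le \lambda^{-1} a$ we get $p = p\,p\,p \leq \lambda^{-1} p a p$. The key step is to show $p \in \overline{aBa}$. For this, use that $\overline{aBa}$ is hereditary in $B$ and contains $a$, so it contains every $b \in B_+$ with $b \leq \lambda^{-1} a$. In particular it contains $p$.

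The ``in particular'' clause follows by taking the intermediate algebra to be $C = A$ or $C = B$.

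For the final equivalence, first note that ``$C \subseteq C$ is an SP-inclusion'' says exactly that $\overline{aCa}$ contains a nonzero projection for every nonzero $a \in C_+$. By the Remark following Lemma \ref{lem:equivalentSP}, this is equivalent to Property (SP) for $C$. The forward direction is also a special case of the first part, with intermediate algebra $C$.

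The only point needing care is the agreement of orders between $C$ and $B$, together with the hereditary step. Both are routine, so I expect no real obstacle.
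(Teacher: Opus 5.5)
Your proof is correct, but for the first assertion it takes a longer route than the paper. The paper argues directly: $C$ has Property (SP) and $0\neq a\in A_+\subseteq C_+$, so the nonzero hereditary subalgebra $\overline{aCa}$ of $C$ already contains a nonzero projection, and $\overline{aCa}\subseteq\overline{aBa}$ finishes the argument.

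You instead pass through the operator-inequality characterization. That is, you apply Lemma \ref{lem:equivalentSP} in $C$, transfer the inequality to $B$, and conclude with Proposition \ref{prop:equivalentInclusion} or with heredity of $\overline{aBa}$. This works, but it routes a one-line containment through Lemma \ref{lem:equivalentSP}, whose proof needs open and support projections in the bidual. The aside $p=ppp\le\lambda^{-1}pap$ plays no role. For the order comparison you only need the trivial direction: $a-\lambda p=y^*y$ with $y\in C\subseteq B$, so no spectral permanence is required.

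For the final equivalence the roles are reversed. The paper deduces Property (SP) of $C$ from $C\subseteq C$ being an SP-inclusion via Proposition \ref{prop:equivalentInclusion} together with Lemma \ref{lem:equivalentSP}. You appeal to the Remark after Lemma \ref{lem:equivalentSP}, namely that it suffices to test hereditary subalgebras of the form $\overline{aCa}$. Your route is the more elementary one and makes that direction essentially tautological under the paper's working definition of Property (SP).
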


\begin{proof}
    Let $A\subseteq B$ be an inclusion of C*-algebras and let $C$ be a C*-algebra with Property (SP) such that $A \subseteq C \subseteq B$.
    Then for any $0 \neq a \in A_+$, by definition of Property (SP), $\overline{aCa}$ contains a nonzero projection. 
    Since $\overline{aCa} \subseteq \overline{aBa}$, the latter must also contain a nonzero projection and so $A \subseteq B$ is an SP-inclusion.
    So, if either $A$ or $B$ have Property (SP), then so does the inclusion $A \subseteq B$. 
    
    It follows from above that if $C$ has Property (SP) then the inclusion $C \subseteq C$ does too.
    Conversely, suppose now that $C\subseteq C$ is an SP-inclusion. That $C$ must have Property (SP) is immediate from Lemma \ref{lem:equivalentSP} and Proposition \ref{prop:equivalentInclusion}.
\end{proof}

\begin{exmp}\label{exmp:both}
From Theorem \ref{thm:mainSP}, we obtain a myriad of examples of SP-inclusions by considering any subalgebra $A$ of a C*-algebra $B$ with Property (SP). For example, if $\pi: A \to B(H)$ is a non-degenerate $\ast$-representation of an abstract C*-algebra $A$, then $\pi(A) \subseteq B(H)$ is an SP-inclusion. If $A$ has Property (SP) and $\pi$ is faithful, then both algebras in the inclusion $\pi(A) \subseteq B(H)$ have Property (SP).
\end{exmp}

\begin{exmp}\label{exmp:irrationalRotation}
    By \cite[Remark 6]{MR1247990}, the irrational rotation algebra $A_\theta$ has Property (SP), and by Theorem \ref{thm:mainSP}, the inclusion
    $$
    C(\mathbb{T}) \subseteq A_\theta
    $$
    is an SP-inclusion, despite $C(\mathbb{T})$ not having Property (SP).
    Recall also that $A_\theta \cong C(\mathbb{T})\rtimes_\theta \mathbb{Z}$ (see e.g., \cite[Example VIII.1.1]{MR1402012}); we will have more to say about Property (SP) and inclusions arising as crossed products in Sections \ref{sec:Symmetries} and \ref{sec:Boundaries}.
\end{exmp}

We should justify that SP-inclusions are a generalization of the inclusions of real rank zero introduced by Gabe and Neagu in \cite{GN25}. 
Recall that they defined $A \subseteq B$ to be an \emph{inclusion of real rank zero} if for any nonzero-element $a \in A_+$ the hereditary C*-subalgebra $\overline{aBa}$ contains an approximate identity of projections. 
Keeping in  line with our terminology, we will call such an inclusion a {\bf RR0-inclusion}.

\begin{prop}\label{prop:RR0}
    If $A \subseteq B$ is a  RR0-inclusion, then it is an SP-inclusion. However, the converse need not hold.
\end{prop}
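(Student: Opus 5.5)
The forward implication follows directly from the definitions. Suppose $A \subseteq B$ is an RR0-inclusion and let $0 \neq a \in A_+$. Then $\overline{aBa}$ is a nonzero hereditary C*-subalgebra of $B$, since it contains $a^3 \neq 0$. By hypothesis it contains an approximate identity $(p_\lambda)$ of projections. If every $p_\lambda$ were zero, we would get $x = \lim p_\lambda x = 0$ for all $x \in \overline{aBa}$, which contradicts $a^3 \in \overline{aBa}$. So some $p_\lambda$ is a nonzero projection in $\overline{aBa}$, and $A \subseteq B$ is an SP-inclusion by Definition \ref{defn:relativeSP}.

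For the failure of the converse, I will use Theorem \ref{thm:mainSP} together with the fact, immediate from the definitions, that $C \subseteq C$ is an RR0-inclusion if and only if $C$ has real rank zero. It then suffices to find a C*-algebra $C$ with Property (SP) that does not have real rank zero. In that case $C \subseteq C$ is an SP-inclusion by Theorem \ref{thm:mainSP} but not an RR0-inclusion.

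A concrete choice is $C = B(H) \oplus C([0,1])$. I expect the main obstacle to be simply making sure that the Property (SP) verification actually works for this $C$. Let $0 \neq (x,f) \in C_+$.
\begin{enumerate}[(i)]
\item If $x \neq 0$, choose $\varepsilon \in (0,\norm{x})$. The spectral projection $e = \mathbf{1}_{(\varepsilon,\infty)}(x) \in B(H)$ is nonzero and satisfies $x \geq \varepsilon e$, so $(x,f) \geq \varepsilon (e,0)$.
\item If $x = 0$, then $f \neq 0$, and $C([0,1])$ has no nonzero projections. In this case the hereditary subalgebra $\overline{(0,f)C(0,f)}$ contains no nonzero projection.
\end{enumerate}
Case (ii) shows this choice fails, so I replace it with a C*-algebra with Property (SP) that is known to have nonzero real rank. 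A standard example is a simple unital AH-algebra of Goodearl type with real rank one. Such algebras are simple and have Property (SP), and for the diagonal Goodearl-type ones real rank zero corresponds to density of the image of $K_0$ in $\mathrm{Aff}(T(C))$, so they can be chosen with real rank one.

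An alternative is the hereditary condition directly: take $C = \widetilde{K}\otimes C([0,1])$-type constructions. I would settle on the cleanest option of the following form. Let $C \subseteq C([0,1], M_2)$ consist of the functions $f$ with $f(0)$ diagonal, or more simply take $C = C([0,1])\otimes K$ with unit adjoined... but these lack Property (SP). So I commit to citing a simple Property (SP) algebra of real rank one, such as Goodearl's example, or Villadsen's algebras, which have stable rank one and Property (SP) but positive real rank. Combining this with the observation above completes the proof.
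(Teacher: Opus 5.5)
Your proof is correct and follows the paper's argument. The forward direction is read off from the definition (your check that the approximate unit cannot consist entirely of zero projections is welcome extra care), and the converse uses $C \subseteq C$ for a C*-algebra $C$ with Property (SP) but not real rank zero, such as the Goodearl algebras the paper also invokes, together with Theorem \ref{thm:mainSP}.

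The differences are cosmetic. The paper cites \cite[Lemma 1.9]{GN25} for the equivalence ``$C \subseteq C$ is an RR0-inclusion iff $C$ has real rank zero'' that you call immediate. It is literally immediate for separable $C$, such as a Goodearl algebra, since then every hereditary subalgebra has the form $\overline{aCa}$. Your abandoned detours through $B(H)\oplus C([0,1])$ and the unsupported aside on Villadsen algebras are unnecessary and should be removed from a final write-up.
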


\begin{proof}
    Let $0 \neq a \in A_+$. If $A \subseteq B$ is a RR0-inclusion, then by definition $\overline{aBa}$ contains an approximate unit consisting of projections. In particular, it certainly contains a nonzero projection. So $A \subseteq B$ is an SP-inclusion.

    Let $C$ be a C*-algebra which has Property (SP) but does not have real rank zero (e.g., certain Goodearl algebras, as described in Example \ref{exmp:Goodearl}, and R{\o}rdam's examples of simple C*-algebras with both finite and infinite projections \cite{MR2238291}). 
    Then $C \subseteq C$ is an SP-inclusion by Theorem \ref{thm:mainSP}. However, by \cite[Lemma 1.9]{GN25}, $C \subseteq C$ is an RR0-inclusion if and only if $C$ has real rank zero.
\end{proof}

Proposition \ref{prop:RR0} tells us that we can find examples of SP-inclusions by considering any of the RR0-inclusions which are constructed in \cite{GN25}. 
 In particular, we can find SP-inclusions $A \subseteq B$ where neither $A$ nor $B$ have Property (SP).
With a view towards constructing such an example, we will take a brief detour to consider the commutative setting.

\subsection{Commutative C*-algebras and Inclusions} Let us first consider the topological properties of $X$ for the commutative C*-algebras $C_0(X)$ which are of interest to us.

It is known that for any locally compact Hausdorff space $X$, $C_0(X)$ has Property (SP) if and only if every open subset of $X$ contains a compact open subset \cite{MR1292013}.
On the other hand, recall that $C_0(X)$ has real rank zero if and only if $X$ is \emph{totally disconnected}; that is, $X$ has a basis of compact open sets. 
Both of these characterizations follow from the correspondence between open subsets of $X$ and hereditary subalgebras of $C_0(X)$, as well as the well-known fact that projections in $C_0(X)$ are characteristic functions of compact open sets. 

\begin{rems} Some remarks about these topological properties are in order.
    \begin{enumerate}[(i)]
        \item In the topological literature, the property ``each open subset of $X$ contains a compact open subset'' is sometimes referred to by saying that $X$ has a \emph{$\pi$-basis} of compact open sets (e.g., \cite[p.227]{MR2049453}). We note that there does not appear to be a standard term for a topological space with a $\pi$-basis of compact open sets.
        \item For locally compact Hausdorff spaces, $X$ having \emph{topological dimension zero} is equivalent to being totally disconnected. 
        This justifies Brown and Pedersen's motivating perspective that real rank zero is a non-commutative analogue of topological dimension zero \cite{MR1120918}.  
    \end{enumerate} 
\end{rems}

Somewhat surprisingly, an explicit example of {\bf a commutative C*-algebra with Property (SP) but \emph{not} having real rank zero} seems to have been overlooked in the operator algebraic literature. 
Such examples may be well-known to experts; nevertheless, we describe some below for posterity.

\begin{exmp} 
    Let $X = C_1 \sqcup C_2$ be the set consisting of two concentric circles $$C_i := \{ (i, \theta): 0 \leq \theta < 2\pi\},$$ for $i = 1, 2,$
    let $p: C_1 \to C_2$ be the bijection defined by $$(1, \theta) \mapsto (2, \theta),$$
    and let $U(x,n) \subseteq C_1$ be the open arc of length $\frac{1}{n}$ ($n \in \mathbb{N}$) which is  centered at $x \in C_1$.
    We equip $X$ with a certain topology so that each $x \in C_2$ is isolated and the open basic neighbourhoods of $x \in C_1$ are given by $$B(x, n) = U(x,n) \cup p\left[U(x,n) \setminus \{x\}\right], \, n \in \mathbb{N};$$ see Figure \ref{fig:nbhood} for the latter.
     This space is called \emph{Alexandroff's double circle} or sometimes simply \emph{concentric circles} (see \cite[Example 97]{MR1382863} for details). 
    
  \begin{figure}[!ht]
    \centering
    \adjustbox{scale=0.85}{\begin{tikzpicture}\tikzset{
    white dot/.style={draw=black, fill=white, circle, inner sep=0pt, minimum size=3.5pt},
    black dot/.style={draw=black, fill=black, circle, inner sep=0pt, minimum size=3.5pt}
  }
    \draw[thick] (0,0) circle (1);
    \draw[thick] (0,0) circle (2);

    \draw [ultra thick] (0,0) ++(-15:1) arc[start angle=-15,delta angle=50,radius=1];

    \draw [ultra thick] (0,0) ++(-15:2) arc[start angle=-15,delta angle=50,radius=2];

    \coordinate (0) at (0,0);

    \draw [densely dotted, thick] (0) -- (35:2);
    \draw [densely dotted, thick] (0) -- (-15:2);

    \node[black dot, label = right: $x$] at (10:1) {}; 
    \node[white dot, label = right: $p(x)$] at (10:2) {}; 

    \node[white dot] at (-15:1) {};
    \node[white dot] at (-15:2) {};

    \node[white dot] at (35:1) {};
    \node[white dot] at (35:2) {};

    \node[] at (220:1.3) {$C_1$};
    \node[] at (220:2.3) {$C_2$};

\end{tikzpicture}}
       \caption{$B(x,n)$ for $x \in C_1$, $n \in \mathbb{N}$}
    \label{fig:nbhood}
    \end{figure}

    It is known that $X$ is compact, Hausdorff, and not totally disconnected, so $C(X)$ is a commutative, unital C*-algebra, but it cannot have real rank zero.
    However, we will show that every open subset of $X$ contains a clopen subset, i.e., $C(X)$ has Property (SP).

    Let $x \in X$. If $x \in C_2$ and $N_x$ is an open neighbourhood of $x$, then $\{x\} \subseteq N_x$. 
    Since the points of $C_2$ are isolated and $X$ is Hausdorff, $\{x\}$ is clopen, so we are done.
    If, instead, $x \in C_1$ and $N_x$ is an open neighbourhood of $x$, then there is some $n \in \mathbb{N}$ such that $B(x,n) \subseteq N_x$.
    Moreover, $B(x,n)$ contains points of $C_2$ via the mapping of the arc $U(x,n) \subseteq C_1$ to $C_2$ under the bijection $p$. 
    In particular, there is $y \in C_2$ such that $\{y\} \subseteq B(x,n) \subseteq N_x.$
    As we argued above, $\{y\}$ is a clopen set, so any open neighbourhood of $x$ contains a clopen subset.
    Since any open subset $U \subseteq X$ is an open neighbourhood of each $x \in U$, we are done.
\end{exmp}

\begin{rem}\label{rem:isolatedPoints}
    Somewhat implicit in the preceding example is the following observation: if $X$ is a compact Hausdorff space containing a dense subset of isolated points, then every nonempty open subset of $X$ contains a clopen singleton. 
    In particular, these assumptions are sufficient for showing that $X$ has a $\pi$-basis of clopen sets.
\end{rem}

In light of this example--since Alexandroff's double circle is non-metrizable--we consider here a question posed to us by Mikael R{\o}rdam: if $X$ is a compact, Hausdorff, (completely) metrizable space such that $C(X)$ has Property (SP), does it necessarily have real rank zero?  
Under this set of assumptions on $X$\footnote{Metrizability of $X$ is required for this equivalence if we adopt the convention that an AF-algebra must be separable. If we allow for the existence of non-separable AF-algebras, then one can show more generally that $C(X)$ has real rank zero if and only if $C(X)$ is an AF-algebra (see e.g., \cite[Proposition 2.18]{MR3987301}).}, it is known that $C(X)$ has real rank zero if and only if $C(X)$ is an AF-algebra (see e.g., \cite[Paragraph 6.1.3]{MR1292013}). 

\smallskip
The following example shows that Property (SP) remains weaker than real rank zero for commutative C*-algebras $C(X)$, even under the additional assumption that $X$ is metrizable. 

\begin{exmp} We proceed with a construction  inspired by the \emph{metrizable compactification of the natural numbers} as described in the proof of \cite[Proposition 2.1]{MR2213623}.

  Let $\{q_k\}_{k=1}^{\infty}$ be an enumeration of $\mathbb{Q} \cap [0,1].$
  For each $n \in \mathbb{N}$, define \[A_n := \{q_1, \ldots, q_n\} \times \left\{\frac{1}{n}\right\}.\] 
  Then take \[A := \bigcup_{n=1}^{\infty} A_n,\] which is a countable, discrete subspace of $[0,1] \times [0,1].$
  
  Now define a (closed, bounded) subspace of $[0,1] \times [0,1]$ by
  \[X := [0,1] \times \{0\} \cup A.\] 
  It follows immediately that $X$ is compact, Hausdorff, and metrizable.
  Moreover, it is clear from our construction that $A$ is dense in $X$ and every point of $A$ is isolated in $X$. 

  By Remark \ref{rem:isolatedPoints}, every nonempty open subset of $X$ contains a clopen subset; in particular, $C(X)$ has Property (SP).
  That $C(X)$ does not have real rank zero follows from the observation that \[1 = \operatorname{dim}([0,1] \times \{0\}) \leq \operatorname{dim}(X).\]
\end{exmp}

\begin{rem}
    While Property (SP) is strictly weaker than real rank zero for commutative C*-algebras in most cases, these classes still coincide in a non-trivial setting. 
    It is known that if $G$ is a locally compact abelian group, then $C^*(G)$ has real rank zero if and only if it has Property (SP).
    In particular, this occurs exactly when $G$ is generated by compact elements \cite[Proposition 3.5]{MR3774314}.
\end{rem}

We now move from studying commutative C*-algebras with Property (SP) to the more general setting of commutative SP-inclusions. 
 
 \begin{exmp}\label{exmp:commutative} We shall now exhibit an example of {\bf an SP-inclusion where neither algebra has Property (SP)}. 
    
    Let $Y = \mathcal{C} \times [0,1]$ where $\mathcal{C}$ denotes the Cantor ternary set and let $X = [0,1]$. 
    Let $f: \mathcal{C} \twoheadrightarrow [0,1]$ be the restriction of the Cantor function to $\mathcal{C}$; this function is continuous and surjective.

    Consider the continuous surjection
        \begin{align*}
        \Phi: Y &\twoheadrightarrow X\\
        (x_1, x_2)&\mapsto f(x_1).
    \end{align*}
    We claim that the induced inclusion 
     $$\Phi^*(C(X)) \subseteq  C(Y),$$
    where
    \begin{align*}
        \Phi^*: C(X)&\to C(Y)\\
        g&\mapsto g\circ \Phi, 
    \end{align*}
    is an SP-inclusion. In order to prove this, we in fact show that it is a RR0-inclusion. 
     
    First, notice that $C(X)$ does not contain any non-trivial projections, as $X$ is connected. So, neither $C(X)$ nor $\Phi^*(C(X))\cong C(X)$ have Property (SP), since $\Phi^*$ is a $*$-isomorphism onto its image.  
    We will show that $C(Y)$ does not have Property (SP) by finding a hereditary subalgebra which does not contain any nonzero projections.
    Let $U := \mathcal{C} \times (0,1)$ be an open subset of $Y = \mathcal{C} \times [0,1]$.
    We claim that $U$ does not contain any compact open subsets. 
    Let $$\pi_2: \mathcal{C} \times (0,1) \twoheadrightarrow (0,1)$$ be the projection onto the second coordinate, which is both continuous and open  \cite[Theorem 8.6]{MR2048350}.
    Suppose by way of contradiction that there exists some nonempty, compact, open set $K \subseteq U$.
    By continuity and openness of the projection map, $\pi_2(K)$ would be a nonempty, compact, open set. 
    In particular, $\pi_2(K)$ would be a nonempty clopen subset of $(0,1)$, which contradicts the fact that the latter is connected. 
    Therefore, $C_0(U)$ is a hereditary subalgebra of $C(Y)$ which does not contain any nonzero projections; hence $C(Y)$ does not have Property (SP).
     
    Now let $Y_c$ denote the quotient space of connected components of $Y$.
    In particular, we have that \[Y_c \cong \left\{ \{x_1\} \times [0,1]: x_1 \in \mathcal{C} \right\} \cong \mathcal{C}\] and that the quotient map $q: Y \twoheadrightarrow Y_c$ 
    is a continuous surjection.
    We also define a continuous surjection 
    \begin{align*}
        \pi: Y_c &\twoheadrightarrow X \\
        (x_1,[0,1]) &\mapsto f(x_1).
    \end{align*}
    
    By \cite[Theorem 2.1]{GN25}, $\Phi^*(C(X)) \subseteq  C(Y)$ is a RR0-inclusion if and only if there is an intermediate commutative C*-algebra with totally disconnected spectrum. 
    Since $Y_c \cong \mathcal{C}$ is totally disconnected and $\Phi = \pi \circ q$, our induced inclusion 
    $$
    \Phi^*(C(X)) = q^* (\pi^* (C(X))) \subseteq q^* (C(Y_c)) \subseteq C(Y)
    $$
    is indeed a RR0-inclusion, and therefore an SP-inclusion. 
 \end{exmp}

Finally, let us consider the topological meaning of $C(X) \subseteq C(Y)$ being an SP-inclusion.

 \begin{prop}\label{prop:TopologicalSPInclusion}
     Let $X, Y$ be compact Hausdorff spaces and let $\Phi: Y \twoheadrightarrow X$ be a continuous surjection. The induced inclusion \[\Phi^*( C(X)) \subseteq C(Y)\] is an SP-inclusion if and only if for any nonempty open subset $U \subseteq X,$ the pre-image $\Phi^{-1}[U] \subseteq Y$ contains a nonempty compact open subset.
 \end{prop}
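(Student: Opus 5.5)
The plan is to translate Definition \ref{defn:relativeSP} into topology using the standard dictionary for commutative C*-algebras. For $h \in C(Y)_+$, the hereditary subalgebra $\overline{hC(Y)h}$ equals $C_0(\{h \neq 0\})$, viewed inside $C(Y)$ as the functions vanishing off the open set $\{h\neq 0\}$. Its projections are exactly the characteristic functions $\mathbf{1}_K$ of compact open sets $K \subseteq \{h \neq 0\}$. Indeed, a projection in $C(Y)$ is $\mathbf{1}_K$ with $K$ clopen, hence compact open. Conversely, any clopen $K \subseteq \{h\neq 0\}$ satisfies $\mathbf{1}_K \le \lambda^{-1} h$ where $\lambda = \min_K h > 0$, so $\mathbf{1}_K$ lies in the hereditary subalgebra.

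Next, apply this to $h = \Phi^*(g) = g \circ \Phi$ with $0 \neq g \in C(X)_+$. Here $\{g\circ\Phi \neq 0\} = \Phi^{-1}[\{g \neq 0\}]$. So $\Phi^*(C(X)) \subseteq C(Y)$ is an SP-inclusion if and only if, for every nonzero $g \in C(X)_+$, the set $\Phi^{-1}[\{g\neq 0\}]$ contains a nonempty compact open subset. Since $\Phi^*$ is injective, the nonzero positive elements of $\Phi^*(C(X))$ are precisely the $g\circ\Phi$ with $g\neq 0$.

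It remains to pass between cozero sets and arbitrary open sets. Every cozero set $\{g \neq 0\}$ with $g \neq 0$ is a nonempty open subset of $X$, so the topological condition implies the SP-inclusion property. For the converse, let $U \subseteq X$ be nonempty open and pick $x \in U$. Because $X$ is compact Hausdorff, hence normal, Urysohn's lemma gives $g \in C(X)_+$ with $g(x) = 1$ and $\operatorname{supp} g \subseteq U$. Then $\emptyset \neq \{g\neq 0\} \subseteq U$, so $\Phi^{-1}[\{g\neq0\}] \subseteq \Phi^{-1}[U]$. A nonempty compact open subset of the former is therefore one of the latter. The surjectivity of $\Phi$ is not needed for nonemptiness of the compact open set, since that is supplied by the hypothesis; it only ensures $\Phi^*$ is an embedding.

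I expect the only point needing care to be the first step. There one must verify $\overline{hC(Y)h} = C_0(\{h\neq0\})$, or bypass it by working with Proposition \ref{prop:equivalentInclusion}. Using that proposition is cleaner: $g\circ\Phi \ge \lambda \mathbf{1}_K$ with $\lambda > 0$ forces $K \subseteq \Phi^{-1}[\{g\neq 0\}]$. Conversely, a clopen $K$ inside this set gives $g\circ\Phi \ge (\min_K g\circ\Phi)\mathbf{1}_K$, with the minimum positive by compactness. I will use that route, so the hereditary-subalgebra identification is not needed.
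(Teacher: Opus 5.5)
Your proposal is correct and follows essentially the same route as the paper: pull back a Urysohn-type function $g$ with $\emptyset\neq\{g\neq 0\}\subseteq U$, then identify nonzero projections in $C(Y)$ with indicators of nonempty compact open (clopen) sets lying in $\Phi^{-1}[\{g\neq 0\}]$. The only difference is that you verify membership through the order characterization of Proposition \ref{prop:equivalentInclusion}, whereas the paper uses the identification $\overline{fC(Y)f}=C_0(\{f\neq 0\})$; this is a harmless variation that is slightly more self-contained.
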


 \begin{proof}
     Suppose that  $\Phi^*( C(X)) \subseteq C(Y)$ is an SP-inclusion and let $U \subseteq X$ be a nonempty open set.
     Consider a function $0 \neq g \in C(X)_+$ with open support $\operatorname{supp}(g) := \{x \in X: g(x) \neq 0\} \subseteq U.$ 
     Since $\Phi^*$ is an injective $\ast$-homomorphism and by our choice of $g$, we have \[0 \neq \Phi^*(g) = g \circ \Phi  \in C(Y)_+.\]
    By our assumption, there exists a nonzero projection 
    \[p \in \overline{(g \circ \Phi)C(Y)(g \circ \Phi)} = C_0\left(\{y \in Y: (g \circ \Phi)(y) \neq 0\}\right).\]
    Moreover, since \[(g \circ \Phi)(y) \neq 0 \iff \Phi(y) \in \operatorname{supp}(g),\] we must have \[\{y \in Y: (g \circ \Phi)(y) \neq 0\} = \Phi^{-1}[\operatorname{supp}(g)] \subseteq \Phi^{-1}[U].\]
    By the usual correspondences, we get that our projection $p$ must be the characteristic function of a nonempty compact open subset $K \subseteq \Phi^{-1}[U] \subseteq Y$, so we are done.

    Conversely, suppose that the pre-image $\Phi^{-1}[U] \subseteq Y$ contains a nonempty compact open subset whenever $U \subseteq X$ is open. 
    We wish to show that \[\Phi^*( C(X)) \subseteq C(Y)\] is an SP-inclusion.

    Take some $0 \neq g \in C(X)_+$ so that $0 \neq g \circ \Phi = f \in \Phi^*(C(X))_+$.
    Then $U := \operatorname{supp}(g) \subseteq X$ is a nonempty open set and hence there exists some nonempty compact open set \[K \subseteq \Phi^{-1}[U] = \Phi^{-1}[\operatorname{supp}(g)] = \{y \in Y: (g \circ \Phi)(y) \neq 0\} = \{y \in Y: f(y) \neq 0\}.\]
    In particular, the characteristic function of $K$ is a nonzero projection $p = \mathbf{1}_K$ with \[ p \in C_0\left(\{y \in Y: f(y) \neq 0\}\right) = \overline{f C(Y) f},\] as required.
 \end{proof}

\section{Permanence Properties}\label{sec:permanenceProperties}

\subsection{Unitization} In the sequel, it will sometimes be easier to prove permanence of SP-inclusions under the assumption that these inclusions are unital. It turns out that we can always pass to the unital case.

\begin{notn}
    Let $A$ be a nonunital C*-algebra. 
    We let $\widetilde{A} := A \oplus \mathbb{C}$ denote the minimal unitization of $A$. 

Abusing this notation, we will sometimes simply write $A$ when we mean the canonical copy of $A$ inside $\widetilde{A}.$
In this way, we view a nonunital C*-algebra $A$ as a norm-closed two-sided ideal in $\widetilde{A}$, noting that $\widetilde{A}/A \cong \mathbb{C}$. Moreover, $A$ is an {\bf essential ideal} of $\widetilde{A}$, meaning that if $J$ is any nonzero ideal of $\widetilde{A},$ then $A \cap J \neq \{0\}.$
Equivalently, this means that for $x \in \widetilde{A}$, $xA = 0$ if and only if $x = 0.$
\end{notn}

First, we will prove that Property (SP) for C*-algebras is permanent under unitization. 

\begin{lem}\label{lem:unitization}
    Let $A$ be a nonunital C*-algebra. Then $A$ has Property (SP) if and only if its minimal unitization $\widetilde{A}$ has Property (SP). 
\end{lem}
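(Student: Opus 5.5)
The forward direction is the easier one and I would do it first. Suppose $A$ has Property (SP) and let $0 \neq x \in \widetilde{A}_+$. I will produce a nonzero projection in $\overline{x\widetilde{A}x}$ by passing into the ideal $A$. Since $A$ is an essential ideal of $\widetilde{A}$ and $x^{1/2} \neq 0$, there is some $b \in A$ with $x^{1/2} b \neq 0$. Then
\[ 0 \neq a := x^{1/2} b b^* x^{1/2} \in A_+ , \]
because $A$ is an ideal. Moreover $a \in \overline{x\widetilde{A}x}$, since $a \in x^{1/2}\widetilde{A}x^{1/2} \subseteq \overline{x\widetilde{A}x}$ (the latter holds because $x^{1/2}$ is a norm limit of elements of the form $x\,g(x)$). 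Property (SP) for $A$ gives a nonzero projection $p \in \overline{aAa} \subseteq \overline{a\widetilde{A}a}$. Since $\overline{a\widetilde{A}a}$ is the smallest hereditary subalgebra containing $a$, we get $\overline{a\widetilde{A}a} \subseteq \overline{x\widetilde{A}x}$. Hence $\widetilde{A}$ has Property (SP). Alternatively, one can simply note that $A \subseteq \widetilde{A}$ is an SP-inclusion by Theorem \ref{thm:mainSP}. However, that only handles $a \in A_+$, so the essential-ideal reduction above is what handles a general $x \in \widetilde{A}_+$.

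For the converse, suppose $\widetilde{A}$ has Property (SP) and let $0 \neq a \in A_+$. The point is that $\overline{a\widetilde{A}a}$ is a hereditary subalgebra of $\widetilde{A}$, so it contains a nonzero projection $p$. Since $a \in A$ and $A$ is an ideal, $a\widetilde{A}a \subseteq A$. Because $A$ is closed, $\overline{a\widetilde{A}a} \subseteq A$, so $p \in A$. Finally, $\overline{a\widetilde{A}a} = \overline{aAa}$, because $a\widetilde{A}a = aAa + \mathbb{C}a^2$ and $a^2 \in \overline{aAa}$ (approximate $a$ by $a\,e_\lambda$ with an approximate unit, or note $a^2 = \lim a \cdot a^{1/n}\cdots$). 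Hence $p \in \overline{aAa}$. By the Remark following Lemma \ref{lem:equivalentSP}, checking hereditary subalgebras generated by positive elements suffices, so $A$ has Property (SP).

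The only mildly delicate step is the essential-ideal argument in the forward direction. There one must ensure that the compressed element $a$ is nonzero and lies in the hereditary subalgebra generated by $x$; I would use the characterization $xA = 0 \Rightarrow x = 0$ stated in the notation above, applied to $x^{1/2}$. Everything else is routine bookkeeping with hereditary subalgebras.
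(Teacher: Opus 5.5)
Your proof is correct and follows essentially the same route as the paper. In the forward direction, essentiality of $A$ in $\widetilde{A}$ produces a nonzero positive element of $A$ inside $\overline{x\widetilde{A}x}$; the paper phrases this as $\overline{x\widetilde{A}x}\cap A$ being a nonzero hereditary subalgebra of $A$. In the converse, the paper uses transitivity of heredity through the ideal $A$ to handle arbitrary hereditary subalgebras at once, whereas you make it explicit for positive generators via $\overline{a\widetilde{A}a}=\overline{aAa}$ and then invoke the Remark; this is only a cosmetic difference.
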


\begin{proof}
    First, suppose that $A$ is a nonunital C*-algebra with Property (SP). 
    Let $0 \neq b \in \widetilde{A}_+$. 
    We wish to show that $\overline{b\widetilde{A}b}$ contains a nonzero projection. %
    
    Since $A$ is an essential ideal of $\widetilde{A}$, we have that 
    $$
    H:= \overline{b\widetilde{A}b} \cap A \neq \{0\}.
    $$
    Indeed, since $A\subseteq\widetilde{A}$ is an ideal, then $bAb\subseteq b\widetilde{A}b\cap A,$ and since $A$ is essential, $b\neq 0$ implies $Ab\neq \{0\}$, and in turn $bAb\neq \{0\}.$
    In particular, $H$ is a nonzero hereditary subalgebra of $A$. 
    Since $A$ has Property (SP), there exists a nonzero projection $$p \in H \subseteq \overline{b\widetilde{A}b}$$ as required.
    
    Conversely, let us suppose that $\widetilde{A}$, the minimal unitization of a nonunital C*-algebra $A$, has Property (SP). 
    Since $A$ is a closed two-sided ideal in $\widetilde{A}$, it is a hereditary C*-subalgebra of $\widetilde{A}$ (see e.g., \cite[Proposition II.5.1.1(ii)]{Blackadar}).
    By transitivity, if $H$ is any nonzero hereditary C*-subalgebra of $A$, it is also a hereditary C*-subalgebra of $\widetilde{A}$.
    The fact that $\widetilde{A}$ has Property (SP) means $H$ contains a nonzero projection and so $A$ also has Property (SP). 
\end{proof}
\begin{rem}
    In fact, a more general variant of  Lemma \ref{lem:unitization} holds. 
    If $A\subseteq B$ is an essential ideal, then $A$ has Property (SP) if and only if $B$ has Property (SP), by the same proof. 
    In particular, $A$ has Property (SP) if and only if its multiplier algebra $M(A)$ has Property (SP). 
\end{rem}

We will say that $1 \in A \subseteq B$ is a {\bf unital inclusion} of C*-algebras if both $A, B$ are unital and $1 = 1_A = 1_B$.

\begin{prop}\label{prop:unital}
    Let $A, B$ be nonunital C*-algebras. Then $A \subseteq B$ is an SP-inclusion if and only if $1 \in \widetilde{A} \subseteq \widetilde{B}$ is a unital SP-inclusion. 
\end{prop}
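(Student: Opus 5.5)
We prove the two implications separately, working throughout with the minimal unitizations $\widetilde{A}=A\oplus\mathbb{C}\subseteq \widetilde{B}=B\oplus\mathbb{C}$, where the inclusion is $(a,\lambda)\mapsto(a,\lambda)$. This is a unital inclusion with $1_{\widetilde A}=1_{\widetilde B}$, and $A$ sits inside $B$ as before. As in the proof of Lemma \ref{lem:unitization}, we use two facts: $A$ and $B$ are essential ideals in their unitizations, and $B$ is a hereditary C*-subalgebra of $\widetilde{B}$.

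\emph{($\Leftarrow$).} Suppose $\widetilde A\subseteq\widetilde B$ is an SP-inclusion, and let $0\neq a\in A_+\subseteq\widetilde A_+$. By hypothesis, $\overline{a\widetilde B a}$ contains a nonzero projection $p$. Since $B$ is an ideal of $\widetilde B$, we have $a\widetilde B a\subseteq B$. Writing $\widetilde b=b+\lambda 1$, we get $a\widetilde b a=aba+\lambda a^2$. Both terms lie in $\overline{aBa}$, since $a^2=\lim a e_i a$ for an approximate unit $(e_i)$ of $B$. Hence $\overline{a\widetilde B a}=\overline{aBa}$, and $p$ is the required projection.

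\emph{($\Rightarrow$).} Suppose $A\subseteq B$ is an SP-inclusion, and let $0\neq x\in\widetilde A_+$. Because $A$ is essential in $\widetilde A$, we have $xAx\neq\{0\}$; more precisely, $x\neq0$ forces $Ax\neq0$, hence $x a^*\neq 0$ for some $a\in A$, and then $(xa^*)(xa^*)^*=x a^* a x\neq 0$. Set $c:=x a^*a x\in A_+$, which is nonzero. The SP-inclusion hypothesis then gives a nonzero projection $p\in\overline{cBc}$. Since $cBc=x(a^*a x B x a^*a)x\subseteq x\widetilde B x$, we conclude $p\in\overline{x\widetilde B x}$.

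The step needing the most care is this last one: choosing an element of $A_+$ whose hereditary subalgebra in $B$ sits inside $\overline{x\widetilde Bx}$. This is handled by sandwiching, $c=x(a^*a)x$, together with the ideal property. As a fallback, one can instead use Proposition \ref{prop:equivalentInclusion}. From $c\ge\mu p$ and $c\le\|a\|^2x^2$ we get $x^2\ge \mu\|a\|^{-2}p$. Then $p\in\overline{x^2\widetilde Bx^2}=\overline{x\widetilde Bx}$, by the hereditary property.
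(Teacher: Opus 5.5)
Your proof is correct and follows the paper's argument. In the forward direction you use essentiality of $A$ in $\widetilde{A}$ to produce the same element $c = xa^*ax \in A_+$ (the paper's $xyy^*x$ with $y = a^*$), and in the reverse direction you show $\overline{a\widetilde{B}a} = \overline{aBa}$. The only difference is that you verify the containments by direct computation (the sandwich $cBc = x(a^*axBxa^*a)x$, and an approximate unit to get $a^2 \in \overline{aBa}$), whereas the paper invokes minimality of $\overline{a\widetilde{B}a}$ among hereditary subalgebras of $\widetilde{B}$ containing $a$.
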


\begin{proof}
    First, suppose that $A$ and $B$ are nonunital C*-algebras such that $A \subseteq B$ is an SP-inclusion.
    We take their minimal unitizations $\widetilde{A}$ and $\widetilde{B}$ respectively, with the same unit so that $1 \in \widetilde{A} \subseteq \widetilde{B}$ is a well-defined unital inclusion.
    Let $0 \neq x \in \widetilde{A}_+$. 
    We wish to show that $\overline{x\widetilde{B}x}$ contains a nonzero projection. 

    First, since $A$ is an  essential ideal in $\widetilde{A}$, we have that
    $xA \neq \{0\},$ so there exists some $y \in A$ such that $xy \neq 0$.
    In particular, $$0 \neq a:= xyy^*x \in A_+$$ and $a \in xAx \subseteq \overline{x\widetilde{B}x}.$ 
    
    Now, by the assumption that $A \subseteq B$ is an SP-inclusion, there exists a nonzero projection $$p \in \overline{aBa} \subseteq \overline{a \widetilde{B}a} \subseteq \overline{x\widetilde{B}x}, $$ where the final containment comes from the fact that $\overline{a \widetilde{B}a}$ is the smallest hereditary subalgebra of $\widetilde{B}$ containing $a$.
    So $\widetilde{A} \subseteq \widetilde{B}$ is an SP-inclusion.

    Conversely, let us suppose that $\widetilde{A} \subseteq \widetilde{B}$ is an SP-inclusion. 
    That is, for any $0 \neq a \in A_+ \subseteq \widetilde{A}_+,$ there is a nonzero projection $p \in \overline{a\widetilde{B}a}.$
    Recall (cf. the proof of Lemma \ref{lem:unitization}) that $B$ is a hereditary subalgebra of $\widetilde{B}$.
    Since \[a \in \overline{aBa} \subseteq \overline{a \widetilde{B}a}\] and we know that $\overline{a\widetilde{B}a}$ is the smallest hereditary subalgebra of $\widetilde{B}$ containing $a$, we must have \[\overline{a\widetilde{B}a} = \overline{aBa}.\] In particular, we have \[ 0 \neq p \in  \overline{aBa},\]
    hence $A \subseteq B$ is an SP-inclusion, as required.
\end{proof}

\begin{rem}
    Proposition \ref{prop:unital} generalizes as follows.
    If $A_0, B_0$ are essential ideals of $A, B$ respectively and the following diagram commutes 

    \begin{center}
    \begin{tikzcd} %
    A_0 \arrow[d, phantom, sloped, "\subset"]\arrow[r, phantom, sloped, "\subseteq"] & B_0\arrow[d, phantom, sloped, "\subset"]\\
    A\arrow[r, phantom, sloped, "\subseteq"] & B
    \end{tikzcd}
    \end{center}
    then $A_0\subseteq B_0$ is an SP-inclusion if and only if $A\subseteq B$ is an SP-inclusion, by the same proof. 
    In particular, we can take multiplier algebras to unitize {\bf nondegenerate} SP-inclusions: if $ \overline{A_0B_0} = \overline{B_0A_0} = B_0$, then $A_0 \subseteq B_0$ is an SP-inclusion if and only if $M(A_0) \subseteq M(B_0)$ is a unital SP-inclusion.
\end{rem}

We would also like to consider the ``mixed'' case of inclusions $A \subseteq B$, where only one of $A, B$ is unital. 

If $A$ is nonunital, $B$ is unital, and $A \subseteq B$, then we can view the unitization $\widetilde{A}$ as a subalgebra of $B$ by identifying $\widetilde{A}$ with $C^*(A, 1_B) \subseteq B$ via $$a + \lambda\cdot 1 \mapsto a + \lambda\cdot 1_B.$$ 
In this case, it is immediate from the proof of Proposition \ref{prop:unital} that $A \subseteq B$ is an SP-inclusion if and only if $1_B \in C^*(A, 1_B) \subseteq B$ is a unital SP-inclusion. 

If $A$ is unital, $B$ is nonunital, and $A \subseteq B$, then we cannot simply pass to $A \subseteq \widetilde{B}$, as the latter is not typically a unital inclusion. 
Instead, notice that we have $1_A \in A \subseteq B$. 

To avoid any implication that $1_A$ is a unit in $B$ (which we emphasize is a nonunital C*-algebra), let us write $p := 1_A \in A \subseteq B$. 
It is clear that $p$ is a projection in $B$ and that $A = pAp \subseteq pBp$ is a unital inclusion. 
For every $0 \neq a \in A_+$, we have $ap = pa = a$ and so for any $b \in B$ we have $$aba = (ap)b(pa) = a(pbp)a.$$
It follows that $$\overline{aBa} = \overline{a(pBp)a}.$$
So, in this case, by arguing as above, we get that $A \subseteq B$ is an SP-inclusion if and only if $p \in A \subseteq pBp$ is a unital SP-inclusion. 

Having exhausted all possible cases, we have shown that we can always assume that our SP-inclusions are unital SP-inclusions, without loss of generality.

\subsection{Hereditary Subalgebras}
We study the interplay between SP-inclusions and intermediate hereditary C*-subalgebras. The statements and proofs from this section closely follow those of 
 \cite[\S3]{GN25} in the context or RR0-inclusions. 

\begin{prop}\label{prop:hereditary}
    Let $A \subseteq B$ be an inclusion of C*-algebras and let $H$ be a hereditary subalgebra of $B$ such that $A \subseteq H$. If $A \subseteq B$ is an SP-inclusion, then so is $A \subseteq H$.
\end{prop}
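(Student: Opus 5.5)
The plan is to verify the definition directly: for each nonzero $a \in A_+$ we must find a nonzero projection in $\overline{aHa}$, the hereditary subalgebra of $H$ generated by $a$. The key observation is that $\overline{aHa} = \overline{aBa}$. Once this holds, the SP-inclusion hypothesis on $A \subseteq B$ supplies a nonzero projection $p \in \overline{aBa} = \overline{aHa}$, and we are done.

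First, the inclusion $\overline{aHa} \subseteq \overline{aBa}$ is immediate from $H \subseteq B$. For the reverse inclusion, take $b \in B$. Since $a \in A \subseteq H$ and $H$ is hereditary in $B$, we have $aBa \subseteq H$; this is the standard characterization of hereditary subalgebras, namely $hBh' \subseteq H$ for $h,h' \in H$, as in \cite[Proposition II.3.4.2]{Blackadar}.

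Next we show $aba \in \overline{aHa}$. One way is to approximate $a$ by $a^{1/3}$-factorizations: $aba = a^{1/3}(a^{1/3} b a^{1/3})a^{1/3}$, where $a^{1/3} \in A \subseteq H$ by functional calculus. The middle term $a^{1/3}ba^{1/3}$ lies in $H$ by the hereditary property, and $a^{1/3} = \lim a\, f_n(a)$ for suitable continuous $f_n$. Hence $aba$ is a limit of elements of $aHa$, so $aBa \subseteq \overline{aHa}$ and, taking closures, $\overline{aBa} \subseteq \overline{aHa}$.

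Alternatively, one can avoid the approximation by noting that $\overline{aBa}$ is hereditary in $B$ and contained in $H$, hence hereditary in $H$. Since it contains $a$, it contains the smallest hereditary subalgebra of $H$ containing $a$, namely $\overline{aHa}$; this gives only the easy direction again. The cleaner route is the approximation above, or equivalently Proposition \ref{prop:equivalentInclusion}: pick $p \in \Proj(B)$ and $\lambda>0$ with $\lambda p \le a$. Then $0 \le p \le \lambda^{-1}a \in H$ forces $p \in H$, and condition (ii) of Proposition \ref{prop:equivalentInclusion} holds for $A \subseteq H$. I would present this last argument as the main proof, since it is the shortest. The only point needing care, which is mild, is that the order relation $\lambda p \le a$ is the same whether computed in $B$ or in $H$.
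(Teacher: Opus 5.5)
Your proof is correct. Your preferred main argument, via Proposition~\ref{prop:equivalentInclusion}, takes a different route from the paper.

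\textbf{Your first route.} It establishes the same identity $\overline{aHa}=\overline{aBa}$ that the paper uses. The paper, however, gets the nontrivial containment $\overline{aBa}\subseteq\overline{aHa}$ structurally, not by approximation. It implicitly uses that $\overline{aHa}$, being hereditary in $H$, is hereditary in $B$ by transitivity. Since $\overline{aHa}$ contains $a$, it contains the smallest hereditary subalgebra of $B$ containing $a$, namely $\overline{aBa}$. This is the mirror image of the ``alternative'' you dismissed as giving only the easy direction: you applied minimality inside $H$ rather than inside $B$. Your $a^{1/3}$-factorization does the same job by hand and is fine, since $f_n(a)\,m\,f_n(a)\in H$ because $C^*(a)\subseteq A\subseteq H$.

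\textbf{Your main route.} The argument through Proposition~\ref{prop:equivalentInclusion} is also valid. Hereditarity of $H$ and $0\le p\le\lambda^{-1}a\in H$ put $p$ in $H$. By spectral permanence, $a-\lambda p\in H$ is positive in $H$ exactly when it is positive in $B$, so (ii) holds for $A\subseteq H$. The easy implication (ii)$\Rightarrow$(i) then finishes.

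\textbf{What each buys.} Your route never needs to identify $\overline{aHa}$. The cost is that it rests on the nontrivial implication (i)$\Rightarrow$(ii) of Proposition~\ref{prop:equivalentInclusion}. Its proof, omitted in the paper, goes through open and support projections in $B^{**}$. So your route is ``shortest'' only modulo that machinery, whereas the paper's two-line argument is purely formal.
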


\begin{proof}
    Take $0 \neq a \in A_+$ so that $\overline{aBa}$ contains a projection.
    It is clear that $\overline{aHa} \subseteq \overline{aBa}$.
    On the other hand, since $\overline{aBa}$ is the smallest hereditary subalgebra of $B$ containing $a$ we have $\overline{aBa} \subseteq \overline{aHa}$. 
    By assumption, this means that there is a nonzero projection $p \in \overline{aBa} = \overline{aHa}.$
    In other words, $A \subseteq H$ is an SP-inclusion.
\end{proof}

\begin{rem}
   The assumption in Proposition \ref{prop:hereditary} that $H\subseteq B$ is hereditary is necessary. 
   Indeed, we have seen SP-inclusions $A\subseteq B,$ where $A$ does not have Property (SP), such as Example \ref{exmp:irrationalRotation}.
   In such cases, $A \subseteq A$ is not an SP-inclusion.
\end{rem}

\begin{cor}
    Let $A \subseteq B$ be an inclusion of C*-algebras and let $0 \neq a \in A_+$. If $A \subseteq B$ is an SP-inclusion, then so is the induced inclusion $\overline{aAa} \subseteq \overline{aBa}$.
\end{cor}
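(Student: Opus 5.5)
The plan is to verify the definition of an SP-inclusion directly for $\overline{aAa} \subseteq \overline{aBa}$, rather than passing through Proposition \ref{prop:hereditary}. There is a small subtlety: $\overline{aBa}$ contains $\overline{aAa}$, but it is not obvious that it contains all of $A$. So we cannot simply apply Proposition \ref{prop:hereditary} with $H = \overline{aBa}$. Instead, we check the condition element by element.

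Fix a nonzero $c \in \overline{aAa}_+$. Since $A \subseteq B$ is an SP-inclusion and $0 \neq c \in A_+$, there is a nonzero projection $p \in \overline{cBc}$. It then suffices to show that
\[
\overline{cBc} = \overline{c\,\overline{aBa}\,c}.
\]
This is the same argument as in the proof of Proposition \ref{prop:hereditary}. Because $c \in \overline{aAa} \subseteq \overline{aBa}$ and $\overline{aBa}$ is hereditary in $B$, the set $\overline{c\,\overline{aBa}\,c}$ is a hereditary subalgebra of $\overline{aBa}$. Being hereditary is transitive, so it is also hereditary in $B$, and it contains $c$. Since $\overline{cBc}$ is the smallest hereditary subalgebra of $B$ containing $c$, we get $\overline{cBc} \subseteq \overline{c\,\overline{aBa}\,c}$. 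The reverse inclusion is trivial, because $\overline{aBa} \subseteq B$.

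An alternative route is to invoke Proposition \ref{prop:equivalentInclusion}. It gives a nonzero projection $p \in B$ and $\lambda > 0$ with $\lambda p \leq c \in \overline{aBa}$. Hereditariness of $\overline{aBa}$ then forces $p \in \overline{aBa}$, which yields condition (ii) for the inclusion $\overline{aAa} \subseteq \overline{aBa}$. Either route is essentially routine. The only real obstacle is the point noted at the start: one must check the SP condition on the smaller algebra $\overline{aAa}$, and not rely on a containment $A \subseteq \overline{aBa}$ that need not hold.
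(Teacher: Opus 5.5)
Your argument is correct and is essentially the paper's proof written out in full. The paper handles the obstacle you flag by first noting that $\overline{aAa}\subseteq B$ is trivially an SP-inclusion, since positive elements of $\overline{aAa}$ are positive elements of $A$, and then applying Proposition \ref{prop:hereditary} with $H=\overline{aBa}$; that proposition's proof is exactly your identity $\overline{cBc}=\overline{c\,\overline{aBa}\,c}$, so Proposition \ref{prop:hereditary} does apply directly once $A$ is replaced by $\overline{aAa}$.
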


\begin{proof}
    Since $A \subseteq B$ is an SP-inclusion, so is the inclusion $\overline{aAa} \subseteq B$. By Proposition \ref{prop:hereditary}, $\overline{aAa} \subseteq \overline{aBa}$ is an SP-inclusion.
\end{proof}

\subsection{Inclusions and Quotients}
In this section, we see that SP-inclusions are permanent under taking ``larger'' inclusions. 
We will also see a non-permanence result which distinguishes SP-inclusions from RR0-inclusions; namely, we will see that we cannot pass to quotients.

\begin{prop}\label{prop:superalg}
    Let $A \subseteq B$ be an SP-inclusion. If $B \subseteq D$ is any inclusion of C*-algebras, then $A \subseteq D$ is an SP-inclusion.
\end{prop}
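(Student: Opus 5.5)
The argument is a direct containment of hereditary subalgebras, in the same way as the proof of Theorem \ref{thm:mainSP}. Fix a nonzero element $a \in A_+$. Since $A \subseteq B$ is an SP-inclusion, Definition \ref{defn:relativeSP} supplies a nonzero projection $p \in \overline{aBa}$. The task is then to show that $p$ also lies in $\overline{aDa}$, the hereditary C*-subalgebra of $D$ generated by $a$.

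The key step is the containment $\overline{aBa} \subseteq \overline{aDa}$. Because $B \subseteq D$, every element $aba$ with $b \in B$ is also of the form $axa$ with $x = b \in D$. Hence $aBa \subseteq aDa$, and taking norm closures inside $D$ (where $B$ sits as a closed subalgebra) gives $\overline{aBa} \subseteq \overline{aDa}$. Therefore $p$ is a nonzero projection in $\overline{aDa}$. Since $a$ was arbitrary, $A \subseteq D$ is an SP-inclusion.

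As an alternative, Proposition \ref{prop:equivalentInclusion} gives the same conclusion. Condition (ii) for $A \subseteq B$ provides $\lambda > 0$ and $0 \neq p \in \Proj(B)$ with $a \geq \lambda p$ in $B$. Since the order on $B$ agrees with the order it inherits from $D$, we have $a \geq \lambda p$ in $D$, and $p \in \Proj(D)$. This is condition (ii) for $A \subseteq D$.

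There is no real obstacle here. The only point needing care is that the closure of $aBa$ is the same whether it is taken in $B$ or in $D$, and this holds because $B$ is norm-closed in $D$.
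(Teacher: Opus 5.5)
Your proposal is correct and follows the paper's proof: the paper's argument is the same one-line containment $p \in \overline{aBa} \subseteq \overline{aDa}$ for any nonzero $a \in A_+$. Your alternative via Proposition \ref{prop:equivalentInclusion} is also valid, but it is not needed.
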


\begin{proof}
    This follows from the fact that for any $0 \neq a \in A_+$, there exists a nonzero projection $p \in \overline{aBa} \subseteq \overline{aDa}.$
\end{proof}

To prove the following result, we will use the notion of residual (SP) for C*-algebras. 
We say that a C*-algebra $A$ has {\bf residual (SP)} if $A/I$ has Property (SP) for every closed two-sided ideal $I \subseteq A$. 
This notion was introduced by Pasnicu and Phillips, who showed \cite[Example 7.2]{MR3352760} that Property (SP) is strictly weaker than residual (SP).

\begin{prop}\label{prop:extension}
    Let $A \subseteq B$ be an SP-inclusion. Let $I$ and $J$ be closed two-sided ideals of $A$ and $B$, respectively, with $I \subseteq J$. The induced inclusion $A/I \subseteq B/J$ need not be an SP-inclusion. 
\end{prop}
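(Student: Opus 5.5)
We need a counterexample: an SP-inclusion $A\subseteq B$ and ideals $I\subseteq J$ such that $A/I\subseteq B/J$ is not an SP-inclusion. The paper has just introduced residual (SP) and cited Pasnicu--Phillips \cite[Example 7.2]{MR3352760}, which exhibits a C*-algebra $C$ with Property (SP) but without residual (SP). So we take $A = B = C$.

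\textbf{Step 1.} Since $C$ has Property (SP), Theorem \ref{thm:mainSP} shows that $C\subseteq C$ is an SP-inclusion.

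\textbf{Step 2.} Because $C$ fails residual (SP), there is a closed two-sided ideal $K\subseteq C$ such that $C/K$ does not have Property (SP). Set $I=J=K$; then certainly $I\subseteq J$.

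\textbf{Step 3.} The induced inclusion is $C/K\subseteq C/K$. By the ``moreover'' part of Theorem \ref{thm:mainSP}, this is an SP-inclusion if and only if $C/K$ has Property (SP). Hence it is not an SP-inclusion, which proves the claim.

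The only real dependence is on the existence of an (SP) algebra lacking residual (SP). If one wanted a self-contained example instead, the commutative picture also works: take $X$ with a dense set of isolated points, so $C(X)$ has (SP) by Remark \ref{rem:isolatedPoints}, and a closed subset such as $[0,1]\times\{0\}$ in the metrizable example. Restricting to that closed subset gives the quotient $C([0,1])$, which has no nonzero projections. With $A=B=C(X)$ and $I=J$ the ideal of functions vanishing on the closed set, the quotient inclusion $C([0,1])\subseteq C([0,1])$ is not an SP-inclusion. This second route is fully explicit and avoids citing Pasnicu--Phillips, so I would present it, possibly alongside the citation. The main step to check carefully is that the quotient of $C(X)$ by that ideal is indeed $C([0,1]\times\{0\})$; this is standard by Tietze extension.
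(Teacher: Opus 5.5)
Your Steps 1--3 are exactly the paper's proof. Take an algebra with Property (SP) but without residual (SP), let $A=B$ be that algebra and $I=J$ an ideal whose quotient fails (SP), and apply Theorem \ref{thm:mainSP} in both directions.

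Your second, commutative route is also correct, and it is genuinely more self-contained. It uses the compact metrizable example following Remark \ref{rem:isolatedPoints} together with that remark. This gives an explicit separable commutative C*-algebra with Property (SP) but without residual (SP). The counterexample then no longer depends on \cite[Example 7.2]{MR3352760}, and it shows that SP-inclusions already fail to pass to quotients in the commutative setting. The identification $C(X)/C_0(X\setminus F)\cong C(F)$ for closed $F\subseteq X$ is indeed standard by Tietze.

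One statement needs correcting. $C([0,1])$ does contain a nonzero projection, namely its unit, so ``has no nonzero projections'' is false as written. What you need is that $C([0,1])$ fails Property (SP). Take $0\neq a\in C([0,1])_+$ whose support is a proper open subset $U\subsetneq[0,1]$. Then $\overline{aC([0,1])a}=C_0(U)$. A nonzero projection there would be the indicator function of a nonempty compact open subset of $U$. That set would be a nonempty proper clopen subset of the connected space $[0,1]$, which is impossible. With this fix, the ``moreover'' part of Theorem \ref{thm:mainSP} finishes the argument exactly as in your Step 3.
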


\begin{proof}
    Let $A$ be any C*-algebra with Property (SP) but not having residual (SP).
    Let $I$ be a closed two-sided ideal of $A$ such that $A/I$ does not have Property (SP). 
    Then $A \subseteq A$ is an SP-inclusion but the induced inclusion $A/I \subseteq A/I$ is not, by Theorem \ref{thm:mainSP}.
\end{proof}

\begin{rem}
    If we replace ``SP-inclusion'' with ``RR0-inclusion'' in the hypotheses of Proposition \ref{prop:extension}, then the induced inclusion \emph{is} a RR0-inclusion \cite[Proposition 3.4]{GN25}, hence it is an SP-inclusion.
\end{rem}

\subsection{Inductive Limits} Whereas many nice examples of C*-algebras with Property (SP) are constructed as inductive limits (e.g., Goodearl algebras \cite{Goodearl} and AF-algebras), it is interesting to consider the permanence of SP-inclusions under such limits. 

\begin{lem}\label{lem:directLimSP}
    Let $(A_\lambda)_{\lambda \in \Lambda}$ be a directed system of C*-algebras with injective $\ast$-homomorphisms $\varphi_{\lambda, \mu}: A_\lambda \to A_\mu$ for $\lambda \leq \mu \in \Lambda$, inductive limit $A = \underrightarrow\lim A_\lambda$, and canonical maps $\varphi_{\lambda}: A_\lambda \to A$. If there exists some $\lambda_0 \in \Lambda$ such that $\phi_{\lambda, \mu} (A_\lambda) \subseteq A_\mu$ is an SP-inclusion for all $\mu > \lambda > \lambda_0$, then $\varphi_{\lambda} (A_\lambda) \subseteq A$ is an SP-inclusion for all $\lambda > \lambda_0$.
\end{lem}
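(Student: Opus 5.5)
The plan is to push a projection found at a finite stage forward to the limit. Fix $\lambda > \lambda_0$ and a nonzero $a \in \varphi_\lambda(A_\lambda)_+$. Since the connecting maps are injective, each canonical map $\varphi_\lambda$ is injective; hence $a = \varphi_\lambda(a_0)$ for a unique $a_0 \in (A_\lambda)_+$, and $a_0 \neq 0$. Because $\Lambda$ is directed and, as the hypothesis implicitly requires, contains elements strictly above $\lambda$, choose some $\mu > \lambda$. By hypothesis, $\varphi_{\lambda,\mu}(A_\lambda) \subseteq A_\mu$ is an SP-inclusion.

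Next I would apply Proposition \ref{prop:equivalentInclusion} to the element $\varphi_{\lambda,\mu}(a_0)$, which is nonzero and positive by injectivity. This gives a nonzero projection $p \in \Proj(A_\mu)$ and a scalar $\delta > 0$ such that
\[ \varphi_{\lambda,\mu}(a_0) \geq \delta p \quad \text{in } A_\mu. \]
Alternatively, one can work directly with the hereditary algebra: the definition supplies $p \in \overline{\varphi_{\lambda,\mu}(a_0) A_\mu \varphi_{\lambda,\mu}(a_0)}$.

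Now apply $\varphi_\mu$. It is a $\ast$-homomorphism, hence positive, and it satisfies $\varphi_\mu \circ \varphi_{\lambda,\mu} = \varphi_\lambda$. Therefore
\[ a = \varphi_\lambda(a_0) \geq \delta\, \varphi_\mu(p) \quad \text{in } A. \]
Moreover, $\varphi_\mu(p)$ is a projection, and it is nonzero because $\varphi_\mu$ is injective. By Proposition \ref{prop:equivalentInclusion}, applied this time to $\varphi_\lambda(A_\lambda) \subseteq A$, we conclude that this inclusion is an SP-inclusion. If one prefers the hereditary formulation, the same conclusion follows because
\[ \varphi_\mu\bigl(\overline{\varphi_{\lambda,\mu}(a_0)A_\mu\varphi_{\lambda,\mu}(a_0)}\bigr) \subseteq \overline{aAa}. \]

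The only point needing care is injectivity of the canonical maps $\varphi_\mu$, since it is what guarantees that the projection stays nonzero in $A$. I would justify it with the standard fact that the limit norm satisfies $\|\varphi_\mu(x)\| = \lim_\nu \|\varphi_{\mu,\nu}(x)\|$, and this equals $\|x\|$ because injective $\ast$-homomorphisms are isometric. Apart from this, the argument is routine; the assumption $\mu > \lambda > \lambda_0$ is used only to select a single stage beyond $\lambda$.
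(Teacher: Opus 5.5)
Your proposal is correct and is essentially the paper's argument. The paper identifies each $A_\lambda$ with a subalgebra of $A$ (so $A_\lambda \subseteq A_\mu \subseteq A$) and invokes Proposition \ref{prop:superalg}; that is exactly the push-forward of the projection along $\varphi_\mu$ that you carry out by hand. Your caveat that some $\mu > \lambda$ must exist (if $\lambda$ were maximal the hypothesis would be vacuous) is fair, and the paper's proof relies on the same implicit assumption.
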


\begin{proof}
     We may assume (by e.g., \cite[Paragraph II.8.2.1]{Blackadar}) that the algebras $A_\lambda$ are all subalgebras of $A$, with $A_\lambda \subseteq A_\mu$ whenever $\lambda \leq \mu \in \Lambda$ and that $\displaystyle A = \overline{\cup_{\lambda \in \Lambda} A_\lambda}$. Then the result follows immediately from Proposition \ref{prop:superalg}. 
\end{proof}

The converse of Lemma \ref{lem:directLimSP} does not hold. 
In particular, given a direct family of C*-algebras algebras $(A_\lambda)_{\lambda \in \Lambda}$ as above,  we can have that $A_{\lambda} \subseteq A$ is an SP-inclusion for all $\lambda \in \Lambda$, but $A_{\lambda} \subseteq A_{\mu}$ is not an SP-inclusion for \emph{any} $\lambda \leq \mu \in \Lambda$.

\begin{exmp}\label{exmp:Goodearl}
    Let $X := [0,1]$, $k_n := (n!)^2$, $\ell_n = 1$, and let $\{q_n\}_{n=1}^{\infty}$ be an enumeration of $\mathbb{Q} \cap [0,1]$. 
    Let $A$ be the {\bf Goodearl algebra} constructed from this data. 
    That is, let $A$ be the inductive limit of the sequence 
    \[ A_{1} := C(X, M_{k_1}) \xrightarrow{\varphi_1} A_2 := C(X, M_{k_2}) \xrightarrow{\varphi_2} \cdots, \]
    where $\varphi_n$ is the unital $\ast$-homomorphism given by
    \[ \varphi_n(a)(x) = \operatorname{diag}(a(q_n), a(x), \ldots, a(x)),\]
    for $x \in [0,1]$ and $a \in C(X, M_{k_n})$. 
     For $n \leq m,$ write $\varphi_{n,m} := \varphi_{m-1} \circ \varphi_{m-2} \circ \cdots \circ \varphi_n$ for the $\ast$-homomorphism that takes a copy of $A_n$ into $A_m$ and write $\varphi_{n,\infty}$ for the map taking a copy of $A_n$ into the inductive limit $A$.
    Since $\cup_{n=k}^\infty \{q_n\}$ is dense in $[0,1]$ for each $k$, $A$ has Property (SP) by \cite[Theorem 5]{Goodearl} and so each 
    $$
    \varphi_{n, \infty}(A_{n}) \subseteq A
    $$
    is an SP-inclusion by Theorem \ref{thm:mainSP}. 

    Fix an $n$ and let us first consider the inclusion 
    $$
    \varphi_n(A_n) \subseteq A_{n+1}.
    $$
    Consider a nonzero function $f \in C([0,1])_+$ such that $f(q_n) = 0$ and $V:= \operatorname{supp}(f) = \{x \in X: f(x) \neq 0\}$ is open in the relative topology of $[0,1]$.  
    Let $a(x) := \operatorname{diag}(f(x), 0, \ldots, 0) \in A_n$.
    Then $0 \neq \varphi_n(a) \in \varphi_n (A_n)_{+}$ and $\operatorname{supp}(\varphi_n(a)) = V$. 
    
    Suppose by way of contradiction that $\varphi_n(A_n) \subseteq A_{n+1}$ is an SP-inclusion.
    In particular, suppose that there exists some $0 \neq p \in \Proj({\overline{\varphi_n(a)A_{n+1}\varphi_n(a)}}).$
    By definition of $a$ and $\varphi_n$, we have $p(x) = 0$ whenever $x \notin V$.
    Now fix some $x_0 \in V$. 
    Let $I \subseteq V$ be the connected component of $x_0$.
    Recall that $I \subseteq [0,1]$ is an interval with $\partial_{[0,1]} I \subseteq [0,1] \setminus V$. 
   Since $x \mapsto \norm{p(x)}$ is a continuous map which takes values in $\{0, 1\}$, it is constant on any connected set.
    In particular, $\norm{p|_I(x)} = 1$ for all $x \in I$,  since we chose $x_0$ so that $p(x_0) \neq 0$.
    Now, suppose that $(x_\lambda) \subseteq I$ is a net converging to $y \in \partial_{[0,1]} I$. 
    Then $\norm{p(x_\lambda)} \to \norm{p(y)}$, but $\norm{p(x_\lambda)} = 1$ for all $x_\lambda \in I$ and $\norm{p(y)} = 0$ since $y \in [0,1] \setminus V$, a contradiction.
    So, $\varphi_n(A_n) \subseteq A_{n+1}$ is not an SP-inclusion.
    
    The fact that 
    $$
    \varphi_{n,m}(A_n) \subseteq A_{m}
    $$
    is not an SP-inclusion for any $m \geq n+1$ follows by a similar argument, taking $0 \neq f \in C([0,1])_+$ such that $f(q_i) = 0$ for each $i \in \{n, \ldots, m-1\}$ and $V:= \operatorname{supp}(f)$ is open.
    
     Moreover, we can show that \emph{none} of the inclusions in this construction are RR0-inclusions.
     
    First, it is immediate that none of the finite-level inclusions are RR0-inclusions, because none of them are SP-inclusions.
     It remains to consider the inclusions
     $$
     \varphi_{n,\infty}(A_n)\subseteq A.
     $$
     Since $X=[0,1]$ is not totally disconnected and \[\gamma := \prod_{j=1}^{\infty}\frac{k_{j+1} - k_j \ell_j}{k_{j+1}} = \prod_{j=1}^{\infty} \frac{\left((j+1)!\right)^2 - (j!)^2}{\left((j+1)!\right)^2} = \prod_{j=1}^{\infty} \left(1 - \frac{1}{(j+1)^2}\right) > 0,\] it follows from \cite[Theorem 6]{Goodearl} that the linear span of projections in $A$ is not dense in $A$ and, as such, $A$ does not have real rank zero. 
     In particular, since $X$ is not totally disconnected, there exist distinct points $y, z \in X$ and a continuous function $g: X \to [0,1]$ such that $g(y)=0$ and $g(z) = 1$.
     Notice that $g\in A_1.$
     In the proof of \cite[Theorem 6]{Goodearl}, Goodearl shows that $\varphi_{1, \infty}(g) \in A$ cannot be approximated within $\gamma/4$ by a linear combination of projections from $A$. 
     In particular, $\varphi_{1, \infty}(g)$ is a self-adjoint element of $\varphi_{1, \infty}(A_1)$ which cannot be approximated arbitrarily well in $A$ by a self-adjoint element with finite spectrum. 
     By \cite[Theorem 1.7]{GN25}, it follows that $\varphi_{1, \infty}(A_1) \subseteq A$ is not a RR0-inclusion. 
     
     But there is nothing special about taking $n=1$ in Goodearl's argument.
     For $n > 1$, define \[\gamma_n := \prod_{j=n}^\infty \frac{k_{j+1} - k_j\ell_j}{k_{j+1}}.\]
     Taking 
     $a := \operatorname{diag}(g, g, \ldots, g) \in A_n$ and following the proof of \cite[Theorem 6]{Goodearl}, we see
     that $\varphi_{n, \infty}(a)$ cannot be approximated within $\gamma_n/4$ by linear combinations of projections in $A$. 
     Since $\gamma_n \geq \gamma > 0$ for all $n > 1, $ then in particular $\varphi_{n, \infty}(a)$ cannot be approximated within $\gamma/4$ by linear combinations of projections in $A$ and so $\varphi_{n, \infty}(A_n) \subseteq A$ is not a RR0-inclusion.
\end{exmp}

Next, we will consider the setting of simultaneous towers of algebras, and establish a sufficient condition to obtain an SP-inclusion at the limits. 
In order to do so, we will need a technical lemma, which will allow us to approximate positive elements in a C*-algebra by positive elements from a dense subalgebra.
The content of the lemma is well-known in the context of inductive limits of C*-algebras, but we were unable to locate a proof in the standard references. 
The result we establish is (formally) more general.

\begin{lem}\label{lem:positiveApprox}
    Let $A_0 \subseteq A$ be an inclusion of C*-algebras, let $0 \neq a \in A_+$, and take $\varepsilon \in (0, \norm{a}]$. If there exists some $0 \neq y \in A_0$ with $\norm{y-a} < \varepsilon$ then there exists $0 \neq x \in (A_0)_+$ such that $\norm{x-a} < 2\varepsilon.$
\end{lem}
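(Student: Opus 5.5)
We first symmetrize and then pass to a positive element. Given $0 \neq y \in A_0$ with $\norm{y-a} < \varepsilon$, set $h := \frac{1}{2}(y+y^*) \in A_0$. Since $a = a^*$, we have
\[
\norm{h-a} = \tfrac{1}{2}\norm{(y-a) + (y-a)^*} \leq \norm{y-a} < \varepsilon,
\]
so $h$ is a self-adjoint element of $A_0$ within $\varepsilon$ of $a$. Next let $x := h_+$, the positive part of $h$, obtained by continuous functional calculus. Since $A_0$ is a C*-algebra, $x \in (A_0)_+$.

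To estimate $\norm{x-a}$, write $x - a = (h - a) + h_-$, where $h_-$ is the negative part of $h$, so that $h = h_+ - h_-$. The first term has norm less than $\varepsilon$. For the second term we use positivity of $a$: since $h \geq a - \norm{h-a}\cdot 1 \geq -\norm{h-a}\cdot 1$ (computed in the unitization if needed), the spectrum of $h$ is contained in $[-\norm{h-a}, \infty)$. Hence
\[
\norm{h_-} = \max\bigl(0, -\min \sigma(h)\bigr) \leq \norm{h-a} < \varepsilon .
\]
The triangle inequality then gives $\norm{x-a} < 2\varepsilon$.

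It remains to check that $x \neq 0$, and this is the one place where the hypothesis $\varepsilon \leq \norm{a}$ enters. If $x = h_+ = 0$, then $h = -h_- \leq 0$. For any state $\phi$ with $\phi(a) = \norm{a}$, which exists since $a \geq 0$, we would get
\[
\norm{a} = \phi(a) \leq \phi(a - h) \leq \norm{a-h} < \varepsilon \leq \norm{a},
\]
a contradiction. Thus $0 \neq x \in (A_0)_+$ with $\norm{x-a} < 2\varepsilon$. I expect no step to be a real obstacle; the only delicate point is the spectral bound on $h_-$, which is where positivity of $a$ is used.
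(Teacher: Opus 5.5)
Your proof is correct and follows the same route as the paper: symmetrize to $h=\frac{1}{2}(y+y^*)$, take $x=h_+$, and use $h\geq a-\norm{h-a}\cdot 1\geq -\norm{h-a}\cdot 1$ to get $\norm{h_-}\leq\norm{h-a}$, hence $\norm{x-a}\leq 2\norm{h-a}<2\varepsilon$. Your state argument that $x\neq 0$, which is the only place $\varepsilon\leq\norm{a}$ is used, fills a step the paper leaves implicit. The paper asserts $h\neq 0$ without justification (this does not follow from $y\neq 0$ alone) and never checks that $h_+\neq 0$.
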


\begin{proof}
    Let $A_0 \subseteq A$, $0 \neq a \in A_+$, $\varepsilon \in (0, \norm{a}]$, and $y \in A_0$ be as in the statement of the lemma.

    Define $0 \neq h = \frac{y + y^*}{2} \in (A_0)_{sa}.$
    Since $a \geq 0,$ we have that $h-a = \frac{(y-a)+(y-a)^*}{2}$, hence \[ \norm{h-a} \leq \norm{y-a} < \varepsilon.\]

   Now, recall that because $h$ is self-adjoint, it can be decomposed into its usual positive and negative parts, written $h = h_+ - h_-$.
   We wish to show that $\norm{h_-} \leq \norm{h-a}$. 
   
   If $A$ is not unital, we pass to $\widetilde{A}$, the minimal unitization of $A$ (if $A$ is already unital, observe that the following argument will go through in $A$ instead of $\widetilde{A}$).
    Since $h-a \in \widetilde{A}_{sa}$, we have 
    \begin{align*}
        -\norm{h-a}1_{\widetilde{A}} &\leq h-a \\
        \implies a -\norm{h-a}1_{\widetilde{A}} &\leq h.
    \end{align*}
    Since $a \geq 0$ this means that\[-\norm{h-a}1_{\widetilde{A}} \leq a - \norm{h-a}1_{\widetilde{A}} \leq h.\]
    In particular, we obtain the following bounds on the spectrum of $h$ in $\widetilde{A}$ \[\sigma_{\widetilde{A}}(h) \subseteq [-\norm{h-a}, \norm{h}].\]
    Since $h_-$ is normal and $h_-= f(h)$ where $f(t) = \max\{-t, 0\}$, the continuous functional calculus yields \[\norm{h_-} = \max_{\lambda \in \sigma_{\widetilde{A}}(h)} f(\lambda) = \max_{\lambda \in \sigma_{\widetilde{A}}(h)} \max\{-\lambda, 0\} \leq \norm{h-a}.\]
    
    Finally, let $x := h_+.$
    Then \begin{align*}
        \norm{x-a} = \norm{h_+-a} \leq \norm{h_+-h} + \norm{h-a} 
        &=\norm{h_-} + \norm{h-a} \\
        &\leq 2\norm{h-a} \\
        &< 2\varepsilon,
    \end{align*} as required. 
\end{proof}

\begin{prop}\label{prop:directLimTowers}
    Let $(A_\lambda)_{\lambda \in \Lambda}$ and $(B_\lambda)_{\lambda \in \Lambda}$ be directed systems of C*-algebras with injective $\ast$-homomorphisms $\varphi_{\lambda, \mu}: A_\lambda \to A_\mu$ and $\psi_{\lambda, \mu}: B_\lambda \to B_\mu$ for $\lambda \leq \mu \in \Lambda$, inductive limits $A = \underrightarrow\lim A_\lambda$ and $B = \underrightarrow\lim B_\lambda$, and canonical maps $\varphi_{\lambda}: A_\lambda \to A$, $\psi_\lambda: B_\lambda \to B$ for $\lambda \in \Lambda$. Moreover, suppose that $A_\lambda \subseteq B_\lambda$ and $\psi_\lambda |_{A_\lambda} = \varphi_{\lambda}$ for all $\lambda \in \Lambda$. If there exists some $\lambda_0 \in \Lambda$ such that $A_\lambda \subseteq B_\lambda$ is an SP-inclusion for all $\lambda > \lambda_0$, then $A \subseteq B$ is an SP-inclusion.
\end{prop}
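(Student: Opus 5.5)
Identify everything inside $B$: as in the proof of Lemma \ref{lem:directLimSP}, regard $A_\lambda\subseteq B_\lambda\subseteq B$ as increasing families with $B=\overline{\bigcup_\lambda B_\lambda}$. The compatibility $\psi_\lambda|_{A_\lambda}=\varphi_\lambda$ gives $A=\overline{\bigcup_\lambda A_\lambda}\subseteq B$. By Proposition \ref{prop:equivalentInclusion}, for a given nonzero $a\in A_+$ it suffices to produce a nonzero projection $p\in B$ and $\lambda>0$ with $a\geq \lambda p$, or directly a nonzero projection in $\overline{aBa}$.

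\textbf{Approximation.} Fix $0\neq a\in A_+$; scaling, assume $\norm{a}=1$. Fix a small $\varepsilon>0$, say $\varepsilon=1/8$. By density choose $\lambda>\lambda_0$ and $y\in A_\lambda$ with $\norm{y-a}<\varepsilon$; then $y\neq0$. Lemma \ref{lem:positiveApprox}, applied to $A_\lambda\subseteq A$, gives $0\neq x\in (A_\lambda)_+$ with $\norm{x-a}<2\varepsilon$. Next cut down: $x_0:=(x-4\varepsilon)_+\in (A_\lambda)_+$ is nonzero, since $\norm{x}>1-2\varepsilon>4\varepsilon$.

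\textbf{Comparison and transfer.} Since $A_\lambda\subseteq B_\lambda$ is an SP-inclusion, there is a nonzero projection $p\in\overline{x_0B_\lambda x_0}\subseteq\overline{x_0Bx_0}$. Proposition \ref{prop:equivalentInclusion}(ii) even gives $x_0\geq \mu p$; it is enough to know $p$ lies in the hereditary subalgebra generated by $x_0$. The remaining claim is $\overline{x_0Bx_0}\subseteq\overline{aBa}$. This is the standard fact that $\norm{x-a}<\delta$ implies $(x-\delta)_+=dad^*$ for some $d$ (Kirchberg–Rørdam, \cite[Lemma 2.2]{MR1292013}-type). Then $(x-4\varepsilon)_+$ is Cuntz below $a$, and in fact lies in $\overline{aBa}$. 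Hence $p\in\overline{aBa}$, proving that $A\subseteq B$ is an SP-inclusion.

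\textbf{Main obstacle.} The delicate step is the last containment. Cuntz subequivalence alone gives $p\precsim a$, i.e.\ $p=vv^*$ with $v^*v\in\overline{aBa}$, not $p\in\overline{aBa}$. Two routes handle this. First, one can use $p\sim q:=v^*v$: for a projection, $v^*v$ is again a projection, namely a partial isometry's initial projection lying in $\overline{aBa}$. More carefully, for $p\precsim a$ there is $r$ with $p=r a r^*$. Then $w:=a^{1/2}r^*$ satisfies $w^*w=p$, so $w$ is a partial isometry and $ww^*$ is a nonzero projection in $\overline{aBa}$. Second, to avoid citing the Kirchberg–Rørdam lemma, one can prove directly that $(x-\delta)_+=dad^*$ with $\norm{d}\leq 1$ when $\norm{x-a}<\delta$, via $x-\delta\leq a$ in the unitization and the standard factorization of $(x-\delta)_+$. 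I would take the first route, citing the Cuntz-comparison fact.
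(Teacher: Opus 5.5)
Your strategy is the paper's. You approximate $a$ by a positive element of some $A_\lambda$ with $\lambda>\lambda_0$ via Lemma \ref{lem:positiveApprox}, cut down by more than the approximation error, and use the SP-inclusion $A_\lambda\subseteq B_\lambda$ to get a projection. You then move that projection into $\overline{aBa}$ using the Kirchberg--R{\o}rdam identity $(x-\delta)_+=dad^*$. Your bookkeeping (normalize $\norm{a}=1$, approximate within $1/4$, cut at $1/2$) is a little tidier than the paper's rescaling $a_\lambda=(1+\delta)x_\lambda$.

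However, the ``Comparison and transfer'' paragraph contains a false claim that must be deleted. From $\norm{x-a}<\delta$ it does \emph{not} follow that $(x-\delta)_+\in\overline{aBa}$, nor that $\overline{x_0Bx_0}\subseteq\overline{aBa}$.

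For a counterexample, take $B=M_2(\mathbb{C})$, $a=e_{11}$, and $x=\xi\xi^*$ with $\xi=(\cos\theta,\sin\theta)$ and $0<\sin\theta<\delta<1$. Then $\norm{x-a}=\sin\theta$ and $(x-\delta)_+=(1-\delta)x$. The only nonzero projection in $\overline{x_0Bx_0}=\mathbb{C}x$ is $x$, and $x\notin\mathbb{C}e_{11}=\overline{aBa}$. So the projection $p$ supplied by the SP-inclusion need not lie in $\overline{aBa}$; only an equivalent copy of it does.

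Your repair under ``Main obstacle'' is the right one. It is exactly what the paper obtains by citing \cite[Lemma 4.3]{MR3352760}, namely that $\overline{(x-\delta)_+B(x-\delta)_+}$ is isomorphic to a hereditary subalgebra of $\overline{aBa}$.

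The step you assert without proof, $p=rar^*$, is easy here. Proposition \ref{prop:equivalentInclusion} lets you choose $p$ with $\mu p\le x_0$, so $px_0p\ge\mu p$ is invertible in $pBp$. Since $x_0=dad^*$, the element $r=(px_0p)^{-1/2}pd$ satisfies $rar^*=p$. Then $w=a^{1/2}r^*$ has $w^*w=p$, so $ww^*$ is a nonzero projection in $\overline{aBa}$.

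Alternatively, put $z=a^{1/2}d^*$. Then $z^*z=x_0$ and $zz^*\in\overline{aBa}$, and the isomorphism $\overline{z^*zBz^*z}\cong\overline{zz^*Bzz^*}$ of \cite[Lemma 3.8]{MR3352760} carries $p$ to a nonzero projection in $\overline{aBa}$. This is the device the paper uses in its other proofs.
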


\begin{proof}
    We may assume that the algebras $A_\lambda$ (resp. $B_\lambda$) are all subalgebras of $A$ (resp. $B$), with $A_\lambda \subseteq A_\mu$ (resp. $B_\lambda \subseteq B_\mu$) whenever $\lambda \leq \mu \in \Lambda$ and that $\displaystyle A = \overline{\cup_{\lambda \in \Lambda} A_\lambda}$ (resp. $\displaystyle B = \overline{\cup_{\lambda \in \Lambda} B_\lambda}$).

    First, notice that we must have $A \subseteq B$.
    Now, fix any $0 \neq a \in A_+$. 
    We need to show that there is a nonzero projection in the hereditary subalgebra $\overline{aBa} \subseteq B$. 
    
    For all $\varepsilon \in (0, \norm{a}]$ we can find $\lambda \in \Lambda$ and $0 \neq x \in A_\lambda$ such that $\norm{a-x}< \frac{\varepsilon}{4}$. 
    By Lemma \ref{lem:positiveApprox}, this means that there is some $0 \neq x_\lambda \in (A_\lambda)_+ $ with $\norm{a-x_\lambda} < \frac{\varepsilon}{2}.$
    Moreover, we can insist that $\lambda > \lambda_0$, so that $A_\lambda \subseteq B_\lambda$ is an SP-inclusion and 
    \begin{align*}
        \norm{x_\lambda} &\geq \norm{a} - \norm{a - x_\lambda} > \norm{a} - \frac{\varepsilon}{2} \geq \frac{\norm{a}}{2}.
    \end{align*}
    
    Let $\alpha := \max\left\{\frac{\norm{a}}{\norm{x_\lambda}}-1, 0\right\}$.
    Then any choice of $\delta \in \left(\alpha,\ \frac{\varepsilon}{2\norm{x_\lambda}}\right)$ satisfies $\delta \norm{x_\lambda} < \frac{\varepsilon}{2}$ and $(1+\delta)\norm{x_\lambda} > \norm{a}$. 
    Define $a_{\lambda} := (1+\delta)x_\lambda$. 
    We have that
    \begin{align*}
        \norm{a_\lambda-a} \leq \norm{\delta x_\lambda} + \norm{x_\lambda - a} 
        &< \frac{\varepsilon}{2} + \frac{\varepsilon}{2} = \varepsilon.
    \end{align*}
    Since $a_\lambda$ is positive and $\norm{a_\lambda} > \norm{a} \geq \epsilon$, we get that $(a_\lambda - \varepsilon)_+$ is a nonzero, positive element in $A_\lambda \subseteq B_\lambda$. 
    In particular, there is a nonzero projection $p$ such that
    \[p \in \overline{(a_\lambda - \varepsilon)_+ B_\lambda (a_\lambda - \varepsilon)_+} \subseteq \overline{(a_\lambda - \varepsilon)_+B(a_\lambda - \varepsilon)_+}.\]
    As the latter is $\ast$-isomorphic to a hereditary subalgebra of $\overline{aBa}$ (see \cite[Lemma 4.3]{MR3352760})\footnote{We reproduce the statement here for the reader's convenience: given a C*-algebra $A$, $\varepsilon>0$ and $a,b\in A_+$ with $\|a-b\|<\varepsilon,$ the C*-algebra $\overline{(b-\varepsilon)_+A(b-\varepsilon)_+}$ is isomorphic to some hereditary subalgebra of $\overline{aAa}.$}, there is necessarily a projection in $\overline{aBa}$.
    Since $0 \neq a \in A_+$ was arbitrary, it follows that $A \subseteq B$ is an SP-inclusion.
\end{proof}

\subsection{Stabilization}

Gabe and Neagu left open the question of whether RR0-inclusions pass to matrix amplifications \cite[Question 4.1]{GN25}. 
While we were unable to resolve that question in their setting, we have positive results for our more general inclusions.

\begin{prop}\label{prop:matrixAmp}
If $A \subseteq B$ is an SP-inclusion of C*-algebras, then for any $n \in \mathbb{N}$, so is $M_n(A) \subseteq M_n(B)$.
\end{prop}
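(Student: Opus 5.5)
Given a nonzero $X \in M_n(A)_+$, I will find a nonzero positive element of $A$ itself that sits, up to a partial isometry, under $X$, and then transport the projection supplied by the SP-inclusion $A\subseteq B$ back into $\overline{X M_n(B) X}$. By Proposition \ref{prop:unital} and the discussion following it we could pass to unital inclusions, but it is not needed. Write $X = (x_{ij})$ and $e_{ij}$ for the matrix units; these live in $M_n(\widetilde{A})\subseteq M_n(\widetilde{B})$ (or multipliers) and satisfy $e_{ij}M_n(B)e_{kl}\subseteq M_n(B)$. Since $X\neq 0$ and $X\geq 0$, some diagonal entry is nonzero: there is $i$ with $e_{ii}Xe_{ii}\neq 0$, because otherwise $X^{1/2}e_{ii}=0$ for every $i$, forcing $X=0$.

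Put $z := X^{1/2} e_{i1}\in M_n(A)$ (it lies in $M_n(A)$ since $X^{1/2}\in M_n(A)$). Then $z^*z = e_{1i}Xe_{i1} = x_{ii}\otimes e_{11}$, where $0\neq x_{ii}\in A_+$, and $zz^* = X^{1/2}e_{ii}X^{1/2}$. The element $zz^*$ lies in $\overline{X M_n(B) X}$: indeed $X^{1/2}$ is a norm limit of elements of the form $X\cdot$(polynomial in $X$), so $X^{1/2}e_{ii}X^{1/2}\in \overline{X M_n(\widetilde B) X}\cap M_n(B)$, which is contained in the hereditary subalgebra $\overline{X M_n(B)X}$.

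Now apply the SP-inclusion $A\subseteq B$ to $x_{ii}$. By Proposition \ref{prop:equivalentInclusion} we get $0\neq p\in\Proj(B)$ and $\lambda>0$ with $\lambda p\leq x_{ii}$, so $P := p\otimes e_{11}$ is a nonzero projection in $M_n(B)$ with $\lambda P\leq z^*z$. The remaining step, which I expect to be the only real obstacle, is to move $P$ from under $z^*z$ to under $zz^*$. I will use the standard fact that $\overline{z^*z\, D\, z^*z}$ and $\overline{zz^*\, D\, zz^*}$ (with $D = M_n(B)$) are isomorphic hereditary subalgebras via the polar decomposition $z = v|z|$ in $D^{**}$, with $y\mapsto v y v^*$. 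Concretely, set $w := z\, g(z^*z)\, p'$, where $p' := P$ and $g(t)= t^{-1/2}$ on $[\lambda,\infty)$, extended continuously to $[0,\infty)$. Because $\lambda P\leq z^*z$ and $P\in \overline{z^*z D z^*z}$, the projection $P$ commutes approximately with the relevant functional calculus. To avoid that subtlety I will instead define $w := z\,(z^*z)^{-1/2}P$, computed in the corner $P D P$, where $Pz^*zP\geq \lambda P$ is invertible; this requires checking $w^*w = P$. Then $q := ww^*$ is a nonzero projection and $q\in \overline{zz^*Dzz^*}\subseteq \overline{XDX}$, since $w = z(\cdots)$ and a projection $ww^* = z c z^*$ lies in the hereditary algebra generated by $zz^*$.

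If the direct computation of $w^*w = P$ turns out to be messy, I will instead invoke the general fact that a hereditary subalgebra of a C*-algebra with a nonzero projection is isomorphic, via $\overline{z^*zDz^*z}\cong\overline{zz^*Dzz^*}$ (a standard result, cf.\ the footnoted \cite[Lemma 4.3]{MR3352760} style argument), to one containing a nonzero projection. This concludes that $M_n(A)\subseteq M_n(B)$ is an SP-inclusion.
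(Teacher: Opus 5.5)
Your proposal is correct and is essentially the paper's proof. You pick a nonzero diagonal entry, take a projection under it from the SP-inclusion, and transport it from under $z^*z$ to under $zz^* = X^{1/2}e_{ii}X^{1/2}\in\overline{XM_n(B)X}$. The paper does the same with $T = X^{1/2}E_{kk}$, simply citing $\overline{T^*TM_n(B)T^*T}\cong\overline{TT^*M_n(B)TT^*}$, which is \cite[Lemma 3.8]{MR3352760}, not Lemma 4.3.

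Your explicit partial isometry already finishes the argument, so you can drop the ``approximate commutation'' attempt and the fallback. Take $c := (Pz^*zP)^{-1/2}$ in the unital corner $PM_n(B)P$ and $w := zc$. Then $w^*w = c(Pz^*zP)c = P$ and $ww^* = zc^2z^*\le\lambda^{-1}zz^*$, so $ww^*$ is a nonzero projection in $\overline{zz^*M_n(B)zz^*}$.
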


\begin{proof}
    Without loss of generality, let us assume that $A \subseteq B$ is a unital SP-inclusion; recall that we may do this by Proposition \ref{prop:unital} and the discussion thereafter.
    
    We wish to show that for any $0 \neq X \in M_n(A)_+$ there is a nonzero projection in $\overline{X M_n(B)X}$.
    For any $1 \leq i,j \leq n$, let $E_{ij}$ denote the matrix with $1$ in the $(i,j)$-th entry and $0$ elsewhere.
    
    First, notice that since $X = (x_{ij})_{i,j = 1}^n \in M_n(A)_+$, each  diagonal entry $x_{jj}$ must be in $A_+$.
    Indeed, since $X = Y^*Y$, we have $$x_{jj} = (Y^*Y)_{jj} = \sum_{i=1}^n y^*_{ij}y_{ij} \in A_+.$$
    If all the diagonal entries $x_{jj}$ were zero, then we would have $$0 = x_{jj} = \sum_{i=1}^n y^*_{ij}y_{ij},$$ hence $y_{ij} = 0$ for all $i, j$.
    In particular, this would mean $Y = 0$ and $X = Y^*Y = 0$.
    Since $X \neq 0$ by assumption, there must be some diagonal entry $x_{kk}$ which is a nonzero element of $A_+$.
    
    Since $A \subseteq B$ is an SP-inclusion and $0 \neq x_{kk} \in A_+$, there exists a nonzero projection $$p \in \overline{x_{kk}Bx_{kk}} \subseteq B.$$
    We will view $p$ as an element of $M_n(B)$ by identifying it with $P := E_{kk}pE_{kk}$.

    Let $X_{kk} := E_{kk}XE_{kk}$. 
    Then $$P \in \overline{X_{kk}M_n(B)X_{kk}},$$ because
    $p \in \overline{x_{kk} B x_{kk}}$ and the canonical identification of $B$ with $E_{kk}M_n(B)E_{kk}$ gives 
  $$  P \in E_{kk}\overline{X_{kk} M_n(B)X_{kk}}E_{kk} =  \overline{E_{kk}X E_{kk}M_n(B) E_{kk}XE_{kk}} = \overline{X_{kk} M_n(B) X_{kk}}.$$
    
    Now, let $T := X^{1/2}E_{kk} \in M_n(B)$ so that $X_{kk} = T^*T$ and $TT^* = X^{1/2}E_{kk}X^{1/2}.$
    In particular, it follows from \cite[Lemma 3.8]{MR3352760} that \[\overline{T^*T M_n(B) T^*T} \cong \overline{TT^* M_n(B) TT^*}.\]
    Since $\overline{T^*T M_n(B) T^*T}$ contains a nonzero projection $P$, there must be some nonzero projection $Q \in \overline{TT^* M_n(B) TT^*}.$ 
    Moreover, since it is clear that $TT^* \in \overline{XM_n(B)X}$, we must have 
    \[
    Q \in \overline{TT^* M_n(B) TT^*} \subseteq \overline{XM_n(B)X},
    \] 
    recalling that $\overline{TT^* M_n(B) TT^*}$ is the smallest hereditary C*-subalgebra of $M_n(B)$ that contains $TT^*.$ 
    In particular, the latter contains a nonzero projection, as required.
\end{proof}

Combining the above result with Proposition \ref{prop:directLimTowers}, we obtain the following, where $\mathcal{K}$ denotes the C*-algebra of compact operators on $\ell^2(\mathbb{N})$.

\begin{cor}\label{cor:stabilization}
    If $A \subseteq B$ is an SP-inclusion of C*-algebras, then so are the inclusions $$A \otimes \mathcal{K} \subseteq B \otimes \mathcal{K},$$
    and 
    $$A \otimes \mathcal{M} \subseteq B \otimes \mathcal{M}$$
    for any UHF algebra $\mathcal{M}$.
\end{cor}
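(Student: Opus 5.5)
The plan is to write both stabilizations as inductive limits of matrix amplifications. Then Proposition \ref{prop:matrixAmp} handles each finite stage, and Proposition \ref{prop:directLimTowers} passes the conclusion to the limit.

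For $\mathcal{K}$, I take the directed system $(M_n(A))_{n\in\mathbb{N}}$ with the non-unital corner embeddings
\[
\varphi_{n,n+1}\colon X\mapsto \operatorname{diag}(X,0).
\]
This system is injective and its limit is $A\otimes\mathcal{K}$, since $\mathcal{K}=\overline{\bigcup_n M_n(\mathbb{C})}$ and the minimal tensor product commutes with inductive limits of this kind. The same construction for $B$, with $\psi_{n,n+1}(Y)=\operatorname{diag}(Y,0)$, gives $B\otimes\mathcal{K}$.

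The compatibility hypothesis of Proposition \ref{prop:directLimTowers} holds as follows. Each $M_n(A)\subseteq M_n(B)$ is the entrywise inclusion, $\psi_{n,n+1}$ restricts to $\varphi_{n,n+1}$ on $M_n(A)$, and hence $\psi_n|_{M_n(A)}=\varphi_n$. The limit inclusion is then exactly the canonical inclusion $A\otimes\mathcal{K}\subseteq B\otimes\mathcal{K}$ induced by $\mathrm{id}_{\mathcal{K}}$. By Proposition \ref{prop:matrixAmp}, every $M_n(A)\subseteq M_n(B)$ is an SP-inclusion. Proposition \ref{prop:directLimTowers} (with any $\lambda_0$) therefore yields that $A\otimes\mathcal{K}\subseteq B\otimes\mathcal{K}$ is an SP-inclusion.

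For a UHF algebra, write $\mathcal{M}=\varinjlim M_{k_n}(\mathbb{C})$ with $k_n\mid k_{n+1}$ and unital connecting maps $x\mapsto x\otimes 1_{k_{n+1}/k_n}$, which are multiplicity embeddings. Tensoring with $A$ and with $B$ gives directed systems $M_{k_n}(A)\to M_{k_{n+1}}(A)$ and $M_{k_n}(B)\to M_{k_{n+1}}(B)$. Their connecting maps are injective, given by the same block-diagonal formula $X\mapsto\operatorname{diag}(X,\dots,X)$ up to the identification $M_{k_n}(A)\otimes M_{k_{n+1}/k_n}\cong M_{k_{n+1}}(A)$. The limits are $A\otimes\mathcal{M}$ and $B\otimes\mathcal{M}$. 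The $B$-maps restrict to the $A$-maps, so the compatibility hypothesis again holds, and Proposition \ref{prop:matrixAmp} together with Proposition \ref{prop:directLimTowers} finishes this case.

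The only step requiring care is the identification of the inductive limits with the tensor products, and of the limit inclusion with the canonical one. This follows from continuity of the minimal tensor product with respect to injective inductive limits: for $\mathcal{K}$ and $\mathcal{M}$, which are nuclear, $A\otimes M_n=M_n(A)$ sits isometrically inside $A\otimes\mathcal{K}$, and the union of these copies is dense. I would state this identification briefly and cite a standard reference, for instance \cite[Paragraph II.8.2]{Blackadar}.
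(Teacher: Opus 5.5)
Your proof is correct and follows the route the paper indicates: realize $A\otimes\mathcal{K}$ and $A\otimes\mathcal{M}$, together with their $B$-counterparts, as compatible inductive limits of matrix amplifications, apply Proposition \ref{prop:matrixAmp} at each finite stage, and pass to the limit with Proposition \ref{prop:directLimTowers}. One small point: nuclearity is not needed to identify the limits with the tensor products, since the minimal tensor product is injective in general, but invoking it does no harm.
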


\begin{rem}
    The results in this section could (alternatively) follow immediately by using Theorem \ref{thm:tensor} below.
    However, the matrix amplification result established in Proposition \ref{prop:matrixAmp} highlights the fact that certain permanence properties can be proven for SP-inclusions, while they remain elusive for RR0-inclusions.
    We also find the natural use of our earlier inductive limit results to obtain Corollary \ref{cor:stabilization} appealing.
\end{rem}

\subsection{Tensor Products by C*-algebras with Property (SP)} In this subsection, we are interested in permanence of SP-inclusions under the minimal (spatial) tensor product. For ease of notation, we will simply denote this by $\otimes$.

We will make use of the following result, which was also used in \cite{MR4202393} to show that Property (SP) for C*-algebras is preserved under minimal tensor products.

\begin{lem}\cite[Lemma 2.15]{MR2032998}\label{lem:sliceMap}
    If $A$ and $B$ are C*-algebras and if $H$ is a nonzero hereditary subalgebra of $A \otimes B$, then there exists $0 \neq z \in A \otimes B$ with $zz^{*} \in H$ and $z^*z = a \otimes b$ for some nonzero $a \in A_{+}$ and nonzero $b \in B_{+}$.
\end{lem}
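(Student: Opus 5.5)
The plan is to reduce to a single positive element of $H$, find a product state at which it is nearly normed, and use excision of pure states to compress that element, up to a small error, onto an elementary tensor. Standard Cuntz-comparison tricks then manufacture $z$.

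First, pick $0\neq d\in H_+$ with $\norm{d}=1$. Since the minimal tensor product is normed by the representations $\pi\otimes\rho$ with $\pi,\rho$ irreducible, and vector states in $\pi\otimes\rho$ restrict on elementary tensors to products of pure states, I expect to find pure states $\varphi$ on $A$ and $\psi$ on $B$ with $(\varphi\otimes\psi)(d)>1-\eta$ for a small $\eta>0$. A little care is needed here, since vector states of $\pi\otimes\rho$ need not be product vectors. I would approximate a norming vector by finite sums of elementary tensors and then pass to a pure product state, or alternatively use that pure product states are weak$^*$-dense in the pure states of $A\otimes B$.

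Second, I would apply Akemann--Anderson--Pedersen excision to $\varphi$ and $\psi$ separately. This gives positive contractions $e\in A_+$ and $f\in B_+$ with $\varphi(e)=\psi(f)=1$ and $\norm{exe-\varphi(x)e^2}$, $\norm{fyf-\psi(y)f^2}$ small for the finitely many $x$, $y$ appearing in an elementary-tensor approximation of $d$. From these I would deduce
\[
\norm{(e\otimes f)d(e\otimes f)-\mu\,(e^2\otimes f^2)}<\varepsilon,
\qquad \mu:=(\varphi\otimes\psi)(d)\approx 1 .
\]
I would also arrange, by replacing $e$ and $f$ with $g(e)$ and $g(f)$ for suitable $g$ with $g(1)=1$ supported near $1$, that $e$ and $f$ have spectra with weight near $1$. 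This step, controlling the excision error uniformly for a genuinely non-elementary $d$, is the main technical obstacle. The key point is that excision needs only finitely many test elements plus an approximation argument.

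Third, I would use the Kirchberg--R{\o}rdam lemma: if $\norm{x-y}<\varepsilon$ with $x,y$ positive, then $(y-\varepsilon)_+=rxr^*$ for some contraction $r$. Applied with $x=(e\otimes f)d(e\otimes f)$ and $y=\mu(e^2\otimes f^2)$, this gives $(y-\varepsilon)_+=w^*w$ with $w:=d^{1/2}(e\otimes f)r^*$. Moreover $ww^*\le d^{1/2}(e\otimes f)^2d^{1/2}\le d$ lies in $H$ by heredity.

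Finally, I would pass to an honest elementary tensor. Take $h$ continuous, positive, supported on $[1-\delta,1]$ and nonzero at $1$, and set $a:=h(e)$ and $b:=h(f)$. These are nonzero since $\varphi(e)=1$ forces $1\in\sigma(e)$, and likewise $1\in\sigma(f)$. For $\delta$ small, $\mu t s-\varepsilon$ is bounded below by a positive constant on the joint support, so $a\otimes b$ lies in the hereditary subalgebra generated by $(y-\varepsilon)_+$. Indeed, functional calculus gives $a\otimes b=v^*(y-\varepsilon)_+v$ for some $v\in A\otimes B$. Setting $z:=wv$ then gives $z^*z=a\otimes b$, and $zz^*=wvv^*w^*\le\norm{v}^2ww^*\in H$, so $zz^*\in H$ by heredity. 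Also $z\neq0$ because $a\otimes b\neq0$.
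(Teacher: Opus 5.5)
The paper gives no proof of this lemma; it quotes it from Blanchard--Kirchberg. Your argument therefore has to stand on its own. Its architecture is sound: excise pure states on each factor to compress $d$ onto $\mu\,e^2\otimes f^2$, apply the Kirchberg--R{\o}rdam lemma, then cut down by functional calculus in the commutative algebra $C^*(e)\otimes C^*(f)$. The excision, Kirchberg--R{\o}rdam and final steps all check out.

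\textbf{The gap is in your first step.} It is false that some pure product state nearly norms $d$, and both justifications you offer fail. Take $A=B=M_2$ and $\Omega=(e_1\otimes e_1+e_2\otimes e_2)/\sqrt{2}$. Let $d$ be the rank-one projection onto $\mathbb{C}\Omega$ and $H=\mathbb{C}d$. Then $H$ is hereditary in $M_4$, and $d$ is its only positive element of norm one. Pure states of $M_2$ are vector states $\omega_x$, and by Cauchy--Schwarz $(\omega_x\otimes\omega_y)(d)=|\langle x\otimes y,\Omega\rangle|^2\le 1/2$. So no pure product state gets within $1/2$ of $\|d\|$. The same estimate shows that the pure state $\omega_\Omega$ of $M_2\otimes M_2$ is not a weak$^*$-limit of pure product states. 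Approximating a norming vector by finite sums of elementary tensors also never produces a product vector.

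\textbf{The repair is easy, because you never actually use $\mu\approx 1$.} You only need $\mu>0$, with $\varepsilon$ chosen after $\mu$. To get pure $\varphi,\psi$ with $(\varphi\otimes\psi)(d)>0$, argue by slices. Suppose $(\varphi\otimes\psi)(d)=0$ for all pure $\varphi,\psi$. For each pure $\psi$, the positive element $(\mathrm{id}\otimes\psi)(d)\in A_+$ is annihilated by every pure state of $A$, hence is $0$. So $(\varphi\otimes\psi)(d)=0$ for all $\varphi\in A^*$ and pure $\psi$. Repeat the argument in the other variable, using $(\varphi\otimes\mathrm{id})(d)\in B_+$ for states $\varphi$. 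This gives $(\varphi\otimes\psi)(d)=0$ for all $\varphi\in A^*$ and $\psi\in B^*$. Since product functionals separate points of $A\otimes B$, this forces $d=0$.

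Then fix $\delta\in(0,1)$ and $0<\varepsilon<\mu(1-\delta)^4$ \emph{before} excising. Since $y=\mu\,e^2\otimes f^2$, the relevant function on $\sigma(e)\times\sigma(f)$ is $\mu t^2s^2-\varepsilon$, not $\mu ts-\varepsilon$. On $[1-\delta,1]^2$ it is at least $\mu(1-\delta)^4-\varepsilon>0$, so your $v$ exists in $C^*(e)\otimes C^*(f)\subseteq A\otimes B$. With this order of choices the proof is complete.

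Two of your stated worries are unnecessary:
\begin{enumerate}
\item The uniform excision estimate is just the triangle inequality applied to a finite elementary-tensor approximation of $d$.
\item No spectral adjustment of $e,f$ is needed, since $\varphi(e)=1=\|e\|$ already gives $1\in\sigma(e)$.
\end{enumerate}
You could also excise on one side only. Use a pure $\psi$ with $(\mathrm{id}\otimes\psi)(d)\neq 0$ and the estimate $(1\otimes f)d(1\otimes f)\approx(\mathrm{id}\otimes\psi)(d)\otimes f^2$; this avoids product states altogether.
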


\begin{thm}\label{thm:tensor}
    Let $A \subseteq B$ be an SP-inclusion of C*-algebras. If $D$ is a C*-algebra with Property (SP) then $A \otimes D \subseteq B \otimes D$ is an SP-inclusion. 
\end{thm}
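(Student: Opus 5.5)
Fix a nonzero $x \in (A \otimes D)_+$. The plan is to produce a nonzero projection in $\overline{x(B\otimes D)x}$ by reducing to elementary tensors with Lemma \ref{lem:sliceMap}, and then transporting a projection back along a Murray--von Neumann-type equivalence of hereditary subalgebras, as in the proof of Proposition \ref{prop:matrixAmp}.

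The first step applies Lemma \ref{lem:sliceMap} to $A$ and $D$ (not to $B$ and $D$), with the nonzero hereditary subalgebra $H := \overline{x(A\otimes D)x}$ of $A \otimes D$. This gives $0 \neq z \in A \otimes D$ with $zz^* \in H$ and $z^*z = a \otimes d$, where $0 \neq a \in A_+$ and $0 \neq d \in D_+$.

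In the second step I would build a projection under the elementary tensor.
\begin{itemize}
\item Since $A \subseteq B$ is an SP-inclusion, there is a nonzero projection $p \in \overline{aBa}$.
\item Since $D$ has Property (SP), there is a nonzero projection $q \in \overline{dDd}$.
\item Then $p \otimes q$ is a nonzero projection in $B \otimes D$, because the minimal tensor norm is a cross norm.
\item Moreover $p\otimes q \in \overline{(a\otimes d)(B\otimes D)(a\otimes d)}$. To see this, approximate $p$ by elements $a b_n a$ and $q$ by elements $d e_n d$. The elementary tensors $(a b_n a)\otimes(d e_n d) = (a\otimes d)(b_n\otimes e_n)(a\otimes d)$ then converge to $p\otimes q$.
\end{itemize}

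The third step transports this projection. Write $z^*z = a\otimes d$ inside $B\otimes D$. By \cite[Lemma 3.8]{MR3352760},
\[
\overline{z^*z(B\otimes D)z^*z} \cong \overline{zz^*(B\otimes D)zz^*}.
\]
So the right-hand side contains a nonzero projection $Q$. Finally, $zz^* \in H \subseteq \overline{x(B\otimes D)x}$, and the latter is a hereditary subalgebra of $B\otimes D$. Since $\overline{zz^*(B\otimes D)zz^*}$ is the smallest hereditary subalgebra of $B\otimes D$ containing $zz^*$, it follows that $Q \in \overline{x(B\otimes D)x}$.

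Two points need care. The first is applying the slice lemma to $A\otimes D$ rather than $B\otimes D$, which is needed so that $a$ lies in $A$ and the SP-inclusion hypothesis can be used. The second is checking that $A\otimes D \subseteq B\otimes D$ is isometric for the minimal tensor product (it is, by injectivity of the spatial norm), so that hereditary subalgebras and $z$ can be viewed consistently in $B\otimes D$. I do not expect a serious obstacle beyond these bookkeeping points.
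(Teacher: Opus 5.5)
Your proposal is correct and follows essentially the same route as the paper's proof: apply Lemma \ref{lem:sliceMap} inside $A \otimes D$, form $p \otimes q \in \overline{(a\otimes d)(B\otimes D)(a\otimes d)}$, transport it via \cite[Lemma 3.8]{MR3352760}, and conclude using $zz^* \in \overline{x(B\otimes D)x}$. Your version is slightly cleaner in one respect. You explicitly take the image $Q$ of $p \otimes q$ under the isomorphism, whereas the paper writes $p\otimes q \in \overline{zz^*(B\otimes D)zz^*}$ directly.
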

\begin{proof}
    We wish to show that for any $0 \neq x \in (A \otimes D)_+$, the hereditary subalgebra $ \overline{x(B \otimes D)x}$ contains a nonzero projection from $B \otimes D$.
    
    Fix an arbitrary $0 \neq x \in (A \otimes D)_+$ and let $H := \overline{x(A \otimes D)x}$. 
    (Notice that we are first considering a hereditary subalgebra of $A \otimes D$ rather than $B \otimes D$).
    By Lemma \ref{lem:sliceMap}, there exists $0 \neq z \in A \otimes D$ with $h: = zz^{*}\in H$ and $z^{*}z = a \otimes d$ for some $0 \neq a \in A_{+}$ and $0\neq d \in D_{+}$.
    Since $D$ has Property (SP), there is a nonzero projection $q \in \overline{dDd}$.
    Moreover, since $A \subseteq B$ is an SP-inclusion, there is a nonzero projection $p \in \overline{aBa}$.
    In particular, 
    \begin{align*}
        0 \neq p \otimes q \in \overline{aBa} \otimes \overline{dDd} \cong \overline{(a \otimes d)(B \otimes D)(a \otimes d)} &= \overline{z^*z(B \otimes D)z^*z} \\ &\cong \overline{zz^*(B \otimes D) zz^*},
    \end{align*}
    where the final $\ast$-isomorphism follows from \cite[Lemma 3.8]{MR3352760}.
    Finally, since $h \in \overline{x(A \otimes D)x} \subseteq \overline{x(B \otimes D)x}$ we get $$p \otimes q \in \overline{zz^*(B \otimes D) zz^*} = \overline{h(B \otimes D)h} \subseteq \overline{x(B \otimes D)x},$$ as required.
\end{proof}

\begin{rem}
    The analogous statement of Theorem \ref{thm:tensor} does not hold for RR0-inclusions. For example, if $H$ is a separable, infinite-dimensional Hilbert space, then $B(H)$ has real rank zero, but it is well-known that $B(H) \otimes B(H)$ does not \cite[Corollary 1.2]{MR1707747}. In particular, by \cite[Lemma 1.9]{GN25}, $B(H) \subseteq B(H)$ is a RR0-inclusion but $B(H) \otimes B(H) \subseteq B(H) \otimes B(H)$ is not. 
\end{rem}

Until now, we have been developing the theory of ``relative'' Property (SP) for inclusions of C*-algebras.
We now turn to the distinct but related study of the permanence of Property (SP) for C*-algebras under various kinds of inclusions. 

\section{Property (SP) Under Quantum Symmetries}\label{sec:Symmetries}  

In this section, we are interested in answering the question: ``If $A$ has Property (SP), under what kind of inclusions $A \subseteq B$ does $B$ necessarily have Property (SP)?''
This is a rather classical problem, which has been considered on numerous occasions (e.g., \cite{MR1623278}, \cite{MR1862184}, \cite{MR3352760}, \cite{MR5001191}).
In particular, Property (SP) for C*-algebras is known to be permanent under reduced crossed products coming from certain ``nice'' group actions; these are examples of \emph{classical symmetries}. 
\begin{exmp}\cite[Theorem 4.2]{MR1623278} \label{exmp:finiteGroup}
    Let $A$ be a simple (unital) C*-algebra with Property (SP). If $G$ is finite group and $\alpha: G \to \operatorname{Aut}(A)$ is any action by automorphisms, then $A \rtimes_{r, \alpha} G$ has Property (SP). 
    
    Reduced crossed products coming from finite group actions give rise to a canonical faithful conditional expectation $E: A \rtimes_{r, \alpha} G \twoheadrightarrow A$ of \emph{finite Watatani index}. 
    In fact, by \cite[Theorem 5.1]{MR1862184}, Property (SP) for simple (unital) C*-algebras is known to be permanent under any (unital) inclusion $A \subseteq B$ equipped with a faithful conditional expectation of finite index.
\end{exmp}

In Section \ref{subsec:DiscreteInclusions} we will extend this type of result to show that, under mild hypotheses, Property (SP) is also permanent under \emph{quantum symmetries}. 
First, we will need to clarify some terminology and examples from the classical case; this will also allow us to upgrade some known results from the setting of Property (SP) for C*-algebras to the setting of relative Property (SP) for inclusions.

\subsection{Property (SP) and Conditional Expectations}\label{subsec:outerSP} 
We will mainly be interested in the permanence of Property (SP) under inclusions equipped with a (faithful) conditional expectation satisfying suitable properties.

\smallskip
Let $ A \subseteq B$ be an inclusion of C*-algebras and let $E: B \twoheadrightarrow A$ be a conditional expectation. We say that $E$ has the {\bf pinching property} if for any nonzero $x \in B_+$ and any $\varepsilon > 0,$ there is an element $y \in A$ such that
\begin{align}\label{eq:pinching}
    \norm{y^*(x-E(x))y} < \varepsilon \quad \text{ and } \quad \norm{y^*E(x)y} \geq \norm{E(x)} - \varepsilon.
    \tag{$\ast$}
\end{align}
We highlight that we have defined the pinching property without reference to units, in contrast with the conventions from \cite{MR1862184, MR4599249}, as we shall make no use of this assumption.

\smallskip
In order to put the results of this section in its proper context, we include the statement of a theorem from which we drew inspiration. 
    
\begin{lem}\cite[Theorem 2.1]{MR1862184}\label{thm:Osaka1}
    Let $1 \in A \subseteq B$ be a unital inclusion of C*-algebras equipped with a faithful conditional expectation $E: B \twoheadrightarrow A$. Suppose that $A$ has Property (SP). If $E$ has the pinching property, then $B$ has Property (SP). Moreover, every nonzero hereditary C*-subalgebra of $B$ has a projection that is equivalent to some projection in $A$.
\end{lem}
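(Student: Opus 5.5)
Fix a nonzero hereditary C*-subalgebra $H\subseteq B$. By the Remark after Lemma \ref{lem:equivalentSP} it suffices to treat $H=\overline{xBx}$ for some $0\neq x\in B_+$, and by scaling we may take $\norm{E(x)}=1$. The plan is to pinch $x$ down to an element of $A$, use Property (SP) of $A$ there, and transport the resulting projection back into $\overline{xBx}$.

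\textbf{Step 1 (pinching).} Since $E$ is faithful, $E(x)\neq 0$. Fix a small $\varepsilon>0$. The pinching property gives $y\in A$ with
\[
\norm{y^*(x-E(x))y}<\varepsilon \quad\text{and}\quad \norm{y^*E(x)y}\geq 1-\varepsilon .
\]
Set $b:=y^*xy\in B_+$ and $c:=y^*E(x)y=E(b)\in A_+$. Then $\norm{b-c}<\varepsilon$ and $\norm{c}\geq 1-\varepsilon$.

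\textbf{Step 2 (Property (SP) in $A$).} For $\varepsilon<1/2$ the element $(c-\varepsilon)_+$ is a nonzero element of $A_+$. Since $A$ has Property (SP), there is a nonzero projection $q\in\overline{(c-\varepsilon)_+A(c-\varepsilon)_+}\subseteq\overline{(c-\varepsilon)_+B(c-\varepsilon)_+}$.

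\textbf{Step 3 (transport to $B$).} Apply \cite[Lemma 4.3]{MR3352760} to $b,c\in B_+$ with $\norm{b-c}<\varepsilon$. It says $\overline{(c-\varepsilon)_+B(c-\varepsilon)_+}$ is $\ast$-isomorphic to a hereditary subalgebra of $\overline{bBb}$. The image of $q$ under this isomorphism is a nonzero projection $p\in\overline{bBb}$.

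\textbf{Step 4 (back into $\overline{xBx}$).} Write $b=(x^{1/2}y)^*(x^{1/2}y)$. By \cite[Lemma 3.8]{MR3352760}, $\overline{bBb}\cong\overline{TT^*BTT^*}$ with $T=x^{1/2}y$ and $TT^*=x^{1/2}yy^*x^{1/2}\in\overline{xBx}$. Hence $\overline{xBx}$ contains a nonzero projection, exactly as in the proof of Proposition \ref{prop:matrixAmp}. This proves that $B$ has Property (SP).

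\textbf{Main obstacle: the ``moreover'' clause.} We must show the projection found is Murray--von Neumann equivalent to one in $A$. The isomorphism in \cite[Lemma 4.3]{MR3352760} is implemented by a partial isometry in $B^{**}$, or by an element of the form $r^*(\cdot)r$ with $r\in B$ built by functional calculus from $b,c$. I would check that it carries $q$ to a projection $p$ with $q\sim p$ in $B$. The same holds for the isomorphism in \cite[Lemma 3.8]{MR3352760}, which is implemented via the polar decomposition of $T$.

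Concretely, first try the standard estimate: for a projection $q$ with $q\le\lambda^{-1}(c-\varepsilon)_+$ one has $\norm{q b q-q c q}<\varepsilon$, so $qbq$ is invertible in $qBq$. Then $v:=b^{1/2}q(qbq)^{-1/2}$ is a partial isometry in $B$ with $v^*v=q$ and $vv^*\in\overline{bBb}$. The same polar-decomposition trick with $T$ moves $vv^*$ into $\overline{xBx}$. This yields a projection in $\overline{xBx}$ equivalent to $q\in A$, as claimed.
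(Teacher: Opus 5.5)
Your proof is correct. Steps 1--4 are the argument the paper gives for Proposition \ref{prop:pinchingSP}, which it describes as Osaka's proof, so the Property (SP) assertion matches the paper exactly.

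The paper does not prove the ``moreover'' clause; it only cites Osaka. Your partial-isometry construction does establish it, but one inference needs repair. The bound $\norm{qbq-qcq}<\varepsilon$ alone does not make $qbq$ invertible in $qBq$. What you need is $qcq\geq\varepsilon q$. This holds because $q\in\overline{(c-\varepsilon)_+A(c-\varepsilon)_+}$ forces $q\leq \mathbf{1}_{(\varepsilon,\infty)}(c)$ in $A^{**}$, by the support-projection argument in the proof of Lemma \ref{lem:equivalentSP}. Hence $qbq\geq(\varepsilon-\norm{b-c})\,q$, and $\varepsilon-\norm{b-c}>0$.

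Once you have this, Steps 3 and 4 and the appeals to \cite[Lemmas 3.8 and 4.3]{MR3352760} become redundant. Since $b=T^*T$ with $T=x^{1/2}y$, set $u:=Tq(qbq)^{-1/2}\in B$. Then $u^*u=q$ and $uu^*\leq\norm{(qbq)^{-1}}\,TT^*\leq\norm{(qbq)^{-1}}\norm{y}^2x$. So $uu^*$ is a nonzero projection in $\overline{xBx}$ that is Murray--von Neumann equivalent to $q\in A$, which gives both conclusions at once.

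This single step also replaces your two-stage transport through $\overline{bBb}$. As written, your second stage needs an unstated fact: $pbp$ is invertible in $pBp$ for every projection $p\in\overline{bBb}$. That fact is true, since $pf_n(b)p\to p$ for the approximate unit $f_n(b)=b(b+1/n)^{-1}\leq nb$, but it is one more thing to check.
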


We now show a strengthened version of Lemma \ref{thm:Osaka1}. In particular, we show that in the presence of a faithful conditional expectation with the pinching property, we may weaken the assumption that $A$ has Property (SP) to the assumption that $A \subseteq B$ is an SP-inclusion. Moreover, we remove the assumption of unitality. 
\begin{prop}\label{prop:pinchingSP}
    Let $A \subseteq B$ be an SP-inclusion and let $E: B \twoheadrightarrow A$ be a faithful conditional expectation. If $E$ has the pinching property, then $B$ has Property (SP).
\end{prop}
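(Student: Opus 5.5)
Following Osaka's argument for Lemma \ref{thm:Osaka1}, we fix a nonzero $x \in B_+$ and aim to produce a nonzero projection in $\overline{xBx}$. Since $E$ is faithful, $E(x) \neq 0$. We normalize so that $\norm{E(x)} = 1$ and fix a small $\varepsilon>0$, say $\varepsilon < 1/8$. The pinching property (\ref{eq:pinching}) then gives $y \in A$ with
\[
\norm{y^*(x-E(x))y} < \varepsilon \quad\text{and}\quad \norm{y^*E(x)y} \geq 1-\varepsilon .
\]
Set $a := y^*E(x)y \in A_+$ and $b := y^*xy \in B_+$. Then $\norm{a-b}<\varepsilon$ and $\norm{a} \ge 1-\varepsilon > \varepsilon$, so $(a-\varepsilon)_+$ is a nonzero positive element of $A$.

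Next we use the SP-inclusion hypothesis. Since $A \subseteq B$ is an SP-inclusion, the hereditary subalgebra $\overline{(a-\varepsilon)_+ B (a-\varepsilon)_+}$ of $B$ contains a nonzero projection $p$. We then apply \cite[Lemma 4.3]{MR3352760}, quoted in the footnote to the proof of Proposition \ref{prop:directLimTowers}, with the roles of $a$ and $b$ chosen so that it reads: since $\norm{b-a}<\varepsilon$, the algebra $\overline{(a-\varepsilon)_+B(a-\varepsilon)_+}$ is $*$-isomorphic to a hereditary subalgebra of $\overline{bBb}$. The image of $p$ under this isomorphism is a nonzero projection $q \in \overline{bBb}$.

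It remains to transfer $q$ into $\overline{xBx}$. Write $b = y^*xy = T^*T$ with $T := x^{1/2}y$. By \cite[Lemma 3.8]{MR3352760}, $\overline{T^*TBT^*T} \cong \overline{TT^*BTT^*}$, so $\overline{TT^*BTT^*}$ also contains a nonzero projection. Since $TT^* = x^{1/2}yy^*x^{1/2}$ lies in $\overline{xBx}$, and $\overline{TT^*BTT^*}$ is the smallest hereditary subalgebra of $B$ containing $TT^*$, we obtain $\overline{TT^*BTT^*} \subseteq \overline{xBx}$. This gives the desired nonzero projection in $\overline{xBx}$. By the Remark following Lemma \ref{lem:equivalentSP}, it suffices to check such hereditary subalgebras, so $B$ has Property (SP).

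No unitality is needed anywhere: $y$ lies in $A$ itself and all the cited lemmas are stated for arbitrary C*-algebras. The main point requiring care is the application of \cite[Lemma 4.3]{MR3352760}. The cut-down must be taken on $a$, the element lying in $A$, because only there does the SP-inclusion hypothesis supply a projection; the isomorphism must then land inside the hereditary subalgebra generated by $b$. The choice $\varepsilon < \norm{a}$ guaranteeing $(a-\varepsilon)_+\neq0$ is exactly what the second pinching inequality provides.
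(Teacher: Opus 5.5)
Your proposal is correct and follows the paper's proof essentially step for step. Both use the pinching property to get $y\in A$, cut down $y^*E(x)y$ by $\varepsilon$ so the SP-inclusion hypothesis gives a projection, move that projection into $\overline{y^*xyBy^*xy}$ via \cite[Lemma 4.3]{MR3352760}, and then into $\overline{xBx}$ via \cite[Lemma 3.8]{MR3352760} applied to $x^{1/2}y$; the only cosmetic difference is that you normalize $\norm{E(x)}=1$ where the paper instead takes $\varepsilon<\norm{E(x)}/2$.
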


\noindent The proof of this result is morally the same as the proof given by Osaka in \cite[Theorem 2.1]{MR1862184}. We include the details for the convenience of the reader. 

\begin{proof}
    First let us fix arbitrary $0 \neq x \in B_+$.
    Since $E$ has the pinching property, for any $\varepsilon > 0$, we can find an element $y \in A$ such that $\norm{y^*(x-E(x))y} < \varepsilon $. 
    In particular, pick $\varepsilon \in \left(0, \frac{\norm{E(x)}}{2}\right)$. By faithfulness of the expectation and the second inequality in the pinching property, we have 
    \[\norm{y^*E(x)y} \geq \norm{E(x)} - \varepsilon > \frac{\norm{E(x)}}{2} > 0,\] hence $0 \neq (y^*E(x)y - \varepsilon)_+ \in A^+$.
    Since $A \subseteq B$ is an SP-inclusion, there is some nonzero projection \[p \in  \overline{(y^*E(x)y - \varepsilon)_+ B(y^*E(x)y - \varepsilon)_+}.\]
   By \cite[Lemma 4.3]{MR3352760}, the algebra $\overline{(y^*E(x)y - \varepsilon)_+ B(y^*E(x)y - \varepsilon)_+}$ is $\ast$-isomorphic to a hereditary subalgebra of $\overline{y^*xy B y^*xy}$.
   In particular, there is a nonzero projection $q \in \overline{y^*xy B y^*xy}$.

    Let $z := x^{1/2}y$, so that $z^*z = y^*xy$ and $zz^* = x^{1/2}yy^*x^{1/2}$.
    Then by \cite[Lemma 3.8]{MR3352760}, we have
    \[q \in \overline{z^*z B z^*z} \cong \overline{zz^*Bzz^*} \subseteq \overline{xBx}.\]
    So $\overline{xBx}$ contains a nonzero projection for any nonzero $x \in B_+$.
    In other words, $B$ has Property (SP).    
\end{proof}

\begin{thm}[Theorem \ref{mainthm:pinchingMain}]\label{thm:pinchingMain}    Let $A \subseteq B$ be an inclusion of C*-algebras with a faithful conditional expectation $E:B\twoheadrightarrow A$ which has the pinching property. Then $A \subseteq B$ is an SP-inclusion if and only if $B$ has Property (SP). 
\end{thm}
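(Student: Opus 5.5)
The statement is an ``if and only if'', and each direction reduces immediately to a result already established, so the plan is simply to combine them.

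For the forward implication, assume $A \subseteq B$ is an SP-inclusion. Together with the standing hypothesis that $E\colon B \twoheadrightarrow A$ is a faithful conditional expectation with the pinching property, this is exactly the setting of Proposition \ref{prop:pinchingSP}. That proposition gives at once that $B$ has Property (SP).

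For the reverse implication, assume $B$ has Property (SP). Theorem \ref{thm:mainSP} says that whenever $A$ or $B$ has Property (SP), the inclusion $A\subseteq B$ is an SP-inclusion. One can also see this directly: for $0\neq a\in A_+\subseteq B_+$, Property (SP) of $B$ provides a nonzero projection in $\overline{aBa}$, which is precisely Definition \ref{defn:relativeSP}. This direction does not use the conditional expectation at all.

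There is no real obstacle here. The substantive work, which is moving from a projection under $(y^*E(x)y-\varepsilon)_+$ to one in $\overline{xBx}$ via \cite[Lemma 4.3]{MR3352760} and \cite[Lemma 3.8]{MR3352760}, has already been done in the proof of Proposition \ref{prop:pinchingSP}. The only point to check is that the forward direction needs no unitality; the pinching property as defined here and Proposition \ref{prop:pinchingSP} were both stated without units, so nothing further is required.
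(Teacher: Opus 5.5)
Your proposal is correct and matches the paper's proof: the paper likewise gets the forward direction directly from Proposition \ref{prop:pinchingSP} and the reverse direction from Theorem \ref{thm:mainSP}, with the expectation playing no role in the latter. Your additional remark that no unitality is needed is consistent with how the pinching property and Proposition \ref{prop:pinchingSP} are stated in the paper.
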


\begin{proof}
    If $B$ has Property (SP), then $A \subseteq B$ is an SP-inclusion by Theorem \ref{thm:mainSP}.
    On the other hand, if $A \subseteq B$ is an SP-inclusion and $E: B \twoheadrightarrow A$ has the pinching property, then $B$ has Property (SP) by Proposition \ref{prop:pinchingSP}.
\end{proof}

\begin{rem}
    Theorem \ref{thm:pinchingMain} can be used as an obstruction for an SP-inclusion $A\subseteq B$ to have a faithful pinching expectation. 
    Viewing Example \ref{exmp:commutative} under this lens, we conclude no such $E$ exists. 
\end{rem}

Now, we consider a related condition on a conditional expectation $E: B \twoheadrightarrow A.$ 
This condition will be satisfied in many ``canonical'' kinds of inclusions, allowing us to obtain concrete examples. 

\smallskip
Let $A \subseteq B$ be an inclusion of C*-algebras and let $E: B \twoheadrightarrow A$ be a conditional expectation. Following \cite[Definition 2.2]{MR1862184}, we say that $E$ is an {\bf outer conditional expectation} if for any element $x \in B$ with $E(x) = 0$ and any nonzero hereditary C*-subalgebra $H \subseteq A$, \[ \inf\left\{\norm{hxh}: h \in H_+, \norm{h} = 1\right\} = 0.\]
We make no reference to units when defining outer expectations, as we will make no use of them in the following.

The proof of the following fact is sketched as part of the proof of  \cite[Corollary 2.3]{MR1862184}. 
We include a full proof here for the benefit of the reader.

\begin{prop}\label{prop:pinchingOuter}
    Let $A \subseteq B$ be an inclusion of C*-algebras. If $E: B \twoheadrightarrow A$ is an outer conditional expectation, then it has the pinching property.
\end{prop}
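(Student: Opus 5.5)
Fix a nonzero $x \in B_+$ and $\varepsilon > 0$. We want $y \in A$ satisfying both inequalities in \eqref{eq:pinching}. Set $a := E(x) \in A_+$ and $w := x - E(x)$, so that $E(w) = 0$. If $E(x) = 0$, the second inequality is trivial and $y = 0$ works (the first norm is then $0$). So assume $\norm{a} > 0$. The idea is to shrink down to a hereditary subalgebra of $A$ on which $a$ is nearly maximal, and then use outerness there to make the off-diagonal part $w$ small.

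Concretely, pick $\delta \in (0, \min\{\varepsilon, \norm{a}\})$ and let $H := \overline{(a - \norm{a} + \delta)_+ A (a - \norm{a} + \delta)_+}$. This is a nonzero hereditary C*-subalgebra of $A$. By outerness applied to $w$ and $H$, there is $h \in H_+$ with $\norm{h} = 1$ and $\norm{hwh} < \varepsilon$. Take $y := h$; the first inequality in \eqref{eq:pinching} then holds immediately.

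For the second inequality, I would show that $hah \geq (\norm{a} - \delta) h^2$, which gives $\norm{hah} \geq \norm{a} - \delta > \norm{a} - \varepsilon$. Work in $A^{**}$ (or the commutative picture). Let $q := \mathbf{1}_{(\norm{a}-\delta, \infty)}(a)$ be the spectral projection, which is the support projection of $(a - \norm{a} + \delta)_+$, as identified in the proof of Lemma \ref{lem:equivalentSP}. Every element of $H$ satisfies $h = qhq$. Since $q$ commutes with $a$ and $aq \geq (\norm{a} - \delta) q$, we get
\[
hah = h q a q h \geq (\norm{a}-\delta)\, h q h = (\norm{a} - \delta) h^2 .
\]
Taking norms and using $\norm{h^2} = 1$ gives the claim. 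Since the order on $A$ agrees with that on $A^{**}$, the inequality is legitimate.

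The main obstacle is this last step: controlling $\norm{hah}$ from below for an arbitrary norm-one element $h$ of $H$. The support-projection argument handles it cleanly. A more elementary alternative is to choose $h$ of the form $g(a)$, but outerness does not let us pick the form of $h$, so the spectral-projection bound is the route I would use. If needed, the argument can be made more robust by replacing $\delta$ with $\delta/2$ to get strict inequalities.
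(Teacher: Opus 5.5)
Your proposal is correct and follows essentially the same route as the paper. Both arguments pass to the hereditary subalgebra of $A$ cut out by the spectral projection $\mathbf{1}_{(\|E(x)\|-\delta,\infty)}(E(x))$, use outerness there to make $\|h(x-E(x))h\|$ small, and bound $\|hE(x)h\|$ from below for every norm-one $h\in H_+$ via $h=qhq$. Your generator $(a-\|a\|+\delta)_+$ and the paper's piecewise-linear $f(a)$ give the same $H$. The one cosmetic difference is that you take $y=0$ when $E(x)=0$; this is perfectly valid, since the definition of the pinching property does not require $y\neq 0$.
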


\begin{proof}
    Let us fix arbitrary $0 \neq x \in B_+$ and $\varepsilon > 0.$

    If $E(x) = 0$, then by outerness of the expectation, in any nonzero hereditary subalgebra of $A$, there exists a positive element $0 \neq y$ with $\norm{y} =1$, such that \[ \norm{y(x-E(x))y} = \norm{yxy}  < \varepsilon.\] 
    Moreover it is clear that \[\norm{yE(x)y} = 0 \geq \norm{E(x)} - \varepsilon,\] as required.
    So, both conditions of Eq. (\ref{eq:pinching}) are satisfied.
    
    Now suppose that $E(x) \neq 0$ for our fixed $x$.
     First, notice that for $z:= x - E(x),$ we have $E(z) = 0$.
     So by outerness of the expectation, in any nonzero hereditary subalgebra of $A$, there exists a positive element $0 \neq y$ with $\norm{y} =1$, such that 
     \begin{equation}\label{eq:pinching1}
         \norm{y(x-E(x))y} = \norm{yzy}  < \varepsilon.
         \tag{1}
     \end{equation}
    
    For the purpose of easing notation, let us now write $0 \neq a:= E(x)$ and $M:= \norm{a} > 0.$ 
    Fix some $0 < \delta < \min\{M, \varepsilon\}$. 
    We define a continuous function ${f: [0, \norm{x}] \to [0, \infty)}$ given by
      \[ f(t) = \begin{cases}
        1 \quad & t \geq M \\
        \text{linear} & M - \delta \leq t \leq M \\
        0 & t < M - \delta.
    \end{cases}\]
    
    Then $0 \neq f(a) \in A_+$ and  $H:= \overline{f(a)Af(a)}$ is a nonzero hereditary subalgebra of $A$.
    As described in the proof of Lemma \ref{lem:equivalentSP}, there is a unique projection $q \in A^{**}$ such that $H = qA^{**}q \cap A,$ namely \[q = s(f(a)) = \mathbf{1}_{(M- \delta, \infty)}(a) = \mathbf{1}_{(M-\delta, M]}(a).\]
    Pick any $0 \neq h \in H_+$ with $\norm{h} = 1$.
    Then, since $qh = h = hq$, we get 
    \begin{align*}
        hah = h(qaq)h &\geq h((M-\delta)q)h \\
        & = (M-\delta)h^2.
    \end{align*}
    Taking norms, we get
    \[\norm{hah} \geq (M-\delta)\norm{h^2} = M-\delta > M - \varepsilon.\]
    In our notation, this means 
    \begin{equation}\label{eq:pinching2}
        \norm{hE(x)h} > \norm{E(x)} - \varepsilon.
        \tag{2}
    \end{equation}

    Since (1) holds for \emph{some} $0 \neq y \in H_+$ with $\norm{y} = 1$ and (2) holds for \emph{all} $0 \neq h \in H_+$ with $\norm{h} = 1,$ both equations can be satisfied simultaneously by some element of $H$. 
    In particular, the conditions of Eq. (\ref{eq:pinching}) are satisfied, so $E$ has the pinching property.
\end{proof}

\begin{rem}
    It follows immediately that analogous results to Lemma \ref{thm:Osaka1}, Proposition \ref{prop:pinchingSP}, and Theorem \ref{thm:pinchingMain} hold when we replace ``$E$ has the pinching property'' with ``$E$ is outer''.
\end{rem}

\begin{exmp}\cite[Example 2.4]{MR1862184} \label{exmp:discreteGroup}
    Let $G$ be a discrete group and let $A$ be a (unital) simple C*-algebra. If $\alpha: G \curvearrowright A$ is outer, then the canonical conditional expectation $E: A \rtimes_{r, \alpha} G\twoheadrightarrow A$ is outer. In particular, if $A$ has Property (SP) then so does $A \rtimes_{r, \alpha} G.$

    We note that, as written, \cite[Example 2.4]{MR1862184} requires that $A$ is unital. 
    However, this is not necessary; the proof relies mainly on \cite[Lemma 3.2]{MR634163}, in which no such assumption is made. 
\end{exmp}
We shall next establish similar conclusions in the opposite case, where $A$ is commutative. 
This will also be useful in Section \ref{sec:Boundaries}. 

\smallskip
For a C*-algebra $A$, we say that $\alpha \in \operatorname{Aut}(A)$ is {\bf properly outer} if for any nonzero invariant ideal $I \subseteq A$ and any $\beta \in \operatorname{Inn}(I)$, $\norm{\alpha|_I - \beta} = 2.$
If $\alpha: G \curvearrowright A$ and $\alpha_g$ is properly outer for all $g \neq e$, we just say that $\alpha$ is properly outer. 
If $G$ is discrete and $A = C_0(X)$ then $\alpha: G \curvearrowright A$ is properly outer if and only if it is {\bf topologically free}, meaning that for any $g_1, \ldots, g_n \in G \setminus\{e\},$ the set $\cap_{i=1}^{n} \{x \in X: g_ix \neq x\}$ is dense in $X$.
This is further equivalent to $\operatorname{int}(\{x \in X: gx = x\}) = \emptyset$ for all $g \in G \setminus \{e\}$ (see \cite[Remarks]{MR1258035}).

In the sequel, we will pass between the various characterizations of proper outerness / topological freeness without comment.  

\begin{lem}\label{lem:properlyOuter}
    Let $G$ be a discrete group and let $A := C_0(X)$ be a commutative C*-algebra. If $\alpha: G \curvearrowright A$ is properly outer, then the canonical conditional expectation $E: A \rtimes_{r, \alpha} G \twoheadrightarrow A$ is outer. 
\end{lem}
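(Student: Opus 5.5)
The plan is to verify outerness directly from the definition: given $x \in A \rtimes_{r,\alpha} G$ with $E(x) = 0$ and a nonzero hereditary subalgebra $H \subseteq A = C_0(X)$, produce norm-one $h \in H_+$ with $\norm{hxh}$ arbitrarily small. Hereditary subalgebras of $C_0(X)$ are exactly $C_0(U)$ for open $U \subseteq X$, and $H \neq 0$ means $U \neq \emptyset$. Write $u_g$ for the canonical unitaries (in the multiplier algebra if $A$ is nonunital). Since $E$ is contractive and $E(x)=0$, given $\varepsilon>0$ we can approximate $x$ within $\varepsilon$ by a finite sum $x_0 = \sum_{g \in F} a_g u_g$ with $F \subseteq G\setminus\{e\}$ finite and $a_g \in C_0(X)$. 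To do this, approximate $x$ by a finite sum $y$ and set $x_0 := y - E(y)$; this gives $\norm{x - x_0} \leq 2\norm{x-y}$. Because $\norm{h(x-x_0)h} \leq \norm{x - x_0}$ for $\norm{h}\le 1$, it suffices to handle $x_0$.

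The key step uses topological freeness to shrink $U$ so that every $g \in F$ moves it off itself. The set $\bigcap_{g\in F}\{z : gz \neq z\}$ is open (by Hausdorffness and continuity) and dense, so it meets $U$ at some point $z_0$. For each $g \in F$ we have $g z_0 \neq z_0$, so we can choose a neighbourhood $V_g$ of $z_0$ with $gV_g \cap V_g = \emptyset$, using Hausdorff separation and continuity of $z\mapsto gz$. Set $V := U \cap \bigcap_{g\in F} V_g$, which is a nonempty open set. By Urysohn, pick $h \in C_c(V)_+ \subseteq H_+$ with $\norm{h}=1$.

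Then we compute $h a_g u_g h = a_g\, h\, \alpha_g(h) u_g$ (with the appropriate convention $u_g h u_g^* = \alpha_g(h) = h\circ g^{-1}$, or with $g$ in place of $g^{-1}$; we would shrink using both $g$ and $g^{-1}$ to be safe). Now $\alpha_g(h)$ is supported in $gV$ (respectively $g^{-1}V$), which is disjoint from $V \supseteq \operatorname{supp} h$. Hence $h\,\alpha_g(h)=0$ for every $g\in F$, so $h x_0 h = 0$, and therefore $\norm{hxh} < 2\varepsilon$. This proves $\inf = 0$.

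The main obstacle is bookkeeping rather than depth. One point is making the approximation argument clean in the nonunital case, which is handled by working in the multiplier algebra or by using elements $a_g u_g \in C_c(G,A)$ directly. The other is ensuring the chosen $V$ works simultaneously for all $g \in F$; intersecting the finitely many neighbourhoods and including $g^{-1}$ handles this. The only input from topological freeness is density (equivalently, nonemptiness within $U$) of the finitely-many-element free set, which is exactly the characterization recalled before the statement.
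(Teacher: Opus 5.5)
Your proposal is correct and follows essentially the same route as the paper. Both arguments approximate $x$ by a finite sum $\sum a_g u_g$ over $g \neq e$ (after subtracting $E$), use density of $\bigcap_{g}\{z : gz \neq z\}$ to find a nonempty open $V \subseteq U$ with $V \cap gV = \emptyset$ for every $g$ involved, and then kill each term through the covariance relation using some $h$ supported in $V$. Your explicit tracking of the factor $2$ in the approximation fills in a detail the paper glosses over. Shrinking also for $g^{-1}$ is harmless but redundant, since $V \cap gV = \emptyset$ is equivalent to $g^{-1}V \cap V = \emptyset$.
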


\begin{proof}
 Let $u_g, g \in G$ be the standard unitaries (in $M(A \rtimes_{r, \alpha} G)$, the multiplier algebra of the reduced crossed product) which implement the properly outer action $\alpha$.
 Given $x \in A \rtimes_{r, \alpha} G$ with $E(x) = 0$, we approximate $x$ by $y = \sum_{i=1}^{n} a_{i} u_{g_i}$ where $a_i \in C_0(X)$ and $g_1, \ldots, g_n$ are distinct elements in $G \setminus \{e\}.$ 
 Here, the fact that we can take finite approximations by elements in $G \setminus \{e\}$ is because $E(x) = 0$ (if $z$ is a finite approximation of $x$ which has an identity term, then we replace it by $y = z - E(z)$, which does not).

 Let $H$ be an arbitrary nonzero hereditary subalgebra of $C_0(X)$, which we can identify with $C_0(U)$ where $U \subseteq X$ is a nonempty open subset of $X$.
 Since $A$ is commutative, proper outerness of $\alpha$ means exactly that the set ${\cap_{i=1}^{n} \{x \in X: g_ix \neq x\}}$ is dense in $X$.
 In particular, 
 $$Y:= U \cap \left(\cap_{i=1}^{n} \{x \in X: g_ix \neq x\}\right) \neq \emptyset.$$
 For any $v \in Y$, we can find a nonempty open neighbourhood of $v$, say $ V \subseteq U$, such that $V \cap g_iV = \emptyset$ for each $i = 1, \ldots, n$.\footnote{
We illustrate how to find this open set when $n=1$. By topological freeness, there is a point $v\in Y$ such that $gv\neq v.$ Since $X$ is Hausdorff, there are disjoint open subsets $V_1, V_1'\subseteq U$ such that $v\in V_1$ and $gv\in V_1'.$ Now consider $V:=V_1\cap g^{-1}V_1'$ and notice $v\in V$ is thus nonempty and open. 
Finally, $V\cap gV= V_1\cap g^{-1}V_1'\cap gV_1\cap V_1'\subseteq V_1\cap V_1'=\emptyset.$
 }  
 
 Next, pick $h \in H_+$ with $\norm{h} = 1$ and $\operatorname{supp}(h) \subseteq V$. For each $g_i$, the covariance relation means that $h a_i u_{g_i}h = h a_i \alpha_{g_i}(h)u_{g_i}$.
 Moreover, the supports of $h$ and $\alpha_{g_i}(h)$ are disjoint because $\operatorname{supp}(h) \subseteq V$ and $\operatorname{supp}(\alpha_{g_i}(h)) \subseteq g_iV$. 
 Therefore, $h a_i \alpha_{g_i}(h)u_{g_i} = 0$ for each $i = 1, \ldots, n$ so that 
 $$\norm{h y h} \leq \sum_{i=1}^n \norm{h a_i u_{g_i} h} = \sum_{i=1}^n \norm{h a_i \alpha_{g_i}(h) u_{g_i}} =0.$$
 Since $\norm{hxh} = \norm{h(x-y)h} \leq \norm{x-y}$ and elements of the form $y$ approximate $x$ arbitrarily well, we get  \[ \inf\left\{\norm{hxh}: h \in H_+, \norm{h} = 1\right\} = 0.\]
 That is, the conditional expectation $E: A \rtimes_{r, \alpha} G \twoheadrightarrow A$ is outer, as required.
\end{proof}

\begin{rem}\label{rem:partialAction}
    A related result has been shown in the more general setting where $X$ is a locally compact Hausdorff space, $G$ is a discrete group, $\alpha$ is a \emph{partial action} of $G$ on $C_0(X)$, and $C_0(X) \rtimes_{r, \alpha} G$ is the (partial) reduced crossed product. 
   In our terminology, \cite[Proposition 2.4]{MR1905819} shows that if $\alpha$ is topologically free, then the canonical faithful conditional expectation $E: C_0(X) \rtimes_{r, \alpha} G \twoheadrightarrow C_0(X)$ has the pinching property. 
\end{rem}

\begin{exmp}\label{exmp:properlyOuter}
     Let $G$ be a discrete group, let $A := C_0(X)$ be a commutative C*-algebra, and let $\alpha: G \curvearrowright C_0(X)$ be topologically free (properly outer). If $C_0(X)$ has Property (SP) then so does $C_0(X) \rtimes_{r, \alpha} G.$ 

     By Remark \ref{rem:partialAction}, an analogous result also whenever $\alpha$ is a topologically free partial action of $G$ on $A$.
\end{exmp}

\subsection{Property (SP) and Discrete Inclusions}\label{subsec:DiscreteInclusions}
In this section, we shall explore the permanence of Property (SP) under \emph{discrete} inclusions of C*-algebras $A\subseteq B,$ which come equipped with a faithful expectation $E:B\twoheadrightarrow A$. 
Roughly speaking, discrete inclusions are meant to posses similar features to those arising as reduced crossed products by actions of discrete (quantum) groups. 

Indeed, the class of unital {\bf irreducible} (i.e. $A'\cap B\cong \mathbb{C}$) discrete inclusions was defined and characterized in \cite{MR5051789} as those inclusions arising as reduced crossed products by outer actions of \emph{unitary tensor categories} (UTC) along with algebraic data, and further studied in \cite{MR5042209, HHP25, HPN2}, and \cite{MR5099905} in the nonunital simple separable setting. 

This class is rather broad with many known examples arising from various different sources, including all finite \emph{Jones-Watatani index} inclusions, reduced crossed products by outer actions of \emph{discrete (quantum) groups}, and core inclusions from Cuntz algebras, among others.  
As such, discrete inclusions can be seen as C*-dynamical systems governed by a quantum symmetry, extending the classical theory involving groups.

\smallskip

We briefly gather the fundamental ingredients herein for the sake of self-containment and the reader's convenience. 
Further details should be pursued in the references listed above. 

Consider a fixed nondegenerate inclusion of $\sigma$-unital C*-algebras $A \subseteq B$ (i.e., both $AB, BA\subseteq B$ are dense) equipped with $E: B \twoheadrightarrow A,$ a faithful conditional expectation onto $A$. 
Associated to this data is the generalized GNS-construction
$$
\mathcal{E}:=L^2(B,E),
$$
which is the completion of $B$ in the $A$-valued norm induced by the $A$-valued inner product 
$$
\langle b\ |\ b'\rangle_A:= E(b^*b').
$$
Naturally, $\mathcal{E}$ becomes a right $B$-$A$ correspondence with this inner product and the obvious and right-$A$ action and adjointable left-$B$  action determined by right/left multiplication. 
Since $E$ is assumed faithful, there is an injective contractive map $B$-$A$ bimodular map
$$
\eta:B\to \mathcal{E},
$$
whose range is dense in $\mathcal{E}.$ 
Whenever the inclusion is unital, $\eta$ is determined by $\Omega:=\eta(1),$ and its range is the dense subspace $\eta(B) =B\Omega\subseteq \mathcal{E}.$

The {\bf projective quasi-normalizer} of the inclusion is given by 
\begin{align*}
    PQN(A\subseteq B):=\{b\in B\ |\ \exists K \in \mathsf{Bim}_{\mathsf{f}}(A),\ \eta(b)\in K\subseteq\eta(B)\}.
\end{align*}
Here, $\mathsf{Bim}_{\mathsf{f}}(A)$ denotes the UTC of \emph{dualizable} bimodules over $A,$ which coincide with those with a finite index in the sense of \cite{MR2085108}.  
In the unital setting, dualizable bimodules coincide with the finitely generated projective bimodules.  
That is, $K\in \mathsf{Bim}_{\mathsf{f}}(A)$ if viewed as a right (respectively, left) $A$ module, it is isomorphic to a complementable submodule of $A^{\oplus n}$ for some $n$ in $\mathbb{N}.$ 
By economy of language, we omitted $E$ from the definition of the projective quasi-normalizer, as it will always be inferred from context. 
It was shown in the unital case in \cite{MR5051789} that 
$$
A\subseteq PQN(A\subseteq B)\subseteq B
$$
is in fact a $*$-subalgebra. 
We say that $A\overset{E}{\subseteq} B$ is {\bf C*-discrete} if $PQN(A\subseteq B)$ is norm-dense in $B$. 

\begin{exmp}
    For each of the C*-algebras $A$, discrete groups $G$, and ``nice'' faithful conditional expectations $E: (A \rtimes_r G) \twoheadrightarrow A$ described in Examples \ref{exmp:finiteGroup}, \ref{exmp:discreteGroup}, and \ref{exmp:properlyOuter}, the inclusion \[ A \overset{E}\subseteq A \rtimes_r G\] is C*-discrete. 
\end{exmp}

\begin{prop}\label{prop:discreteOuterExp}
    Let $A \overset{E}\subseteq B$ be an irreducible C*-discrete inclusion.
    If $A$ is simple and separable, then $E$ is outer.
\end{prop}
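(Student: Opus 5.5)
By the definition of outerness, and since every nonzero hereditary C*-subalgebra $H\subseteq A$ is separable, I would reduce to the following: fix $x\in B$ with $E(x)=0$ and a nonzero hereditary subalgebra $H\subseteq A$, and show that $\inf\{\norm{hxh}: h\in H_+,\ \norm{h}=1\}=0$. Because $PQN(A\subseteq B)$ is norm-dense and the map $x\mapsto x-E(x)$ is continuous, it suffices to treat $x\in PQN(A\subseteq B)$ with $E(x)=0$. Such an $x$ has $\eta(x)$ lying in some $K\in\mathsf{Bim}_{\mathsf f}(A)$ with $K\subseteq\eta(B)$. I would decompose $K$ into irreducible dualizable bimodules (semisimplicity of $\mathsf{Bim}_{\mathsf f}(A)$, available because $A$ is simple). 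Irreducibility $A'\cap B=\mathbb{C}$, via the characterization of \cite{MR5051789}, forces the trivial bimodule ${}_AA_A$ to appear in $L^2(B,E)$ with multiplicity one, namely as $\overline{\eta(A)}$. Since $E(x)=0$, $\eta(x)$ is orthogonal to $\eta(A)$. Hence $x$ is a finite sum $x=\sum_i x_i$, where each $\eta(x_i)$ lies in an irreducible $K_i\not\cong {}_AA_A$.

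The second step is to treat a single such $x_i$. A nontrivial irreducible dualizable $A$-$A$ bimodule $K_i$ should behave like an "outer automorphism": for every $a\in A$ we have $a\,\eta(x_i)=\eta(ax_i)$ and $\eta(x_i)a=\eta(x_ia)$, and if the compression $h x_i h$ stayed uniformly large for $h$ in $H$, then weak-limit or approximation arguments should produce a nonzero $A$-bimodular map ${}_AA_A\to K_i$, which is impossible by irreducibility and non-triviality. To make this quantitative, I would follow the Kishimoto-type argument \cite[Lemma 3.2]{MR634163} used in Example \ref{exmp:discreteGroup}. Using separability and simplicity of $A$, I would choose a decreasing sequence of positive contractions $h_n\in H$ of norm one that are "asymptotically central" in the relevant sense, so that $h_n x_i h_n\to 0$ simultaneously for the finitely many $i$. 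A cleaner alternative is to use a Pimsner--Popa basis $\{b_j\}$ of $K_i$ and write $x_i=\sum_j x_{ij}$, where each $x_{ij}$ acts like a "twisted" element $a\mapsto$ (left action of $A$ on $K_i$). If $\norm{h x_i h}\ge\delta$ for all normalized $h\in H_+$, then a state/pure-state excision argument (Akemann--Anderson--Pedersen excision of pure states on separable simple $A$) applied to $h$ excising a pure state $\varphi$ would yield that the functional $a\mapsto\varphi(\langle \eta(x_i)\,|\, a\,\eta(x_i)\rangle)$ implements an intertwiner from the GNS representation of $\varphi$ to its "twist" by $K_i$. That would produce a nonzero bimodule map ${}_AA_A\to K_i$ or $K_i\to{}_AA_A$, a contradiction.

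Finally, I would assemble the pieces: given $\varepsilon>0$, approximate $x$ within $\varepsilon$ by $\sum_i x_i$ as above, find a single normalized $h\in H_+$ making every $\norm{hx_ih}$ small (by excising one pure state simultaneously for finitely many terms), and conclude $\norm{hxh}<2\varepsilon$. The main obstacle is the second step: making rigorous that a nontrivial irreducible dualizable bimodule admits no "asymptotic fixed vectors" along excising sequences, i.e.\ a bimodule version of Kishimoto's lemma for outer automorphisms. For this I would first try pure-state excision combined with the finite Pimsner--Popa basis, reducing the claim to the absence of nonzero $A$-central vectors in $K_i$. I would keep in mind that separability is needed for the excision step and simplicity for semisimplicity of the category.
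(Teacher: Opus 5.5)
Your reduction to the projective quasi-normalizer matches the paper's concluding density argument: approximate $x$ by $b\in PQN(A\subseteq B)$, pass to $b-E(b)$, and use $\norm{hxh}\le\norm{x-(b-E(b))}+\norm{h(b-E(b))h}$. The gap is your second step, which carries all the content of the proposition. That step asks, for $b\in PQN(A\subseteq B)$ with $E(b)=0$ and any nonzero hereditary $H\subseteq A$, for a normalized $h\in H_+$ with $\norm{hbh}$ small. You flag this as the main obstacle and only sketch it, and the mechanism you sketch does not work.

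An intertwiner between the GNS representation of one excised pure state $\varphi$ and its twist by $K_i$ is information about a single irreducible representation. It does not yield a nonzero bimodule map ${}_AA_A\to K_i$. This fails already for $K_i={}_\alpha A$. The parity automorphism $\alpha=\bigotimes_n\mathrm{Ad}\,\mathrm{diag}(1,-1)$ of the CAR algebra $\bigotimes_n M_2$ is outer. Yet it fixes the pure product state $\varphi=\bigotimes_n\omega_{e_1}$, so $\pi_\varphi\circ\alpha\cong\pi_\varphi$. Similarly, ``weak-limit'' arguments only give central vectors in a bidual- or ultrapower-type completion of $K_i$, where irreducibility of $K_i$ gives no contradiction by itself; the same example shows outer automorphisms can become inner after passing to a weak closure. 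Getting from such local information to a global contradiction is exactly the nontrivial bimodule analogue of Kishimoto's lemma.

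The paper does not prove that step from scratch. It cites \cite[Lemma 4.13 (3)]{MR5099905}: for an irreducible C*-discrete inclusion with $A$ simple and separable, every $b\in PQN(A\subseteq B)$, every nonzero hereditary $H\subseteq A$ and every $\varepsilon>0$ admit $h\in H_+$ with $\norm{h}=1$ and $\norm{h(b-E(b))h}<\varepsilon$. With that lemma, your first step becomes unnecessary, namely the decomposition into irreducible dualizable bimodules and the multiplicity-one argument for ${}_AA_A$. So does your simultaneous treatment of the $x_i$; the density argument alone finishes the proof. Without the lemma, or a complete proof of it, your proposal does not establish the proposition.
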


\begin{proof}
    Let $b \in PQN(A \subseteq B)$. 
    By \cite[Lemma 4.13 (3)]{MR5099905}
    for any $\varepsilon > 0$ and any nonzero hereditary subalgebra $H \subseteq A$, there exists an element $h \in H_+$ with $\norm{h} = 1$ such that \[\norm{h(b-E(b))h} < \varepsilon.\] 
    In particular, if $E(b) = 0$, then
        \[\norm{hbh} < \varepsilon. \]
    That is, the restriction of the conditional expectation to $PQN(A \subseteq B)$ is outer.
    
    Since the inclusion $A\subseteq B$ is C*-discrete, $PQN(A \subseteq B)$ is dense in $B$. 
    In particular, for any $x \in B$ with $E(x) = 0$ and any $\varepsilon > 0$ we can find $b \in PQN(A \subseteq B)$ such that $\norm{x-b} < \varepsilon/4$.
    
    We have $b-E(b) \in PQN(A \subseteq B)$ and $E(b-E(b)) = 0,$ so we can find $h \in H_+$ with $\norm{h} = 1$ such that $\norm{h(b-E(b))h} < \varepsilon/2$.
    Moreover, \[\norm{x-(b-E(b))} \leq \norm{x-b} + \norm{E(b-x)} \leq \frac{\varepsilon}{2},\] which implies that
    \[ \norm{h(x-E(x))h} = \norm{hxh} \leq \norm{h(x-(b-E(b)))h} + \norm{h(b-E(b))h} < \varepsilon.\] 
    In particular, \[ \inf\left\{\norm{hxh}: h \in H_+, \norm{h} = 1\right\} = 0, \] as required.
    \end{proof}

With this established, we can now call on the general theory of permanence of Property (SP) under outer expectations, as established in Section \ref{subsec:outerSP}, to prove our next main result. 
In particular, we show that Property (SP) is preserved under those quantum symmetries which can be described by actions of UTCs. 

\begin{thm}[Theorem \ref{mainthm:discreteSP}]\label{thm:discreteSP}
    Let $ A \overset{E}\subseteq B$ be an irreducible C*-discrete inclusion.
    If $A$ is simple, separable, and has Property (SP), then $B$ has Property (SP). Moreover, if $D$ is an intermediate subalgebra of the inclusion, that is if $A \subseteq D \subseteq B$, then $D$ also has Property (SP).
\end{thm}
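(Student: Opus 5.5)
The first assertion follows directly from results already established. By Proposition \ref{prop:discreteOuterExp}, since $A \overset{E}\subseteq B$ is an irreducible C*-discrete inclusion with $A$ simple and separable, the expectation $E$ is outer. By Proposition \ref{prop:pinchingOuter}, $E$ therefore has the pinching property. Because $A$ has Property (SP), Theorem \ref{thm:mainSP} shows that $A \subseteq B$ is an SP-inclusion. Proposition \ref{prop:pinchingSP} then gives that $B$ has Property (SP). In particular, no unitality assumptions are needed, since those results were stated without them.

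For the intermediate algebra $A \subseteq D \subseteq B$, we restrict $E$ to $E_D := E|_D : D \twoheadrightarrow A$. This is a conditional expectation onto $A$, because $A \subseteq D$ and $E$ is idempotent, $A$-bimodular and contractive. It is faithful, since faithfulness is inherited by restriction.

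Outerness also passes to $E_D$ directly from the definition. If $x \in D$ satisfies $E_D(x) = 0$, then $x \in B$ and $E(x) = 0$. Hence, for every nonzero hereditary subalgebra $H \subseteq A$, the outerness of $E$ already proved gives $\inf\{\norm{hxh} : h \in H_+, \norm{h}=1\} = 0$. Note that the hereditary subalgebras $H$ in question live in $A$, which is common to both inclusions. So $E_D$ is outer.

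Applying Proposition \ref{prop:pinchingOuter} to $E_D$ shows that it has the pinching property. Since $A$ has Property (SP), the inclusion $A \subseteq D$ is an SP-inclusion by Theorem \ref{thm:mainSP}. Proposition \ref{prop:pinchingSP} then yields Property (SP) for $D$.

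The only point requiring care is this last step, the transfer of outerness to $D$. We deliberately avoid asking whether $A \subseteq D$ is itself C*-discrete, which would require knowing that $PQN(A\subseteq D)$ is dense in $D$. The direct restriction argument above sidesteps that question entirely, because the outerness condition quantifies only over elements $x$ and over hereditary subalgebras of $A$.
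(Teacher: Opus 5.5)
Your proposal is correct and is essentially the paper's proof. For $B$, you chain Proposition \ref{prop:discreteOuterExp}, Proposition \ref{prop:pinchingOuter}, Theorem \ref{thm:mainSP} and Proposition \ref{prop:pinchingSP}; for an intermediate $D$, you restrict to $E_D = E|_D$, note that faithfulness and outerness survive restriction (the outerness condition only quantifies over elements with vanishing expectation and hereditary subalgebras of $A$), and rerun the same chain, which is exactly the paper's observation spelled out in slightly more detail.
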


\begin{proof}
   If $A$ has Property (SP) then $A \subseteq B$ is an SP-inclusion, so the fact that $B$ has Property (SP) follows from Propositions \ref{prop:pinchingOuter}, \ref{prop:pinchingSP}, and \ref{prop:discreteOuterExp}. 
   The fact that \emph{any} intermediate subalgebra $D$ also has Property (SP) follows in the same way, after observing that outerness of the expectation is preserved when we restrict to intermediate subalgebras. 
  That is, for any intermediate subalgebra $D$ of our inclusion $A \subseteq B$, the conditional expectation $E_D: D \twoheadrightarrow A$ given by $E_D = E|_D$ is both faithful and outer. 
\end{proof}

Using Theorem \ref{thm:discreteSP}, we can describe quantum analogues of Example \ref{exmp:discreteGroup}.
We include the following as an illustrative example and refer the reader to consult the literature on the operator algebraic approach to locally compact quantum groups (see e.g., \cite{MR3204665}, \cite{MR3675047}) for further details on the C*-algebras involved in the inclusion.
In Appendix \ref{sec:appendix}, we outline the argument that the inclusion described below is indeed irreducible and C*-discrete.  

\begin{exmp}\label{exmp:discreteQuantum}
    Let $\G = (C(\G), \Delta)$ be a (reduced) \emph{compact quantum group}, let $\widehat{\G} = (c_0(\widehat{\G}), \widehat{\Delta})$ be the dual \emph{discrete quantum group}. In particular, we have that
    \begin{equation*}
        c_0(\widehat{\G}) = \bigoplus^{c_0}_{u \in \mathsf{Irr}(\G)} B(H_u)
    \end{equation*} where $\mathsf{Irr}(\G)$ refers to a fixed set of representatives of the equivalence classes of irreducible unitary representations of $\G$ and $B(H_u) \cong M_{n_u}(\mathbb{C})$ for $n_u := \dim H_u.$ 
    Let $A$ be a unital simple C*-algebra, let $\alpha: A \to M(c_0(\widehat{\G}) \otimes_{\min} A)$ be a \emph{left action} of $\widehat{\G}$ on $A$ that is an injective unital $\ast$-homomorphism satisfying 
    \begin{enumerate}[(i)]
       \item $(\widehat{\Delta} \otimes \operatorname{id})\alpha = (\operatorname{id} \otimes \alpha)\alpha$, and
       \item  $\overline{\operatorname{span}}^{\norm{\cdot}}\left\{(x \otimes 1)\alpha(a): x \in c_0(\widehat{\G}), a \in A\right\} = c_0(\widehat{\G}) \otimes_{\min} A$,
    \end{enumerate}
    and let $B :=  A \rtimes_{r, \alpha} \widehat{\G} = C^*(\alpha(A), C(\G) \otimes 1) \subseteq M(K(\ell^2(\widehat{\G})) \otimes A)$ be the reduced crossed product induced by the action.
    
    Suppose that $\alpha$ is \emph{outer}.
    Then, up to the identification of $A$ with its canonical image $\alpha(A) \subseteq B$, one can show that \[A \subseteq A \rtimes_{r, \alpha} \widehat{\G},\] is a unital, irreducible C*-discrete inclusion; see Appendix \ref{sec:appendix} for details.
    If $A$ is separable and has Property (SP), then by Theorem \ref{thm:discreteSP} so does $B =  A \rtimes_{r, \alpha} \widehat{\G}.$ 
\end{exmp}
    
We can also consider a dual class of examples.
In favour of a succinct exposition, we shall again gloss over technical details regarding compact quantum groups and their representations, which are well-known to experts.
We refer the interested reader to \cite{MR3204665, MR3675047, MukoharaThesis}.
\begin{exmp} 
     Let $\mathbb{G}$ be a compact quantum group and $B$ be a separable simple C*-algebra with a \emph{minimal} faithful action $\alpha:B\to B\otimes C(\mathbb{G}).$
     Recall the fixed-point algebra is given by
     $$
     A:=\{b\in B|\ \alpha(b) = b\otimes 1\},
     $$
     and we assume it is simple and stable.
     With the associated haar state $h$ on $\mathbb{G}$ we can endow the inclusion 
     $
     A\subseteq B
     $
    with the faithful expectation 
    $$
    E:= (1_B\otimes h)\circ \alpha: B \twoheadrightarrow A
    $$
    so that the resulting inclusion is irreducible and C*-discrete. 
    As such, we obtain the decomposition as $A$-$A$ correspondences
    $$
    \mathcal{E}\cong \bigoplus_{u \in \mathsf{Irr}(\mathbb{G})}B_u\otimes H_u.
    $$
    Here,$H_u$ are the so-called \emph{multiplicity spaces} which are finite dimensional Hilbert spaces of the same dimension as the irreducible representation $u,$ spanned by isometries in the multiplier algebra $M(B)$ satisfying certain commutation relations with the left $A$-multiplication. 
    The unitary tensor category of finite dimensional representations of $\mathbb{G}$ is denoted by $\mathsf{Rep}(\mathbb{G}),$ with a fixed complete set representatives of irreducible representations $\mathsf{Irr}(\mathbb G).$ 
    The $A$-$A$ correspondences $B_u$ are realized by endomorphisms $\rho_u: A\to A$ of finite index as ${}_{\rho_u}{A}_{A} \cong {}_{A}{B_u}_{A},$ where the right action is trivial and the left is implemented by $\rho_u.$

    In \cite{MR5051789} the $H_u$ are referred to as the the \emph{diamond spaces} parameterizing the bimodule embeddings $B_u\to \mathcal{E}$, and is shown that the assignment 
    \begin{align*}
        \mathbb{H}:\mathsf{Rep}(\mathbb{G})^{\rm{opp}}&\to \mathsf{Hilb}_{\mathsf{fd}}\\
        u&\mapsto H_u    
    \end{align*}
    gives a \emph{connected C*-algebra object} in the $\textsf{Ind}$-completion category $\mathsf{Vec}(\mathsf{Rep}(\mathbb{G}))$. 
    That is, the C*-tensor category $\mathsf{Vec}(\mathsf{Rep}(\mathbb{G}))$ consists of linear functors $\mathsf{Rep}(\mathbb{G})^{\rm{opp}}\to \mathsf{Vec}$ valued in complex vector spaces.
    For a detailed exposition on operator algebras in unitary tensor categories we refer the reader to \cite{MR3687214, MR3948170}.

    The assignment
    \begin{align*}
        F: \mathsf{Rep}(\mathbb G)&\to \mathsf{Bim}_{\mathsf{f}}(A)\\
        u&\mapsto B_u
    \end{align*}
    is a unitary tensor functor and thus a C*-algebra object in the C*-tensor category $\mathsf{Vec}(\mathsf{Rep}(\mathbb G)^{\rm{opp}})$ of linear functors $\mathsf{Rep}(\mathbb G)\to \mathsf{Vec}$ (after composing with the forgetful functor to vector spaces). 
    Together, $F$ and $\mathbb{H}$ yield the projective quasi-normalizer 
    $$
    PQN(A\subseteq B) = \bigoplus_{u\in \mathsf{Irr}(\mathbb G)}F(u)\otimes \mathbb{H}(u),
    $$
    witnessing C*-discreteness. 
    That is, 
    $
    PQN(A\subseteq B)\subseteq B
    $
    is dense in operator norm. 
    This then realizes $B$ as the reduced crossed product of the action $F$ of $\mathsf{Rep}(\mathbb G)$ on $A$ over the connected C*-algebra object $\mathbb{H}.$
    That is, 
    $$
    B\cong A\rtimes_{F,r}\mathbb{H}\subseteq \mathsf{End}(\mathcal{E}_{A}),
    $$
    where $\mathsf{End}(\mathcal{E}_{A})$ is the C*-algebra of adjointable endomorphisms of the right Hilbert $A$-module $\mathcal E.$

    Finally, if $A$ has Property (SP), then by Theorem \ref{thm:discreteSP}, so does $B$. 
\end{exmp}

\section{SP-inclusions from Boundaries}\label{sec:Boundaries}

We return now to the more general setting of SP-inclusions and consider an example which arises naturally from dynamics. 
In this section, $G$ always refers to a discrete group.

We let $\partial_F G$ denote the {\bf Furstenberg boundary} of $G$. The Furstenberg boundary is the universal compact $G$-space on which the action of $G$ is both \emph{minimal} and \emph{strongly proximal}. That is, it is the universal compact $G$-space such that the $G$-orbit $G_x := \{gx: g \in G\}$ is dense in $\partial_F G$ for all $x \in \partial_F G$ and for any pair of probability measures $\mu$ and $\nu$ on $\partial_F G$, there is a net $(g_i) \in G$ such that $\lim g_i\mu = \lim g_i\nu$. 
There is a natural embedding of the reduced group C*-algebra into the reduced crossed product of $C(\partial_F G)$ by $G$. 
We refer the reader to \cite{MR3652252} for further details regarding the Furstenberg boundary. 

Recall that a discrete group $G$ is called {\bf C*-simple} if $C_r^*(G)$ is simple.
We will freely use the various characterizations of C*-simplicity for discrete groups found in \cite{MR3652252}; in lieu of repeating precise statements of those results here, we provide references throughout. 

\begin{exmp}
    For each $n \in \mathbb{N}$, $n \geq 2$, the free group on $n$ generators, which we denote by $\mathbb{F}_n$, is C*-simple \cite{MR374334}. 
\end{exmp}

\begin{exmp}
     Recall that for $p$ prime, a \emph{Tarski monster group} of order $p$ is a finitely generated group in which every nontrivial subgroup is cyclic of order $p$. These exist for all primes $p > 10^{75}$ \cite{MR571100} and they are C*-simple \cite[Corollary 6.6]{MR3652252}. 
\end{exmp}

\begin{lem}\label{lem:csimpleSP}
    Let $G$ be a discrete C*-simple group and let $A := C(\partial_FG)$. Then $A \rtimes_r G$ has Property (SP). 
\end{lem}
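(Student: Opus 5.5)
The strategy is to reduce to Example \ref{exmp:properlyOuter}. That example says that if $G$ acts topologically freely on $X$ and $C(X)$ has Property (SP), then $C(X)\rtimes_r G$ has Property (SP). So it suffices to prove two facts for $X=\partial_F G$ with $G$ C*-simple:
\begin{enumerate}[(i)]
\item the action $G\curvearrowright \partial_F G$ is topologically free;
\item $C(\partial_F G)$ has Property (SP), i.e.\ every nonempty open subset of $\partial_F G$ contains a nonempty clopen subset.
\end{enumerate}
Lemma \ref{lem:properlyOuter} then makes the canonical expectation outer. Proposition \ref{prop:pinchingOuter} gives the pinching property. Since $C(\partial_FG)$ has (SP), the inclusion $C(\partial_F G)\subseteq C(\partial_F G)\rtimes_r G$ is an SP-inclusion by Theorem \ref{thm:mainSP}, and Proposition \ref{prop:pinchingSP} finishes the argument.

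For (i), I will quote the Kalantar--Kennedy / Breuillard--Kalantar--Kennedy--Ozawa characterization from \cite{MR3652252}. A discrete group $G$ is C*-simple if and only if $G\curvearrowright\partial_F G$ is (topologically) free. In fact the action is then free, since stabilizers of boundary points are amenable and C*-simplicity forces them to be trivial. Freeness certainly implies topological freeness in the sense used here.

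For (ii), the key input is that $\partial_F G$ is extremally disconnected. This holds because $C(\partial_F G)$ is $G$-injective, hence injective as a commutative C*-algebra (Hamana). A compact Hausdorff space $X$ with $C(X)$ injective is Stonean: closures of open sets are open. I will use this to show that $\partial_F G$ is totally disconnected, so in fact $C(\partial_F G)$ has real rank zero. Given a nonempty open $U$, pick $x\in U$ and an open $V$ with $x\in V\subseteq \overline V\subseteq U$, using regularity. Then $\overline V$ is clopen, since it is the closure of an open set, and it is nonempty and contained in $U$. Its characteristic function is a nonzero projection in $C_0(U)$, which is exactly the hereditary subalgebra associated to $U$. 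This gives Property (SP) for $C(\partial_FG)$, even without invoking real rank zero.

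The main obstacle is bookkeeping rather than substance. I need to cite precisely that $C(\partial_FG)$ is injective, hence $\partial_F G$ is extremally disconnected, and that C*-simplicity is equivalent to (topological) freeness of the boundary action. Both results are in \cite{MR3652252}. If the stated characterization there is phrased via freeness at a single point or via amenable stabilizers, I would add a short remark. Minimality of the action spreads freeness at one point to a dense $G_\delta$ set of free points. Since $\partial_FG$ is Stonean, the fixed-point set of any $g\ne e$ has empty interior, which is exactly topological freeness.
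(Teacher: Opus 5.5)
Your proposal is correct and follows the paper's proof: topological freeness of $G \curvearrowright \partial_F G$ comes from the cited characterization of C*-simplicity, and Property (SP) for $C(\partial_F G)$ comes from its injectivity (the paper cites that it is an AW$^*$-algebra, which you unpack directly via extremal disconnectedness), after which Lemma \ref{lem:properlyOuter} and the mechanism of Example \ref{exmp:properlyOuter} finish the argument. Your side claim that the action is actually free is loosely justified, since the reason is the cited theorem rather than amenability of stabilizers, but it is harmless because the argument only uses topological freeness.
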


\begin{proof}
   Recall that $G$ being C*-simple means that the boundary action of $G$ on $A := C(\partial_FG)$ is topologically free \cite[Theorem 1.5]{MR3652252}. Since $A$ is commutative, this is equivalent to the action being properly outer. In particular, by Lemma \ref{lem:properlyOuter},  $E: A \rtimes_r G \twoheadrightarrow A$ is an outer expectation.
   
    Moreover, it is well-known that $A = C(\partial_F G)$ is an AW$^*$-algebra (see e.g., \cite[Remark 3.16]{MR3652252}), hence has Property (SP). 
   As in Example \ref{exmp:properlyOuter}, it follows that $A \rtimes_r G$ has Property (SP).
\end{proof}

\begin{thm}\label{thm:csimpleSP}
    Let $G$ be a discrete C*-simple group. 
    Then 
    $$
    C_r^*(G) \subseteq C(\partial_F G) \rtimes_r G
    $$
    is an SP-inclusion.
\end{thm}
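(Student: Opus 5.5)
The plan is to obtain this as an immediate consequence of Theorem \ref{thm:mainSP} combined with Lemma \ref{lem:csimpleSP}. By Theorem \ref{thm:mainSP}, if the larger algebra $B$ of an inclusion $A \subseteq B$ has Property (SP), then the inclusion is an SP-inclusion. So it suffices to know that $C(\partial_F G) \rtimes_r G$ has Property (SP) whenever $G$ is C*-simple, and this is exactly the content of Lemma \ref{lem:csimpleSP}.

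The steps, in order, are as follows. First, I identify $C_r^*(G)$ with its canonical copy $C^*(\{u_g : g \in G\}) \subseteq C(\partial_F G) \rtimes_r G$ generated by the implementing unitaries. This is a genuine (unital) inclusion because $\partial_F G$ is compact, so $C(\partial_F G)$ is unital and $\lambda_g \mapsto u_g$ extends to a faithful embedding of reduced algebras. The faithfulness follows, for instance, from compatibility of the canonical trace $\tau$ on $C_r^*(G)$ with the composition of the canonical expectation $E$ and a $G$-invariant state; alternatively, it is the standard fact quoted when the embedding is introduced.

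Second, I invoke Lemma \ref{lem:csimpleSP}. Its argument runs as follows. C*-simplicity gives topological freeness of $G \curvearrowright \partial_F G$. Lemma \ref{lem:properlyOuter} then makes the canonical expectation outer, and Proposition \ref{prop:pinchingOuter} upgrades this to the pinching property. Since $C(\partial_F G)$ is AW$^*$, hence has Property (SP), Proposition \ref{prop:pinchingSP} (via Theorem \ref{thm:mainSP} applied to $C(\partial_F G)\subseteq C(\partial_F G)\rtimes_r G$) gives Property (SP) for the crossed product.

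Finally, I apply Theorem \ref{thm:mainSP} with the intermediate algebra equal to $B = C(\partial_F G)\rtimes_r G$ itself. For any nonzero $a \in C_r^*(G)_+$, this yields a nonzero projection in $\overline{aBa}$.

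There is essentially no obstacle: all the analytic work sits in Lemma \ref{lem:csimpleSP}. The only point deserving care is the first step, namely that the map $C_r^*(G) \to C(\partial_F G)\rtimes_r G$ is injective, so that we truly have an inclusion. If needed, I would verify this by composing with $E$ and a $G$-invariant probability measure to see that the canonical trace is recovered, noting that the trace is faithful on $C_r^*(G)$.
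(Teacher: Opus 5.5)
Your proposal is correct and matches the paper's proof, which likewise obtains the result immediately from Lemma \ref{lem:csimpleSP} and Theorem \ref{thm:mainSP}. One small correction to your side remark on injectivity: $\partial_F G$ carries no $G$-invariant probability measure when $G$ is non-amenable (as every nontrivial C*-simple group is), but your argument does not need one, since $E$ already maps $C^*(\{u_g\})$ onto $\mathbb{C}1$ and so its restriction there is itself a faithful tracial state agreeing with $\tau$.
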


\begin{proof}
    This is immediate from Lemma \ref{lem:csimpleSP} and Theorem \ref{thm:mainSP}.
\end{proof}

This result raises the following question, which we were unable to resolve.

\begin{quest}
    Let $G$ be a (countable) discrete C*-simple group. Is the embedding $C_r^*(G) \subseteq C(\partial_F G) \rtimes_r G$ a RR0-inclusion?
\end{quest}

\smallskip
Finally, let us consider the inclusion $C_r^*(G) \subseteq C(\partial_F G) \rtimes_r G$ under the more general assumption that $G$ is a discrete, non-amenable group.  

\begin{exmp}\label{exmp:furstenberg}
    Consider the (countable) discrete, non-amenable group ${G := \F_2 \times \Z}$.
    By the standard theory of reduced group C*-algebras (see e.g., \cite[Example II.10.3.15]{Blackadar}), we know that 
    \[ C_r^*(G) \cong C_r^*(\F_2) \otimes C(\mathbb{T}) =: A, \] where $\otimes$ is the minimal (spatial) tensor product.
    
    We would also like to describe $C(\partial_F G) \rtimes_r G$ explicitly.    
    First, notice that the {\bf amenable radical} (that is, the largest normal amenable subgroup) of $\F_2 \times \Z$ is $\{e\} \times \Z$ because $\F_2$ is C*-simple (and therefore does not contain any nontrivial normal amenable subgroups). 
    It follows from \cite[Proposition 3.19]{MR3652252} that \[\partial_F(\F_2 \times \Z) \cong \partial_F\left((\F_2 \times \Z) /(\{e\} \times \Z)\right) \cong  \partial_F(\F_2).\] 
    
    It is also known that the amenable radical of a discrete group $G$ always acts trivially on its Furstenberg boundary \cite[Proposition 8]{MR1998109}. 
    In particular, the action of $\Z \cong \{e\} \times \Z$ is trivial on $C(\partial_F (\F_2 \times \Z)) = C(\partial_F\F_2).$ 
    
    We now consider the induced action of $\Z$ on $D:= C(\partial_F \F_2) \rtimes_r \F_2$ and show it is trivial.
    For each $n \in \Z$, the induced automorphism in $\operatorname{Aut}(D)$ is determined by 
    $$f \mapsto n \cdot f$$ for all  $f \in C(\partial_F G)$
    and $$(u_s) \mapsto u_{nsn^{-1}},$$ 
    for all standard unitaries $u_s, s \in \F_2$ implementing the action on $D$.
    Since $\Z$ acts trivially on $\partial_F \F_2$, we get $n \cdot f = f$ for all $f \in C(X)$ and since $G = \F_2 \times \Z$ is a direct product, $n \in \Z$ commutes with $s \in \F_2$, so $nsn^{-1} = s.$

    Putting everything together, we have 
    \begin{align*}
        C(\partial_FG) \rtimes_r G  = C(\partial_F (\F_2 \times \Z)) \rtimes_r (\F_2 \times \Z) &\cong (C(\partial_F \F_2) \rtimes_r\F_2) \rtimes_r \Z\\
        &\cong D \otimes C(\mathbb{T}) =: B.
    \end{align*}
Here, the first isomorphism involving iterated crossed products is an application of standard results (see e.g., \cite[Proposition 3.11]{MR2288954}) and the second isomorphism follows from the fact that the induced action of $\Z$ on $D = C(\partial_F \F_2) \rtimes_r \F_2$ is trivial (\cite[Example II.10.3.15(i)]{Blackadar}).

    Suppose that for any $0 \neq a \in A_+$, the hereditary subalgebra $\overline{a B a}$ contains a nonzero projection. 
    Take some nonzero $f \in C(\mathbb{T})_+$ with open support $U := \{z \in \mathbb{T}: f(z) \neq 0\}$, which is a nonempty proper open connected arc. 
    Use this function to define an element $0 \neq a:= 1 \otimes f \in A_+.$
    In particular, this means that
    \begin{align*}
        \overline{aBa} = \overline{(1 \otimes f)(D \otimes C(\mathbb{T}))(1 \otimes f)} &= D \otimes \overline{fC(\mathbb{T})f} \\
        &= D \otimes C_0(\{z \in \mathbb{T}: f(z) \neq 0\})  \\
        &= D \otimes C_0(U).
    \end{align*}
    
    Suppose that $D \otimes C_0(U)$ contains a nonzero projection $p$.
    Under the usual identification $D \otimes C_0(U) \cong C_0(U, D)$, we can view $p$ as a continuous $D$-valued function on $U$. 
    We will arrive at a contradiction by arguing as in Example \ref{exmp:Goodearl}.
    
    Since $p(z)$ is a projection for each $z \in U$, we have $\norm{p(z)} \in \{0,1\}$.
    By connectedness of $U$, the function $z \mapsto \norm{p(z)}$ is constant;
    since $p \neq 0$, we have $\norm{p(z)} = 1$ for all $z \in U$.
    But since $U$ is a proper open arc (hence non-compact), we must have that $p$ vanishes at infinity.
    But this contradicts the fact that $z \mapsto \norm{p(z)}$ is continuous and  $\norm{p(z)} = 1$ for all $z \in U$. 

    In other words, for our choice of $0 \neq a \in A_+$, the hereditary subalgebra $\overline{aBa} \subseteq B$ cannot contain a nonzero projection.
    In particular, \[ C_r^*(G) \cong C_r^*(\F_2) \otimes C(\mathbb{T}) \subseteq  (C(\partial_F \F_2) \rtimes_r\F_2) \otimes C(\mathbb{T}) \cong C(\partial_F G) \rtimes_r G\] is not an SP-inclusion. 
\end{exmp}

\begin{rem}\label{rem:InParticular}
    We note that the choice of $G$ in the preceding example was motivated by our desire to resolve a question raised in \cite[Remark 5.10]{GN25}.
    In particular, those authors asked whether $C_r^*(G) \subseteq C(\partial_F G) \rtimes_r G$ is a RR0-inclusion whenever $G$ is a countable, discrete, non-amenable group; Example \ref{exmp:furstenberg} shows that it need not even be an SP-inclusion, thereby providing a negative solution to the problem.
    
    We shall see that the same argument will go through for any discrete, non-amenable group $G:= N \times H$ where $N$ is nontrivial with trivial amenable radical and $H$ is a nontrivial abelian group which is \textbf{torsion-free} (that is, the only finite order element of $H$ is the identity).
    Whereas the proof is essentially the same as the details worked out in Example \ref{exmp:furstenberg}, we only give a sketch of the argument below.
\end{rem}

\begin{thm}[Theorem \ref{mainthm:Furstenberg}]\label{thm:Furstenberg}
    Let $N$ be a nontrivial discrete group with trivial amenable radical and let $H$ be a nontrivial discrete torsion-free abelian group. If $G:= N \times H$ then the embedding $C_r^*(G) \subseteq C(\partial_FG) \rtimes_r G$ is not an SP-inclusion. In particular, if $G$ is a countable, discrete, non-amenable group, then the embedding $C_r^*(G) \subseteq C(\partial_F G) \rtimes_r G$ need not be an SP-inclusion (and therefore not a RR0-inclusion). 
\end{thm}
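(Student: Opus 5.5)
The plan is to copy Example \ref{exmp:furstenberg}, replacing $\F_2$ by $N$ and $\Z$ by $H$. There are three steps: identify $\partial_F G$ and the crossed product as a tensor product with $C(\widehat{H})$, then build a positive element in $C_r^*(H)$ whose hereditary subalgebra has no nonzero projection, using connectedness of the dual group.

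\emph{Step 1 (amenable radical and boundary).} I first check that the amenable radical of $G = N\times H$ is exactly $\{e\}\times H$. It contains $\{e\}\times H$, since $H$ is abelian and hence amenable. Conversely, if $R \trianglelefteq G$ is amenable, then its image under the projection $G\to N$ is a normal amenable subgroup of $N$, as a quotient of an amenable group. That image is therefore trivial, so $R\subseteq \{e\}\times H$. By \cite[Proposition 3.19]{MR3652252} we get $\partial_F G\cong \partial_F N$. By \cite[Proposition 8]{MR1998109}, $H$ acts trivially on this boundary.

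\emph{Step 2 (crossed product decomposition).} Set $D := C(\partial_F N)\rtimes_r N$. I write $C(\partial_F G)\rtimes_r G\cong D\rtimes_r H$ via the iterated crossed product result \cite[Proposition 3.11]{MR2288954}. The induced action of $H$ on $D$ is trivial, because $H$ fixes $C(\partial_F N)$ and commutes with $N$. Hence $B:= C(\partial_FG)\rtimes_r G \cong D\otimes C_r^*(H)\cong D\otimes C(\widehat H)$, where $\widehat H$ is the compact Pontryagin dual. In the same way, $A:=C_r^*(G)\cong C_r^*(N)\otimes C(\widehat H)$. Under these identifications the inclusion is the tensor product of $C_r^*(N)\subseteq D$ with the identity on $C(\widehat H)$.

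\emph{Step 3 (no projection), the main point.} Because $H$ is torsion-free and discrete, $\widehat H$ is connected: a nontrivial clopen subset of $\widehat H$ would give a nontrivial idempotent in $C(\widehat H)\cong C^*(H)$, and $C^*(H)$ has no nontrivial projections when $H$ is torsion-free abelian (equivalently, $\widehat H$ has trivial component group exactly when $H$ is torsion-free). Since $H\neq\{e\}$, $\widehat H$ is not a point. Pick a nonempty open set $U\subsetneq \widehat H$ and $0\ne f\in C(\widehat H)_+$ with support $U$, and put $a:=1\otimes f$. Then $\overline{aBa}=D\otimes C_0(U)\cong C_0(U,D)$. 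The obstacle is that $U$ need not be connected, so I cannot argue on all of $U$ at once as in the example. Instead, suppose $p\in C_0(U,D)$ is a nonzero projection. Then $z\mapsto\norm{p(z)}$ is continuous on $\widehat H$ (extended by $0$) and takes values in $\{0,1\}$. So $K:=\{z:\norm{p(z)}=1\}$ is clopen in $\widehat H$, nonempty, and contained in $U\ne \widehat H$. This contradicts connectedness of $\widehat H$. Therefore $\overline{aBa}$ has no nonzero projection, and the inclusion is not an SP-inclusion.

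The final sentence of the theorem follows from the case $N=\F_2$, $H=\Z$, since $G$ is then countable and non-amenable, together with Proposition \ref{prop:RR0}.
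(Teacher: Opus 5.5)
Your proposal is correct and follows the same route as the paper. You identify the amenable radical as $\{e\}\times H$, so that $\partial_F G\cong\partial_F N$ with $H$ acting trivially; you split the inclusion as $(C_r^*(N)\subseteq C(\partial_F N)\rtimes_r N)\otimes C(\widehat H)$; and you use connectedness of $\widehat H$ to rule out projections in $\overline{aBa}=D\otimes C_0(U)$. Your clopen-set argument with $\{z:\|p(z)\|=1\}\subseteq U\subsetneq\widehat H$ sharpens the paper's ``as above'', which in Example~\ref{exmp:furstenberg} relied on $U$ being a connected arc. Two small fixes: cite \cite[Theorem 24.25]{MR551496} for connectedness of $\widehat H$ instead of the circular remark about projections in $C^*(H)$, and choose $f$ first (e.g.\ by Urysohn) so that $U$ is automatically a cozero set when $\widehat H$ is not metrizable.
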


\begin{proof}[Sketch of proof.]
    If $N \neq \{e_N\}$ is a discrete group with trivial amenable radical and $H \neq \{e_H\}$ is a discrete torsion-free abelian group, then it is clear that $G := N \times H$ is discrete and non-amenable. 
    Moreover, \[ C_r^*(G) = C_r^*(N \times H) \cong C_r^*(N) \otimes C(\widehat{H}),\] where $\widehat{H}$ is the \emph{Pontryagin dual} of $H$ (see e.g., \cite[Chapter 24]{MR551496} for details).
    Since $H \neq \{e_H\}$ is a discrete torsion-free abelian group, we know that $\widehat{H} = K \neq \{e_K\}$ is a compact connected abelian group \cite[Theorem 24.25]{MR551496}. 
    So the left-hand side of our inclusion is \[A := C_r^*(N) \otimes C(K).\]

    Now let us consider the other side of the inclusion. 
    We get
    \begin{align*}
        C(\partial_F G) \rtimes_r G = C(\partial_F(N \times H)) \rtimes_r (N \times H) &\cong (C(\partial_F N) \rtimes_r N) \rtimes_r H \\
        &\cong (C(\partial_F N) \rtimes_r N) \otimes C(\widehat{H}),
    \end{align*} where we have used the fact that $\{e\} \times H$ is the amenable radical of $G$ (since $N$ has trivial amenable radical and $H$ is amenable) and the standard isomorphism results cited in Example \ref{exmp:furstenberg}.
    The right-hand side of our inclusion is then \[B:= (C(\partial_F N) \rtimes_r N) \otimes C(K).\]

    Take a continuous function $0 \neq f \in C(K)_+$ with nonempty, proper open support $U:=\{z \in K: f(z) \neq 0\}$ and define $0 \neq a:= 1 \otimes f \in A_+$. 
    As above, the connectedness of $K \neq \{e_K\}$ acts as an obstruction to the existence of a nonzero projection in $\overline{aBa} \subseteq B.$
    So \[ A = C_r^*(N) \otimes C(K) \subseteq (C(\partial_F N) \rtimes_r N) \otimes C(K) = B\] is not an SP-inclusion.

    The ``in particular'' statement in Remark \ref{rem:InParticular} follows by taking $N$ and $H$ to be countable (as in Example \ref{exmp:furstenberg}) and by recalling that RR0-inclusions are necessarily SP-inclusions (Proposition \ref{prop:RR0}).
\end{proof}

We have seen that for a discrete group $G$, $C_r^*(G) \subseteq C(\partial_F G) \rtimes_r G$ is an SP-inclusion if $G$ is C*-simple (Theorem \ref{thm:csimpleSP}) but it need not be an SP-inclusion if we only assume that $G$ is non-amenable (Theorem \ref{thm:Furstenberg}).
Recall that the discrete groups with trivial amenable radical are a (strictly) intermediate class of groups between those which are C*-simple and those which are non-amenable \cite{MR3660307}. Equivalently, these are the discrete groups with {\bf unique trace property}, meaning that their reduced group C*-algebra admits a unique tracial state \cite[Theorem 1.3]{MR3735864}. 

\smallskip
We leave the following natural question(s) for future study.

\begin{quest}
    Let $G$ be a (countable) discrete group with unique trace property. Is $C_r^*(G) \subseteq C(\partial_FG) \rtimes_r G$ an SP-inclusion? Is it a RR0-inclusion?
\end{quest}

\appendix
\section{C*-discrete Inclusions from Discrete Quantum Groups}\label{sec:appendix}

In this appendix, we will justify that the inclusion described in Example \ref{exmp:discreteQuantum} is irreducible and C*-discrete.
We begin by recalling the construction in question.

\begin{exmp}
    Let $\G = (C(\G), \Delta)$ be a (reduced) \emph{compact quantum group}, let $\widehat{\G} = (c_0(\widehat{\G}), \widehat{\Delta})$ be the dual \emph{discrete quantum group}. In particular, we have that
    \begin{equation*}
        c_0(\widehat{\G}) = \bigoplus^{c_0}_{u \in \mathsf{Irr}(\G)} B(H_u)
    \end{equation*} where $\mathsf{Irr}(\G)$ refers to a fixed set of representatives of the equivalence classes of irreducible unitary representations of $\G$ and $B(H_u) \cong M_{n_u}(\mathbb{C})$ for $n_u := \dim H_u\in \mathbb{N}.$ 
    Let $A$ be a unital simple C*-algebra, let $\alpha: A \to M(c_0(\widehat{\G}) \otimes_{\min} A)$ be a \emph{left action} of $\widehat{\G}$ on $A$, that is
    an injective unital $\ast$-homomorphism satisfying 
    \begin{enumerate}[(i)]
        \item $(\widehat{\Delta} \otimes \operatorname{id})\alpha = (\operatorname{id} \otimes \alpha)\alpha$, and
        \item  $\overline{\operatorname{span}}^{\norm{\cdot}}\left\{(x \otimes 1)\alpha(a): x \in c_0(\widehat{\G}), a \in A\right\} = c_0(\widehat{\G}) \otimes_{\min} A$,
    \end{enumerate}
    and let $B :=  A \rtimes_{r, \alpha} \widehat{\G} = C^*(\alpha(A), C(\G) \otimes 1) \subseteq M(K(\ell^2(\widehat{\G})) \otimes A)$ be the reduced crossed product induced by the action. 
    
    Suppose that $\alpha$ is \emph{outer} (in the sense defined below).
    Then, up to the identification of $A$ with its canonical image $\alpha(A) \subseteq B$, one can show that \[A \overset{E}\subseteq A \rtimes_{r, \alpha} \widehat{\G}\] is a unital, irreducible C*-discrete inclusion.
    Here, $E: B \twoheadrightarrow A$ is the canonical faithful conditional expectation coming from the reduced crossed product construction. 
\end{exmp}

In order to show that this is an irreducible C*-discrete inclusion, we shall outline how to move between the operator algebraic setting of reduced crossed products by discrete quantum groups to the tensor category setting. 
We then appeal to the general theory of C*-discrete inclusions, which was developed in the language of the latter.

    Let $\mathsf{Rep}(\G)$ denote the UTC of finite-dimensional unitary representations of $\G$; every $U \in \mathsf{Rep}(\G)$ is (unitarily equivalent to) a finite direct sum of elements $u \in \mathsf{Irr(\G)}$ and we denote the trivial representation by $\mathbbm{1}$. 
    We also let $\mathsf{Corr}(A)$ denote the C*-tensor category of $A$-$A$ C*-correspondences, which contains the UTC $\mathsf{Bim_{f}}(A)$ as a full subcategory.
    Following the approach of \cite[Example 2.24]{MR4776902}, we will show that the action $\alpha: \widehat{\G} \curvearrowright A$ contains the same information as an action $\mathsf{Rep}(\G) \curvearrowright A$ via the construction of a \emph{unitary tensor functor} $F_\alpha: \mathsf{Rep}(\G) \to \mathsf{Bim_{f}}(A)$.
    We say that $\alpha$ is \textbf{outer} if $F_\alpha$ is fully faithful, following the convention of \cite[Definition 3.1]{MR5051789}.
    
    We make use of the categorical equivalence between $\mathsf{Rep}(\G)$ and the finite-dimensional nondegenerate $\ast$-representations $\pi_U: c_0(\widehat{\G}) \to B(H_U).$
    For each $U \in \mathsf{Rep}(\G)$ we may define a map $A \to B(H_U) \otimes A$ by setting \[\alpha_U := (\pi_U \otimes \operatorname{id}_A)\alpha. \] 
    Then $H_U \otimes A$ becomes an $A$-$A$ C*-correspondence when equipped with with left/right actions $$a \rhd (\xi \otimes b) = \alpha_U(a)(\xi \otimes b) \quad \mathrm{ and } \quad (\xi \otimes a) \lhd b = \xi \otimes (a b),$$ along with the $A$-valued inner product $$\braket{\xi \otimes a}{\eta \otimes b}_A = \braket{\xi}{\eta}_{H_U} a^*b.$$ 
    Moreover, since any \emph{intertwiner} $T : U \to V$ satisfies $T\pi_U(x) = \pi_V(x)T$,
 we get \[(T \otimes 1_A)\alpha_U(a) = \alpha_V(a)(T \otimes 1_A).\] 
 Therefore \[F_{\alpha}(U) = X_U := {}_{\alpha_U}{(H_U \otimes A)}_A\] and \[F_\alpha(T) = T \otimes 1_A\] defines a $\ast$-functor $F_\alpha: \mathsf{Rep}(\G) \to \mathsf{Corr}(A)$. 
    We may also define a unitary isomorphism $F_{\alpha}^1: A \to X_{\mathbbm{1}}$ by \[a \mapsto 1_\mathbb{C} \otimes a\] and a natural unitary isomorphism $F^2_{\alpha,U,V}: X_{U} \boxtimes X_V \to X_{U \otimes V}$ by \[((\xi \otimes a) \boxtimes(\eta \otimes b)) \mapsto \xi \otimes \alpha_V(a)(\eta \otimes b),\] under the convention that $$\pi_{U \otimes V} = (\pi_U \otimes \pi_V)\widehat{\Delta}.$$
    Hence, the triple $(F_\alpha, F_\alpha^1, F_\alpha^2)$ forms a unitary tensor functor; we will simply refer to $F_\alpha$ as the unitary tensor functor, with the other two maps being implicit.
    
    Since $(X_U)_A \cong A_A^{\oplus n_U},$ we see that each $X_U$ is a finitely generated projective right $A$-module.  
    Moreover, because $\mathsf{Rep}(\G)$ is rigid and $F_\alpha$ is a unitary tensor functor, every $F_\alpha(U) = X_U$ is dualizable; it follows that $X_U \in \mathsf{Bim_{f}}(A)$ for each $U \in \mathsf{Rep}(\G)$.
    In other words, we have constructed a unitary tensor functor \[F_\alpha: \mathsf{Rep}(\G) \to \mathsf{Bim_{f}}(A)\] which encodes the same information as the discrete quantum group action ${\alpha: \widehat{\G} \curvearrowright A}$.
    
    We also know, by Tannaka-Krein duality, that $\widehat{\G}$ is encoded by the \emph{connected quantum group C*-algebra object} $\mathbf{G} \in \mathsf{Vec}(\mathsf{Rep}(\G))$ (see e.g., \cite[Example 16]{MR3687214}).
    In particular, following \cite[Sub-Example 2]{MR3687214}, the forgetful fiber functor $\mathsf{Rep}(\G)\to \mathsf{Hilb}_{\mathsf{f}},$ equips $\mathsf{Hilb_f}$ with the structure of a left $\mathsf{Rep(\G)}$-module category via \[U \rhd K = H_U \otimes K. \]
    Then $\mathbf{G}$ is the connected C*-algebra object corresponding to $\mathbb{C} \in \mathsf{Hilb_f}$ with \[ \mathbf{G}(U) = \mathsf{Hilb_f}(U \rhd \mathbb{C}, \mathbb{C}) = \mathsf{Hilb_f}(H_U \otimes\mathbb{C}, \mathbb{C}) = \mathsf{Hilb_f}(H_U, \mathbb{C}) = H_U^*,\] as established in \cite[p.1184]{MR3687214}.
    The multiplication and involution in the algebraic realization of $\mathbf{G}$ recover those of the Hopf $\ast$-algebra 
    \[\mathcal{O}(\G) := \bigoplus^{\text{alg}}_{u \in \mathsf{Irr}(\G)} \mathcal{C}(u) \] 
    where 
    \begin{align*} \mathcal{C}(u) := \operatorname{span}\left\{c^u_{\varphi, \xi} := (\varphi \otimes \operatorname{id}_{C(\G)})(u(\xi \otimes 1)): \varphi \in H_u^*,\, \xi \in H_u\right\}.
    \end{align*}
    This Hopf $\ast$-algebra is a dense $\ast$-subalgebra of $C(\G)$.
    Moreover, since $\G$ is reduced,  $C(\G)$ is uniquely determined in the GNS completion of $\mathcal{O}(\G)$ with respect to the Haar state.

    We now define the algebraic discrete quantum group crossed product 
    \begin{align*}
        B_{\text{alg}}:=A \rtimes_{\text{alg}, \alpha} \widehat{\G} &= \operatorname{span}\{(x \otimes 1)\alpha(a): x \in \mathcal{O}(\G), a \in A\} \\
        &= \bigoplus_{u \in \mathsf{Irr}(\G)}^{\text{alg}} (\mathcal{C}(u) \otimes 1)\alpha(A)
    \end{align*} and compare it with the algebraic generalized crossed product of $A$ and $\mathbf{G}$ (via the unitary tensor functor $F_\alpha$) described in \cite[Construction 4.4]{MR5051789},
    \begin{align*}
        \mathbf{B}_{\text{alg}} := A \rtimes_{\text{alg}, F_\alpha} \mathbf{G} = \bigoplus_{u \in \mathsf{Irr}(\G)}^{\text{alg}} F_\alpha(u) \otimes \mathbf{G}(u) 
        &= \bigoplus_{u \in \mathsf{Irr}(\G)}^{\text{alg}}X_u\otimes H_u^* \\
        &= \bigoplus_{u \in \mathsf{Irr}(\G)}^{\text{alg}}{}_{\alpha_u}{(H_u \otimes A)}_A\otimes H_u^*.
    \end{align*} 
    That $B_{\text{alg}}$ and $\mathbf{B}_{\text{alg}}$ are isomorphic as vector spaces is essentially immediate from the definitions and the \emph{Peter-Weyl decomposition} of $\mathcal{O}(\G)$, which says that $\mathcal{C}(u) \cong H_u^* \otimes H_u$ for each $u \in \mathsf{Irr}(\G).$
    One can then verify that the map $\Theta:   \mathbf{B}_\mathrm{alg} \to B_\mathrm{alg} $ defined by \[ (\xi \otimes a)\otimes \varphi \mapsto (c^u_{\varphi, \xi} \otimes 1)\alpha(a) \] preserves the multiplication and involution described in \cite[Construction 4.4]{MR5051789}. 
    The restriction of $E$ to $B_{\text{alg}}$ is the algebraic conditional expectation $E_0: B_{\text{alg}} \twoheadrightarrow A$  given by $$E_0((c^u_{\varphi, \xi} \otimes 1)\alpha(a)) = h(c^u_{\varphi, \xi})a$$ where $c^u_{\varphi, \xi} \in \mathcal{C}(u)$ for some $u \in \mathsf{Irr}(\G)$, $a \in A$, and where $h$ is the Haar state; it is a projection onto the $u = \mathbbm{1}$ component. 
    On the categorical side, the conditional expectation $E_0': \mathbf{B}_{\text{alg}} \twoheadrightarrow A$ is defined to be the projection onto the $u = \mathbbm{1}$ component, so we get $E_0' = E_0\circ \Theta.$

    Following \cite[Construction 4.4]{MR5051789}, we can build a unital C*-algebra ${A \rtimes_{r, F_\alpha} \mathbf{G}}$ from our unital, simple C*-algebra $A$, unitary tensor functor ${F_\alpha: \mathsf{Rep}(\G) \to \mathsf{Bim_f}(A)}$, and C*-algebra object $\mathbf{G} \in \mathsf{Vec}(\mathsf{Rep}(\G))$.  
    The key idea for us is that this reduced crossed product contains the same information as $A \rtimes_{r, \alpha} \widehat{\G}.$ 
    In particular, the algebraic $\ast$-isomorphism above gives rise to a C*-algebraic $\ast$-isomorphism as follows.
    The generalized GNS completions $$\mathcal{E}' := L^2(\mathbf{B}_\mathrm{alg}, E'_0) \quad \text{ and } \quad \mathcal{E} := L^2(B_\mathrm{alg}, E_0)$$ can be identified by defining a map $W_0: \mathcal{E}' \to \mathcal{E}$ by $$z \Omega' \mapsto \Theta(z) \Omega$$ and extending to a unitary $W: \mathcal{E}' \to \mathcal{E}$. This unitary intertwines the respective left regular representations, so $\Theta$ extends to an expectation-preserving $\ast$-isomorphism at the C*-algebra level, as required.

    Finally, if $F_\alpha$ is fully faithful then \cite[Theorem 4.9]{MR5051789} states that \[A \overset{E'}\subseteq A \rtimes_{r, F_{\alpha}} \mathbf{G}\] is an irreducible C*-discrete inclusion, where $E': A\rtimes_{{r, F_\alpha}} \mathbf{G} \twoheadrightarrow A$ is a faithful conditional expectation which extends $E'_0$ \cite[Proposition 4.7]{MR5051789}.
    By above, this is equivalent to our claim that if the action $\alpha: \widehat{\G} \curvearrowright A$ is outer, then \[ A \overset{E}\subseteq A \rtimes_{r, \alpha} \widehat{\G}\] is an irreducible C*-discrete inclusion.   

\bibliographystyle{amsalpha}
\bibliography{bibliography}
\end{document}